\documentclass[USenglish]{article}
\usepackage[utf8]{inputenc}

\usepackage{amssymb,amsmath,amsthm,amscd,epsf,latexsym,verbatim,graphicx,amsfonts,hyperref,epstopdf,color}
\input epsf.tex
\usepackage{graphicx}
\usepackage{palatino, url, multicol}
\usepackage{subfig}
\usepackage{geometry}                
\geometry{letterpaper}                   
\DeclareGraphicsRule{.tif}{png}{.png}{`convert #1 `dirname #1`/`basename #1 .tif`.png}


\theoremstyle{theorem}
\newtheorem{theorem}{Theorem}[section]
\newtheorem{conjecture}[theorem]{Conjecture}
\newtheorem{lemma}[theorem]{Lemma}
\newtheorem{proposition}[theorem]{Proposition}
\newtheorem{corollary}[theorem]{Corollary}

\theoremstyle{definition}

\theoremstyle{definition}

\theoremstyle{definition}
\newtheorem{remark}[theorem]{Remark}
\theoremstyle{definition}
\newtheorem{example}[theorem]{Example}
\theoremstyle{definition}
\newtheorem{definition}[theorem]{Definition}
\theoremstyle{definition}

\theoremstyle{definition}




\setlength{\textwidth}{6.0truein}
\setlength{\textheight}{9.0truein}
\setlength{\voffset}{-0.3truein}
\setlength{\oddsidemargin}{0.5truein}
\setlength{\evensidemargin}{0truein}

\setcounter{totalnumber}{50}
\setcounter{topnumber}{50}
\setcounter{bottomnumber}{50}

\captionsetup{width=0.8\textwidth} 

\title{Negative refraction and tiling billiards}
\author{Diana Davis, Kelsey DiPietro, Jenny Rustad, Alexander St Laurent}

\begin{document}
\maketitle


\begin{abstract}
We introduce a new dynamical system that we call \emph{tiling billiards}, where trajectories refract through planar tilings. This system is motivated by a recent discovery of physical substances with negative indices of refraction. We investigate several special cases where the planar tiling is created by dividing the plane by lines, and we describe the results of computer experiments.
\end{abstract}

\section{Introduction}


\subsection{Negative index of refraction in physics}

When light travels from one medium to another, the ray of light may bend; this bending is known as \emph{refraction}. Snell's Law\footnote{Willebrord Snell stated this result in 1621, but it was known in 984 to Islamic scholar Abu Said al-Ala Ibn Sahl \cite{kwan}.} describes the change in angle of the ray, relating it to the indices of refraction of the two media. If $\theta_1$ and $\theta_2$ are the angles between the ray and the normal to the boundary in each medium, and $k_1$ and $k_2$ are the indices of refraction of the two media,

$$\frac{\sin\theta_1}{\sin\theta_2}=\frac{k_2}{k_1}.$$ This ratio is called the \emph{refraction coefficient} or the \emph{Snell index}.

In nature, the indices of refraction of all materials are positive. 
Recently, however, physicists have created materials that have negative indices of refraction, called "metamaterials." Articles describing this discovery were published in \emph{Science} \cite{shelby},\cite{smith}, and have been cited thousands of times in the 10 years since.

Even with standard materials with positive indices of refraction, it is possible to do counterintuitive things like make glass ``disappear'' by immersing it in clear oil. 
Since metamaterials are not found in nature, we have even less intuition for their effects.
When light crosses a boundary between a medium with a positive index of refraction and one with a negative index of refraction, the light  bends in the opposite direction as it would if both media had positive indices of refraction.
These new materials may someday be used to create an invisibility shield, improve solar panel technology, or fabricate a perfect lens that would be able to resolve details even smaller than the wavelengths of light used to create the image, see \cite{physicists}.

 The behavior in complex scenarios, such as the propagation of light through materials that alternate between standard materials and metamaterials, is complex and interesting. Motivated by this new discovery in physics, we introduce and study a new billard-type dynamical system. 

\subsection{Tiling billiards}

We formulate a model for light passing between two media with refraction coefficient $-1$, which we call \emph{tiling billiards}, defined as follows:
\begin{enumerate}
\item We consider a partition of the plane into regions (a \emph{tiling}). 
\item We give a refraction rule for a ray of light passing through the tiling: When a ray hits a boundary, it is reflected across a tangent line to the boundary (or across an edge of a straight boundary). If the ray hits a corner, the subsequent trajectory is undefined.
\end{enumerate}

An example of a tiling billiards trajectory is in Figure \ref{tilebill}.

\begin{figure} [!h]
\centering 
\includegraphics[height=90pt]{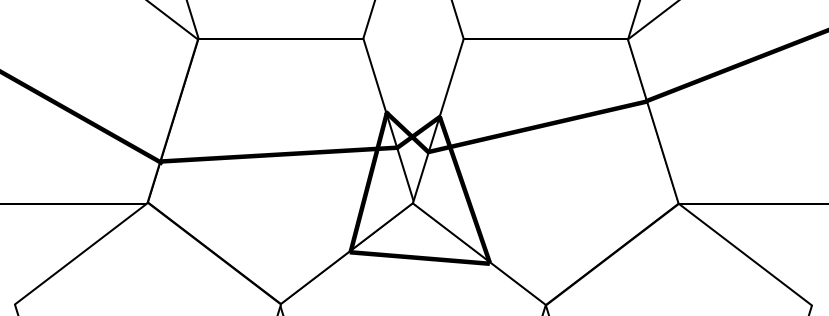}
\caption{A tiling billiards trajectory on a planar tiling of pentagons and rhombi}
\label{tilebill}
\end{figure}

To relate this model to the physical system with standard materials and metamaterials, we consider a two-colorable tilling of the plane, where one color corresponds to a material with index of refraction $k$, and the other color to a material with index $-k$. However, the dynamical system described above need not obey this restriction of two-colorability.

Note that this problem is $1$-dimensional, in the sense that the width of a parallel beam of light is preserved.

\subsection{Dynamics of inner and outer billiards} 

Although the tiling billiards system is new, it has many similarities to the well-studied dynamical systems of inner and outer billiards. In the \emph{inner billiards} dynamical system, we consider a billiard table with a ball bouncing around inside, where the angle of incidence equals the angle of reflection at each bounce. In the \emph{outer billiards} dynamical system, the ball is outside the table, and reflects along a tangent line to the table (or through a vertex of a polygonal table), so that the original distance from the ball to the reflection point is equal to the distance from the reflection point to the image, see \cite{sergei}. In tiling billiards, the ray is reflected across the boundary. Examples of inner and outer billiards are in Figure \ref{three-billiards}.

\begin{figure} [!h]
\centering 
\includegraphics[height=98pt]{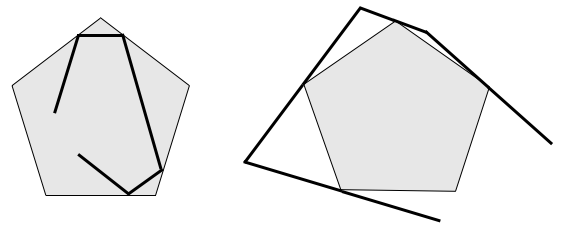} 
\caption{Part of (a) an inner billiards trajectory on a regular pentagon, (b) an outer billiards trajectory on a regular pentagon.}
\label{three-billiards}
\end{figure}

For any billiard system, there are several basic questions to ask. We outline these below, with some answers based on  results from the established literature on inner and outer billiards.

\vspace{1em}\noindent\emph{Are there periodic trajectories?}

In inner billiards on a rational polygon (a polygonal table where every angle is a rational multiple of $\pi$), there are always periodic paths $-$ in fact, Masur \cite{masur} showed that periodic directions are dense. For general polygons, it is not known whether there is always a periodic path, but Schwartz \cite{schwartz} showed that for in inner billiards on a \emph{triangle} whose largest angle is less than $100^{\circ}$, there is always a periodic trajectory. 

Every polygonal outer billiard has a periodic orbit, and for outer billiards on a polygon whose vertices are at lattice points, all of the orbits are periodic, see \cite{kites}, \cite{tabachni} and \cite{vivaldi}.

\vspace{1em}\noindent\emph{Can trajectories escape? }  

For a finite billiard table, an inner billiard trajectory cannot escape.

After the outer billiards system was introduced by B.H. Neumann, it stood as an open question for decades whether there are billiard tables with unbounded orbits, with many negative answers for special cases. Finally, Schwartz \cite{kites} showed that the Penrose kite has unbounded orbits, and Dolgopyat and Fayad \cite{semicircle} showed the same for the half disk.

\vspace{1em}\noindent\emph{Can a trajectory fill a region densely? }
 
In inner billiards on a square table, a trajectory with an irrational slope is dense. It is also possible to construct a table so that some trajectory on that table is dense in one part of the table, but never visits another part of the table at all. McMullen constructed an L-shaped example of this phenomenon \cite{mcmullen}, Galperin constructed examples on polygons with $4$ or more sides \cite{galperin}, and recently Tokarsky asserted that Galperin's triangle construction is flawed, so  the problem of constructing a not-everywhere-dense non-periodic billiard orbit on a triangle is still open \cite{tokarsky}.
 
For outer billiards, Genin \cite{genin} showed that for certain trapezoids, there are trajectories that are dense in  open regions of the plane. 

\vspace{1em}\noindent\emph{ Is the behavior stable under small perturbations of the starting point and direction of the trajectory, and under small perturbations of the billiard table?}

In inner billiards on a square table, the behavior of the trajectory depends on the direction: directions with rational slope yield periodic trajectories, and with irrational slope yield dense trajectories, so a small perturbation changes everything. Similarly, a small perturbation of the polygon taking the square to a nearby quadrilateral with different angles yields very different behavior of the trajectory.  So inner billiards are generally highly sensitive to the starting conditions, and in this sense are not stable. However, if the angles of the polygon are independent over the rationals, then every periodic trajectory is stable, see \cite{billiards}. 

Outer billiards are also unstable under perturbation of the table; for instance, outer billiards on a kite has an unbounded orbit if and only if the kite is irrational, see \cite{kites}. In contrast, outer billiards are quite stable under perturbation of the trajectory direction: if one point yields a periodic trajectory, then that point lies in a polygonal region of points that do the same thing. 

\vspace{1em}\noindent\emph{ Is the behavior invariant under affine transformations?}

Because the inner billiards system uses angles, the behavior is highly sensitive to affine transformations; inner billiards on a rectangular table is equivalent to that of a square table, but inner billiards on a general parallelogram is completely different.

Because the outer billiards system uses ratios of the lengths of collinear segments, the behavior is invariant under affine transformations. So for example, the dynamics on any parallelogram are the same as the dynamics on a square, and the dynamics on every triangle are the same. 

\subsection{Dynamics of tiling billiards}
Tiling billiards is a cross between the inner and outer billiards systems, in the sense that in inner billiards, the ray is reflected across the normal to the boundary; in outer billiards, the ray itself forms a tangent to the boundary and the point is reflected through the point of tangency; and in tiling billiards, the ray is reflected across the tangent to the boundary (or the boundary itself if it is polygonal). An example of tiling billiards is in Figure \ref{penrose}. 

\begin{figure} [!h]
\centering 
\includegraphics[height=70pt]{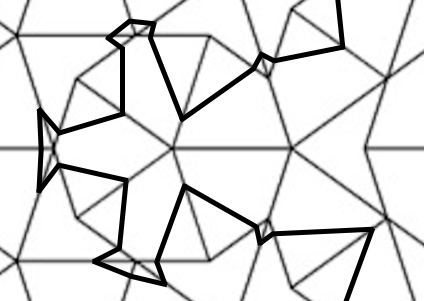} 
\caption{A tiling billiards trajectory (thick) on the Penrose tiling (thin).}
\label{penrose}
\end{figure}

Much of the existing literature on billiards focuses on polygonal billiards. For the tiling billiards system, we study analogous tilings of the plane. In this paper, we study the case where the plane is divided by lines into regions, possibly of infinite area. Such a tiling is always two-colorable (required for the physical application discussed at the beginning), because finitely many lines intersect at any point, and every vertex has an even valence, so the associated adjacency graph of the regions contains no odd cycles, and is thus bipartite. Within this case, we study the three special cases where the plane is divided by a finite number of non-parallel lines (a \emph{line arrangement}), where the resulting tiling is of congruent triangles with $6$ meeting at each vertex (a \emph{triangle tiling}), and where the resulting tiling is of alternating regular hexagons and triangles (the \emph{trihexagonal} tiling).

Based on our results, we outline the answers to the same set of basic questions, now considering tiling billiards. 

\vspace{1em}\noindent\emph{Are there periodic trajectories?}

We find periodic trajectories in every class of tiling billiards we study. For example, every triangle tiling has a periodic trajectory of period $6$ around a vertex (Corollary \ref{all-triangles}), and almost all have a period-$10$ trajectory around 2 vertices (Theorem \ref{per10}). We give examples of periodic trajectories in the trihexagonal tiling (Examples \ref{6theorem}, \ref{12periodic} and \ref{gooseheadtheorem}) with period $6,12$ and $24$, respectively.

\vspace{1em}\noindent\emph{Can trajectories escape?}  

We  study arrangements of finitely many lines in the plane, all of which trivially have escaping trajectories that reflect only once, and some of which have trajectories that escape by spiraling outward (Proposition \ref{periodic_unstable}).  

We also show that every right triangle tiling has an unbounded trajectory (Theorem \ref{bisecting}). In tiling billiards on tilings with translational symmetry, we frequently have a type of very regular escaping trajectory, which we call a \emph{drift-periodic} trajectory: it is not periodic, but is periodic up to translation, like a staircase or frieze pattern, escaping to infinity in two opposing directions. We show that some right triangle tilings have drift-periodic trajectories (Theorem \ref{dper}), and based on  computer experiments we conjecture that they all do (Conjecture \ref{everyrightdp}). 

We prove the existence of several families of drift-periodic trajectories on the trihexagonal tiling (Propositions \ref{drift120} and \ref{turning_drift}), and based on  computer experiments we conjecture that there are non-periodic escaping trajectories as well  (Conjecture \ref{nonperesc}). 

\vspace{1em}\noindent\emph{Can a trajectory fill a region densely? }

We show that the trihexagonal tiling has trajectories that fill regions of the plane with equally-spaced parallel lines that are arbitrarily close together (Theorem \ref{dense}), by construction. However, in the limit such a trajectory is periodic, not dense. Still, we conjecture (Conjecture \ref{denseexists}) that dense trajectories do exist. In contrast, triangle tilings appear not to have any dense trajectories (Conjecture \ref{trinodense}).

\vspace{1em}\noindent\emph{ Is the behavior stable under small perturbations of the starting point and direction of the trajectory, and under small perturbations of the billiard table?}

The most surprising observation that we made relates to this question: We found that the trihexagonal tiling exhibits very unstable behavior, in that a slight perturbation of the direction wildly changes the trajectory. For example, in trying to find periodic trajectories with our computer program, we had to be extremely precise with the mouse and a tiny change would lead to a trajectory that escaped to infinity.\footnote{A version of our program is available online at \url{http://awstlaur.github.io/negsnel/}.} In contrast, triangle tilings have extremely stable behavior; changing the direction, or starting location, or angles and lengths of the tiling triangle, only superficially changes the trajectory. For example, at one point we conjectured that some behavior does not depend on the starting position, because even a vast difference in moving the mouse seemed to have no effect on the trajectory. We had trouble proving it, and found that when we ``zoomed in'' many times in our program, the starting conditions did actually have a tiny effect. In this sense, triangle tilings are similar to outer billiards in their stability, and the trihexagonal tiling is similar to inner billiards in its instability. 

\vspace{1em}\noindent\emph{ Is the behavior invariant under affine transformations?}

The dynamics of the tiling billiards system is not invariant under affine transformations, because like the inner billiards system, it uses angles. For example, all triangle tilings are affinely equivalent, but different triangle tilings exhibit very different behavior: every right triangle tiling has an escaping trajectory (Theorem \ref{bisecting}), but the equilateral triangle tiling has only periodic trajectories of period $6$ around the vertices, and no escaping trajectories (Theorem \ref{mf}).\\

In this paper, we have results on periodic, drift-periodic and escaping trajectories. We have observations and conjectures about the existence of periodic trajectories on large classes of tilings, about the existence of dense trajectories, about  trajectories' stability, and about other basic questions about this dynamical system, but exploration of the system remains almost completely open.

\section{Results} 
\subsection{Previous results on tiling billiards}


Mascarenhas and Fluegel \cite{bloch} asserted that every trajectory in the  equilateral triangle tiling, and in the  $30^{\circ}$-$60^{\circ}$-$90^{\circ}$ triangle tiling, is periodic, and that every trajectory in the  square tiling is either periodic or drift-periodic. All of these tilings, and example trajectories, are in Figure \ref{regtilings}.

Engelman and Kimball \cite{2012} proved that there is always a periodic trajectory about the intersection of three lines (see Figure \ref{keresults}), and that there is no periodic orbit about the intersection of two non-perpendicular lines. In Section \ref{alex}, we generalize these results to certain divisions of the plane by any finite number of non-parallel lines.


Because neither \cite{bloch} nor \cite{2012} is in the published literature, we include these results and their proofs here, for completeness. We also add the regular hexagon tiling, so that we have all three tilings by regular polygons (even though the hexagon tiling is not two-colorable).

\begin{theorem}[Mascarenhas and Fluegel] \label{mf}
 Every trajectory is periodic in the equilateral triangle tiling and in the $30^{\circ}$-$60^{\circ}$-$90^{\circ}$ triangle tiling. Every trajectory in the square tiling and in the regular hexagon tiling is either periodic or drift-periodic.
\end{theorem}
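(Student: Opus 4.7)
The plan is to use a single geometric invariant throughout. First I would prove the following \emph{distance-preservation lemma}: when the ray crosses an edge $e$, the perpendicular distance from each endpoint of $e$ to the ray is preserved. This is immediate from the reflection rule (which fixes the endpoints of $e$ pointwise) together with the fact that reflection across a line through a point preserves distances from that point.

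For the equilateral and $30^\circ$-$60^\circ$-$90^\circ$ tilings, the argument exploits that any two edges of a triangle share a vertex. In each triangle traversed by the trajectory, let $V$ be the common vertex of the entry and exit edges; by the lemma the perpendicular distance $d$ from $V$ to the ray is preserved throughout the traversal and across the exit crossing. The key step -- which I expect to be the main obstacle -- is showing that in the next triangle the ray again exits through an edge incident to $V$ (rather than through the edge opposite $V$), so that $V$ is preserved as the ``center'' of circling. I would verify this by placing coordinates at $V$, writing out the reflection rule explicitly, and checking that the direction inherited from the previous triangle forces the exit through a $V$-edge. Once established, the ray circles $V$ through the finitely many triangles incident to $V$ ($6$ for the equilateral tiling; $12$, $6$, or $4$ at the three vertex types of the $30^\circ$-$60^\circ$-$90^\circ$ tiling). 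The composition of the successive reflections at $V$ is the identity, since consecutive reflection pairs are rotations about $V$ whose angles sum to $2\pi$, so the trajectory returns to its initial position and direction, giving periodicity.

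For the square and regular hexagon tilings, two edges of a single tile need not share a vertex, so the circling argument does not always apply. I would split into cases based on whether the entry and exit edges in each tile share a common vertex. In the affirmative case, the circling argument yields periodicity (period $4$ for the square and period $6$ for the hexagon). In the opposite case, the trajectory passes through non-adjacent edges of at least one tile; here I would use the finiteness of the direction orbit under the reflections generated by the edge orientations (at most $4$ values for the square and $6$ for the hexagon), combined with the translation symmetry of the tiling, to deduce that after a bounded number of crossings the direction returns and the position has shifted by a fixed vector -- giving drift-periodicity.

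The main technical obstacle is the ``next exit edge'' claim: for the triangle tilings, that the ray's next exit is through a $V$-incident edge; and for the square and hexagon, the combinatorial task of enumerating the possible patterns of non-adjacent edge crossings and verifying closure. These are concrete angle/coordinate calculations that must be carried out for each vertex type and each direction regime.
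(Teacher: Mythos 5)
Your route is genuinely different from the paper's. The paper's entire proof rests on one observation you never make explicit: each of these four tilings is reflection-symmetric across the line containing every edge, so when a trajectory crosses an edge, its continuation is the mirror image of the part already traversed. That single fact disposes of what you correctly identify as your main obstacle --- the claim that the next exit edge is again incident to $V$ --- with no coordinates at all: the new segment is the reflection of the old one across the crossed line, so it terminates on the reflection of the previous entry edge, which is another edge at $V$. It likewise gives closure around the vertex, and the zig-zag in the square tiling, in one stroke (the paper also records a ``folding'' version of the same idea). Your substitute machinery --- the endpoint-distance invariant plus the fact that the composition of reflections in the lines through $V$, taken in crossing order, is a rotation by $2\pi$ --- is sound, and is essentially what the paper uses later for general line arrangements (Lemma \ref{classify}, Theorem \ref{ek}, Corollary \ref{all-triangles}); it buys generality at the cost of the case-by-case verifications you defer.

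Two of those deferred steps are genuine gaps rather than routine checks. First, the hexagon case analysis: a trajectory can exit a regular hexagon through the adjacent, the next-to-adjacent, or the opposite edge. The next-to-adjacent case produces a \emph{periodic} trajectory of period $6$, but the six edges it crosses are not all incident to one vertex (their lines are concurrent at the center of a hexagon, which is not an endpoint of any crossed edge), so it falls into neither branch of your dichotomy: the shared-vertex circling argument does not apply, and the conclusion ``drift-periodic'' is only correct if you allow the drift vector to be zero and actually prove that it is zero here. Second, your argument for drift-periodicity is incomplete as stated: finiteness of the direction orbit together with translation symmetry shows that the direction and the combinatorial type of the crossed edge recur, but drift-periodicity also requires the crossing point to recur at the \emph{corresponding position along the edge}, and nothing in your sketch controls that coordinate. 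You need either an explicit position computation (for the square zig-zag, the abscissa returns exactly after two crossings) or the mirror-symmetry argument above, which gives it for free.
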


\begin{figure} [!h]
\centering 
\includegraphics[height=80pt]{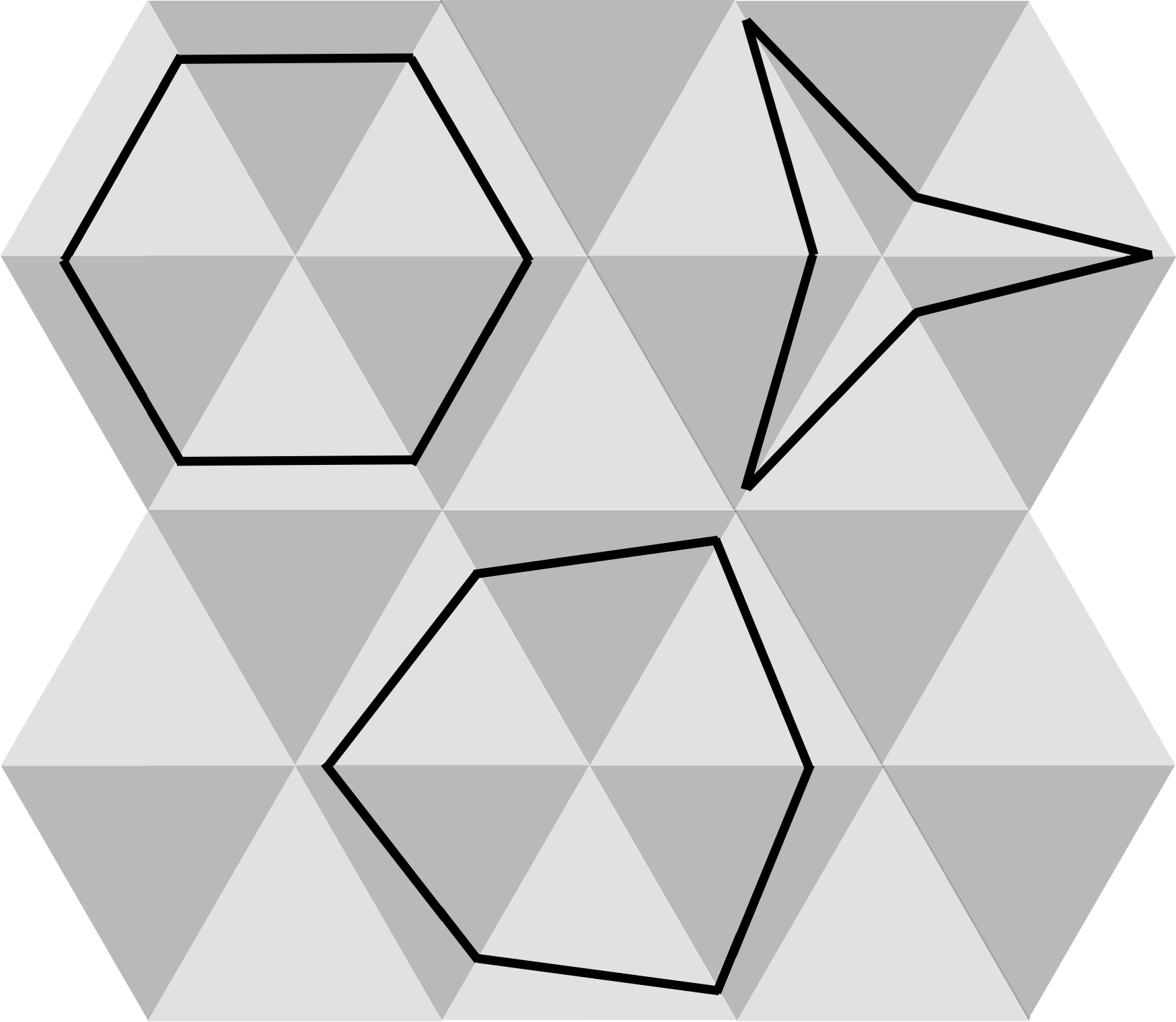} \ \ \ \
\includegraphics[height=80pt]{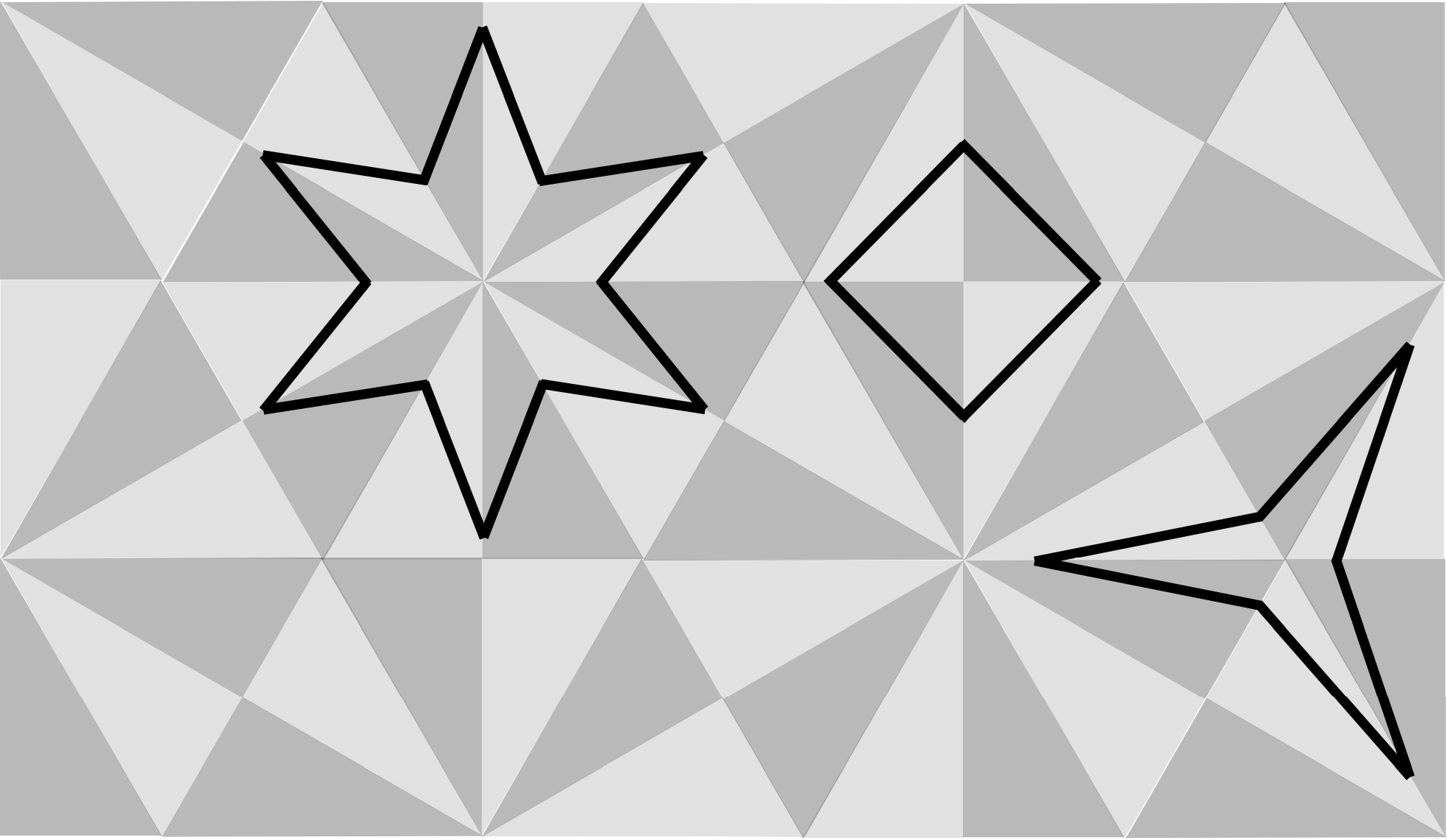} \\ 
\includegraphics[height=90pt]{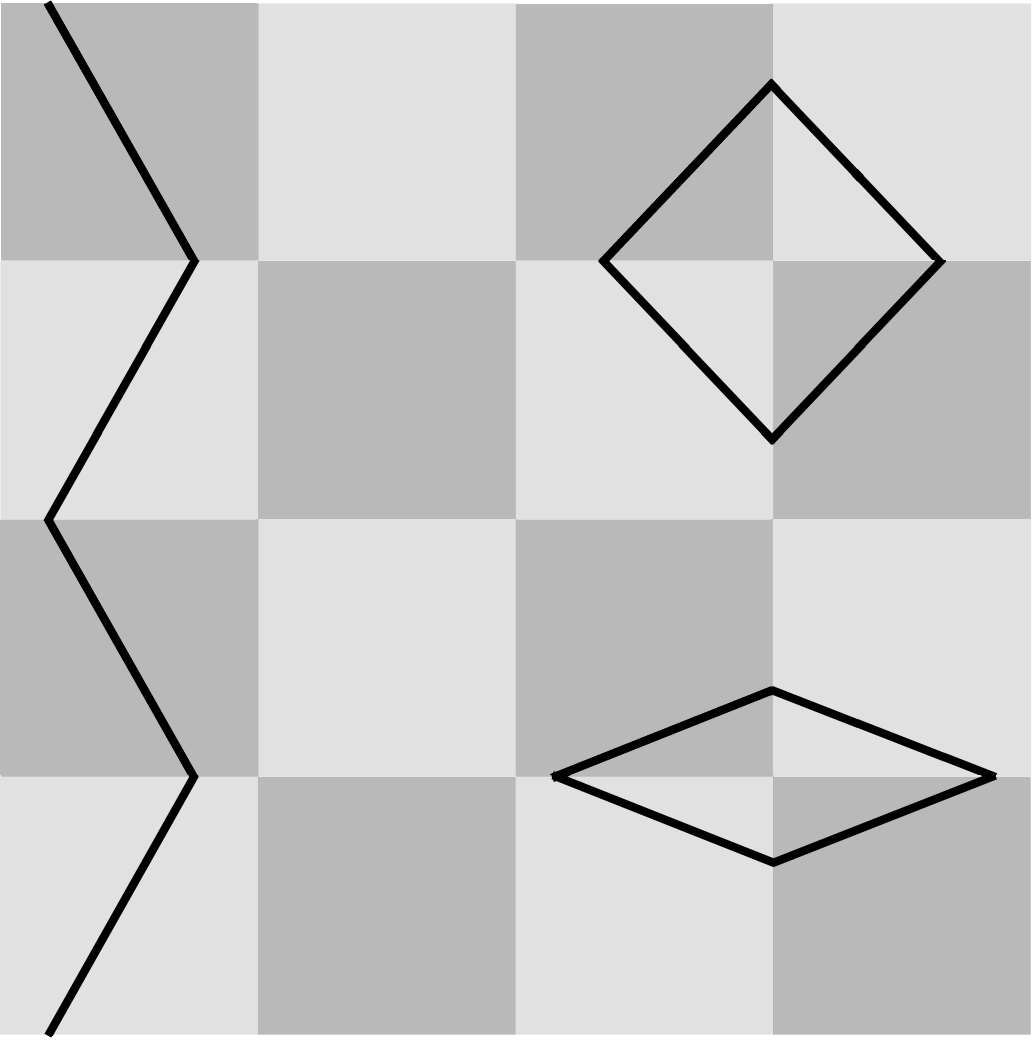} \ \  \ \ \ \ 
\includegraphics[height=90pt]{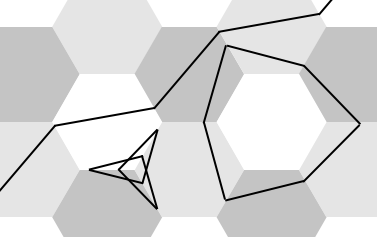}
\caption{Trajectories on the (a) equilateral triangle, (b) $30^{\circ}$-$60^{\circ}$-$90^{\circ}$ triangle, (c) square, and (d) regular hexagon tilings. Billiards in these four basic tilings have very simple dynamics}
\label{regtilings}
\end{figure}

\begin{proof}
Each of these tilings is reflection-symmetric across every edge of the tiling. (Note that there are many tilings of $30^{\circ}$-$60^{\circ}$-$90^{\circ}$ triangles; Mascarenhas and Fluegel chose the one pictured in Figure \ref{regtilings}b.) Thus, every trajectory that crosses a given edge is also reflection-symmetric across that edge. In the triangle tilings, a trajectory going from one edge of a triangle to another always crosses an adjacent edge, so  under the reflectional symmetry it closes up into a periodic path with the valence of the vertex between the adjacent edges, which is always $6$ in the equilateral triangle tiling, and is $4$, $6$ or $12$ in the $30^{\circ}$-$60^{\circ}$-$90^{\circ}$ triangle tiling tiling (Figure \ref{regtilings} (a) and (b)).

In the square tiling, a trajectory that starts on one edge either crosses the adjacent edge or the opposite edge. If it crosses the adjacent edge, it circles around the vertex between those edges, so it closes up into a periodic path with the valence of the vertex: $4$. If it crosses the opposite edge, it ``zig-zags'' in a drift-periodic path of period 2 (Figure \ref{regtilings} (c)). Similarly, in the regular hexagon tiling, a trajectory crosses the adjacent edge, the next-to-adjacent edge, or the opposite edge. If it crosses the adjacent edge or next-to-adjacent edge, it closes up into a periodic path of period $6$, and if it crosses the opposite edge, it zig-zags as in the square tiling (Figure \ref{regtilings} (d)).
\end{proof}

There is an alternative proof of Theorem \ref{mf} using ``folding.'' In inner billiards, a common technique is to ``unfold'' the trajectory across the edges of the table, creating a new copy of the table in which the trajectory goes straight. Analogously but going the opposite direction, in tiling billiards we can ``fold up'' the tiling so that congruent tiles are on top of each other, and the trajectory goes back and forth as a line segment between edges of a single tile (Figure \ref{folding}).

\begin{figure}[!h] 
\centering 
\includegraphics[height=100pt]{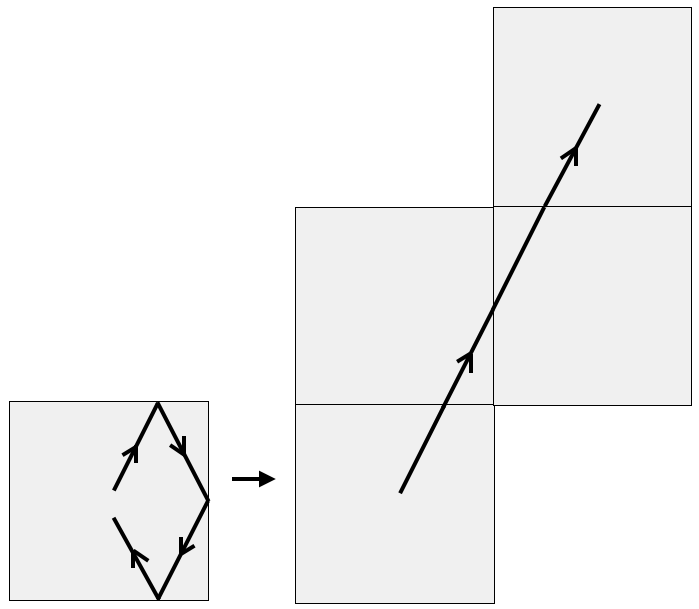}  \ \ \ \ \ \ \ \ \ \ \  \ \ \ \ \ \ 
\includegraphics[height=67pt]{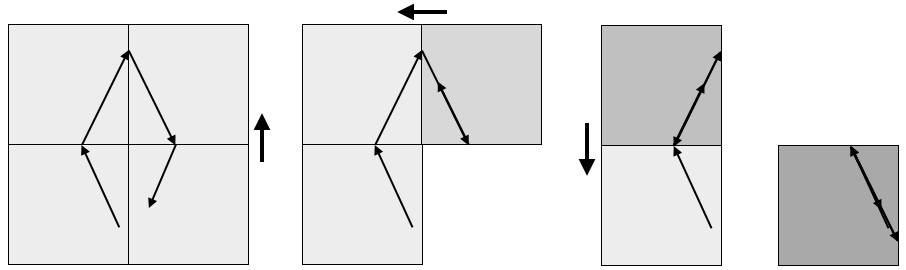} 
\caption{(a) Unfolding a trajectory on the square billiard table into a straight line (b) Folding a trajectory on the square tiling into a segment}
\label{folding}
\end{figure}

\begin{proof}[Alternative proof of Theorem \ref{mf} using folding]
Each of the four tilings is reflection-symmetric across every edge of the tiling, so when a trajectory crosses an edge from one tile to another, that edge is a line of symmetry for the two adjacent tiles, and we fold one tile onto the other. When we fold up a trajectory this way, it is a line segment going back and forth between two edges of the tile, so every trajectory is periodic or drift-periodic. For a periodic trajectory, the period is the number of copies needed for the trajectory to return to the same edge in the same place, which for a vertex of valence $v$ is $v$ when $v$ is even and $2v$ when $v$ is odd. For a drift-periodic trajectory, the period is the number of copies needed for the trajectory to return to a corresponding edge in the same place, which in both the square and hexagon tiling is $2$.
\end{proof}

We will use the same technique to prove the Trajectory Turner Lemma \ref{turner}.

\begin{theorem}[Engelman and Kimball] \label{ek}
(a) There is a periodic trajectory about the intersection of two lines if and only if the lines are perpendicular. (b) There is always a periodic trajectory about the intersection of three lines.
\end{theorem}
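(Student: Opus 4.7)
The plan is to parametrize a putative periodic trajectory about $O$ by the unsigned angle $a_i$ of its $i$th boundary-crossing, and to exploit the reflection rule via angle-chasing in the triangles cut off by the segments together with $O$.

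For part (a), suppose two lines meet at $O$ with angle $\alpha$ and that a periodic trajectory winds $k\ge 1$ times around $O$; label its $4k$ successive crossing vertices $V_1,\dots,V_{4k}$ with $V_{4k+1}:=V_1$. Because tiling billiards reflects the direction across the boundary, the incoming and outgoing segments at $V_i$ make equal angles with the line at $V_i$; since $V_iO$ lies along that line, this common angle is the interior angle of triangle $OV_iV_{i+1}$ at $V_i$ and also the interior angle of triangle $OV_{i-1}V_i$ at $V_i$, so the label $a_i$ is unambiguous. The interior angle at $O$ in triangle $OV_iV_{i+1}$ is the width $\delta_i$ of the sector traversed by the $i$th segment, so the angle-sum identity gives
\[
a_i + a_{i+1} = \pi - \delta_i, \qquad i = 1,\dots,4k \text{ (indices mod } 4k).
\]
As the trajectory winds around $O$, the sector widths cycle through $\alpha,\pi-\alpha,\alpha,\pi-\alpha,\dots$; summing the $2k$ equations with $\delta_i=\alpha$ yields right-hand side $2k(\pi-\alpha)$ and left-hand side $\sum_j a_j$ (consecutive pairs exhaust each $a_j$ exactly once), while summing the $2k$ equations with $\delta_i=\pi-\alpha$ yields right-hand side $2k\alpha$ and again left-hand side $\sum_j a_j$. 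Equating forces $\alpha=\pi/2$. Conversely, for perpendicular lines the square with vertices $(r,0),(0,r),(-r,0),(0,-r)$ is a period-$4$ trajectory, as a direct reflection check shows.

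For part (b), three lines through $O$ cut the plane into six sectors, and the $\pi$-rotation invariance of any configuration of lines through $O$ forces $\delta_{i+3}=\delta_i$ and $\delta_1+\delta_2+\delta_3=\pi$. The same system
\[
a_i + a_{i+1} = \pi - \delta_i, \qquad i=1,\dots,6 \text{ (indices mod } 6)
\]
is consistent: its alternating-sum obstruction $\delta_1-\delta_2+\delta_3-\delta_4+\delta_5-\delta_6$ vanishes automatically by the opposite-sector symmetry. One verifies that $a_1=\delta_2,\ a_2=\delta_3,\ a_3=\delta_1,\ a_{i+3}=a_i$ solves every equation using $\delta_1+\delta_2+\delta_3=\pi$. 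Each $a_i$ lies strictly between $0$ and $\pi-\delta_i$, so the triangles are non-degenerate; sine-rule telescoping $|OV_{i+1}|/|OV_i|=\sin a_i/\sin a_{i+1}$ around the hexagon gives $|OV_7|=|OV_1|$ automatically because $a_7=a_1$, so the six triangles assemble into a genuine periodic hexagonal trajectory, invariant under the $\pi$-rotation.

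The main obstacle I anticipate is the bookkeeping at the start of part (a): verifying carefully that $a_i$ really is the common interior angle of both adjacent triangles at $V_i$ (this uses the reflection rule together with the fact that $V_iO$ lies along the line at $V_i$, so the two adjacent triangles share this side), and confirming that the sector widths along a winding trajectory really do alternate as $\alpha,\pi-\alpha,\alpha,\pi-\alpha$. After these setup steps, both halves of the theorem reduce to the short calculations above.
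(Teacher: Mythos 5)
Your proof is correct, and at its core it uses the same strategy as the paper: angle-chasing around the intersection point to show the angle data cannot close up unless $\alpha=\pi/2$, plus a Law of Sines telescope to close up the lengths. The organization differs in a way worth noting. For (a), the paper computes the explicit angle recursion $\theta,\ \alpha-\theta,\ \pi+\theta-2\alpha,\ 3\alpha-\pi-\theta,\ \theta+4\alpha-2\pi$ and observes that the angle drifts by $4\alpha-2\pi$ per revolution; you instead encode the reflection rule as the cyclic linear system $a_i+a_{i+1}=\pi-\delta_i$ and extract the obstruction by summing over the two alternating perfect matchings. For (b), the paper launches the trajectory at initial angle $\beta$ and invokes the reflection symmetry to get the return angle, whereas you write down the explicit solution $a_1=\delta_2,\ a_2=\delta_3,\ a_3=\delta_1$ and verify positivity and the sine-rule closure directly; it is the same trajectory. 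Your framing has the advantage that the alternating-sum consistency condition for the cyclic system is precisely the angle condition that reappears in the paper's general results on $n$ lines (Theorems \ref{yaythm} and \ref{superyaythm}), so your argument generalizes with no extra work; the paper's version has the advantage that the explicit nonzero drift $4\alpha-2\pi$ quantifies how a non-periodic trajectory spirals, which is exploited later in Proposition \ref{periodic_unstable}. The one step you flagged as delicate --- that the reflection rule makes the two adjacent triangles at $V_i$ have equal interior angles there --- does hold, since reflecting the incoming ray across the line through $O$ and $V_i$ fixes the ray $V_iO$ and carries the incoming segment to the outgoing one.
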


\begin{figure}[!h] 
\centering 
\includegraphics[width=0.4\linewidth]{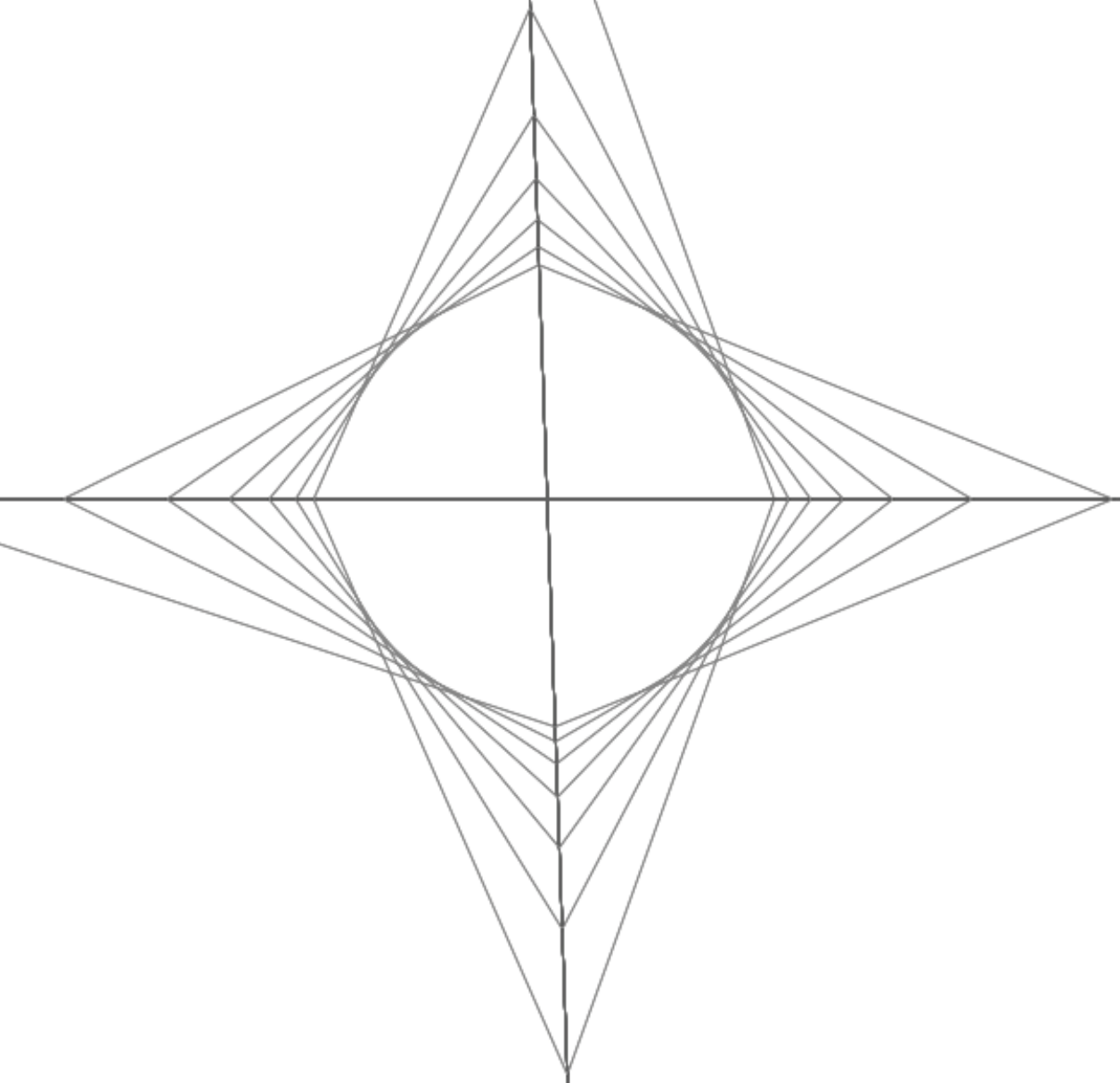} \ \ \ \ \ \ \
\includegraphics[width=0.4\linewidth]{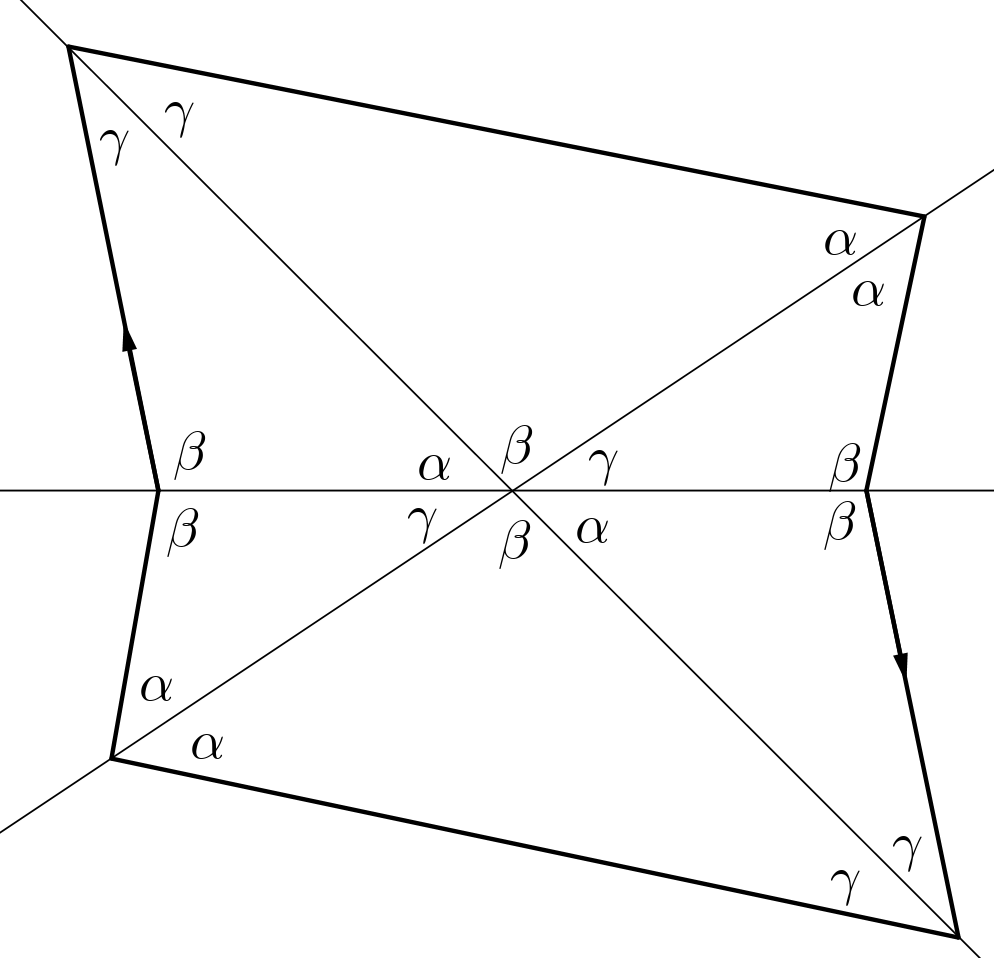}
\caption{(a) There is no periodic trajectory about two non-perpendicular lines (b) There is always a periodic trajectory about the intersection of three lines}
\label{keresults}
\end{figure}

\begin{proof}
(a) Suppose that the two lines intersect at an angle $\alpha$, and the initial angle between the trajectory and a line is $\theta$, measured on the same side as $\alpha$. If a trajectory is periodic, then it must be $4k$-periodic for some natural number $k$, since it must cross both lines twice to return to its starting trajectory. If the initial angle is $\theta$, the first four angles between the trajectory and the lines are $\theta, \alpha-\theta,\pi+\theta-2\alpha, 3\alpha-\pi-\theta$, and then $\theta+4\alpha-2\pi$ with the original line. By induction, we see that the $k^{\text{th}}$ time the trajectory returns to the original line, it will make the angle $\theta + k(4\alpha - 2\pi)$. For the trajectory to be periodic, we need this to be equal to the initial angle, so in a $4k$-periodic orbit, $\theta+k(4\alpha-2\pi)=\theta$, which occurs if and only if $\alpha = \pi/2$. Note that the intersection angle between the trajectory and the lines is measured modulo $2\pi$, but once this angle is greater than both $\alpha$ and $\pi - \alpha$, the trajectory escapes, so we do not consider $\theta > 2\pi$.

When $\alpha = \pi/2$, the lengths are then trivially equal, so the trajectory is periodic, and must have period 4. 
An example of the non-perpendicular behavior is given in Figure \ref{keresults} (a), where the lines meet at $88^\circ$. 

(b) Label the angles of intersection between the three lines $\alpha, \beta$ and $\gamma$, so $\alpha+\beta+\gamma=\pi$. If we start a trajectory on the line between angles $\alpha$ and $\gamma$ with initial angle $\beta$ (Figure \ref{keresults} (b)), then when the trajectory returns to the same line, by symmetry it will have the same angle. It will also be at the same point; the length is calculated via a simple Law of Sines argument. Then the trajectory is periodic with period $6$.
\end{proof}

We will use an analogous construction to show that there is a periodic trajectory about any odd number of non-parallel lines, in Theorem \ref{yaythm}.

Glendinning \cite{glendinning} studies a system that is similar to ours: a ``chessboard'' tiling where the two colors of tiles have different positive indices of refraction, in particular where the refraction coefficient is greater than $1$. This is similar to our setup, except that the angle at a boundary changes, which leads to different behavior than in our system. He finds that the number of angles with which any ray intersects the square lattice is bounded, and that if the refraction coefficient is high enough, one can use interval exchange transformations to describe the dynamics.

\subsection{Results}
In this paper, we extend the results presented in \cite{2012} and \cite{bloch}, and we provide several novel results in tiling billiards. We give a summary below of our results for line arrangements, triangle tilings, and the trihexagonal tiling. 


The first set of results concerns a division of the plane by a finite number of non-parallel lines (a \emph{line arrangement}). We search for periodic trajectories, and find that when the number of lines is odd, the condition is simple:

{\bf Theorem \ref{yaythm}:} For any arrangement of a finite number $n$ of non-parallel lines with $n \geq3$ odd, there is always a periodic trajectory.

When the number of lines is even, the situation is a bit more delicate:

{\bf Theorem \ref{superyaythm}:} For $n \geq2$ even, there is a periodic trajectory if and only if the counter-clockwise angles $\alpha_0,\ldots,\alpha_{n-1}$, between consecutive lines ordered by angle, satisfy
$$0 = \alpha_{0} - \alpha_{1} +\alpha_2-\alpha_3+\dots+ \alpha_{n-4} - \alpha_{n - 3} +\alpha_{n-2} - \alpha_{n-1}.$$


\vspace{1em}

The second set of results concerns \emph{triangle tilings}: tilings by congruent triangles meeting $6$ at a vertex formed by dividing the plane by lines, or equivalently a grid of parallelograms with parallel diagonals. We find some nice results, particularly concerning periodic trajectories. We also find what we call \emph{drift-periodic} trajectories, which are like a staircase or frieze pattern: there exist infinitely many distinct congruent tiles, where the trajectory crosses corresponding edges in the same place, at the same angle, and consecutive corresponding points differ by a constant vector.  

{\bf Theorem \ref{iso}:} In an edge-to-edge tiling of the plane by congruent isosceles triangles meeting $6$ at a vertex formed by dividing the plane by lines, all trajectories are either periodic or drift-periodic.

The above Theorem does not guarantee that an isosceles triangle tiling \emph{has} an escaping trajectory; for example, the equilateral triangle tiling (Figure \ref{regtilings}) does not. By contrast, every right triangle tiling does:

{\bf Theorem \ref{bisecting}:} Every edge-to-edge tiling of the plane by congruent right triangles with parallel diagonals has an escaping trajectory. 

We prove this theorem by construction: if a trajectory bisects a hypotenuse, then it is unbounded. 
For certain  right triangles, we can guarantee not only that there is an escaping orbit, but also that the escaping orbit is drift-periodic:

{\bf Theorem \ref{dper}:} In an edge-to-edge tiling of the plane by congruent right triangles with parallel diagonals, if the smallest angle of a right triangle tiling is $\frac{\pi}{2n}$ for some $n \in \mathbb{N}$, then a drift-periodic orbit exists.

By Theorem \ref{ek}, every triangle tiling has a periodic trajectory of period $6$ that circles one vertex. It turns out that almost every triangle tiling also has a periodic trajectory of period $10$ that circles two vertices:

{\bf Theorem \ref{per10}:} Every edge-to-edge tiling of the plane by congruent triangles meeting $6$ at a vertex formed by dividing the plane by lines, except tilings by isosceles triangles with vertex angle greater than or equal to $\frac{\pi}{3}$, has a periodic orbit with period $10$.

%
%

\vspace{1em}

The third set of results concerns the complex behavior of the \emph{trihexagonal tiling}, which has a regular hexagon and an equilateral triangle meeting at every edge. We first prove several results about local behavior of trajectories in this tiling (Lemmas \ref{turner}, \ref{quadrilateral}, \ref{quad_triangle} and \ref{pent_lemma}). Then we use the local results to prove the existence of several periodic trajectories in the tiling (Examples \ref{6theorem}, \ref{12periodic} and \ref{gooseheadtheorem}), and then to find infinite families of related drift-periodic trajectories (Propositions \ref{drift120} and \ref{turning_drift}). We show that the segments of such trajectories can be very closely packed in the regions they visit:

{\bf Theorem \ref{dense}:}
The trihexagonal tiling exhibits trajectories that fill infinite regions of the plane with line segments that are arbitrarily close together.

\vspace{1em}

We now begin our detailed study of several types of tiling billiards systems.

\section{Line arrangements} \label{alex}

In this paper, we consider the case of tiling billiards where the tiling is created by dividing up the plane by lines. The simplest such system is one with a finite number of non-parallel lines, which we call a \emph{line arrangement} and study in this section.

 Gr{\"u}nbaum also studied line arrangements in \cite{grunbaum}. 


\begin{definition}\label{polyalpha}
Identify one of the lines, and name it $l_0$. Label the remaining lines $l_1, \ldots l_{n-1}$ in order of increasing counter-clockwise angle from $l_0$.
Define the angle $\alpha_{i}$  to be the counter-clockwise angle between lines $l_{i}$ and $l_{i+1}$, considering the indices modulo $n$. 
\end{definition}

\begin{lemma}
\label{alpha_lemma} If we label a finite number of lines $l_0,\ldots l_{n-1}$ counter-clockwise, and let $\alpha_i$ be the counter-clockwise angle between lines $l_{i}$ and $l_{i+1}$ (modulo $n$), then
  $\alpha_{0} + \alpha_{1} + \dots + \alpha_{n-1} = \pi$.
\end{lemma}
\begin{proof}
Parallel-translate the lines so that they coincide at a single point. Then the result is clear.
\end{proof}

\begin{definition}
The convex hull of the intersection points of the lines is called the \emph{central zone}. Because we are considering a finite number of non-parallel lines, the number of intersection points is also finite, so the central zone is bounded (Figure \ref{polygon}). 

For a trajectory $\tau$, let $\theta_i$ be the angle at the trajectory's $i^{\text{th}}$ intersection with a line, measured on the side of the central zone. (This line will be $l_k$, where \mbox{$k \equiv i$ (mod $n$)}.) For convenience, we define the functions $\theta_i(\tau)$, which return this angle for a given trajectory $\tau$.

In any division of the plane containing regions of infinite size, we say that a trajectory \emph{escapes} if it eventually stops refracting across lines and remains in a single region.

If a trajectory  neither enters the central zone, nor escapes, for at least $2n$ iterations (refractions), we call the trajectory \emph{good}.
\end{definition}

We restrict our analysis in this section to good trajectories. This is a reasonable choice because the requirement to stay outside the central zone isn't taxing: one can  ``zoom out" until the central zone is of arbitrarily small size. 
We choose to avoid the central zone because inside the central zone, we cannot guarantee that the trajectory crosses the lines in order, but outside, we can:

\begin{lemma} \label{inorder}
Consider a counter-clockwise good trajectory on a line arrangement of $n$ non-parallel lines, i.e. a trajectory that neither enters the convex hull of the lines' intersection points nor escapes to infinity for at least $2n$ refractions. Such a trajectory crosses the lines in numerical order: $l_{0}, l_{1}$, $\dots, l_{n-1}, l_{0}, l_{1}$, etc. 
\end{lemma}

 Note that a clockwise trajectory would cross the lines in reverse numerical order; without loss of generality, we may assume that the trajectory travels counter-clockwise.

 \begin{proof}
  Assume the trajectory has just crossed $l_{i}$ from the side of $l_{i-1}$. There are four options (see Figure \ref{polygon}): to enter the central zone, to escape, to re-cross $l_{i}$, or to cross $l_{i+1}$. The first two options are impossible by assumption, and the third option is impossible because two lines can only intersect once in the plane. Thus, the trajectory must cross $l_{i+1}$.
 \end{proof}

\begin{figure}[h!]
\centering
\includegraphics[width=250pt]{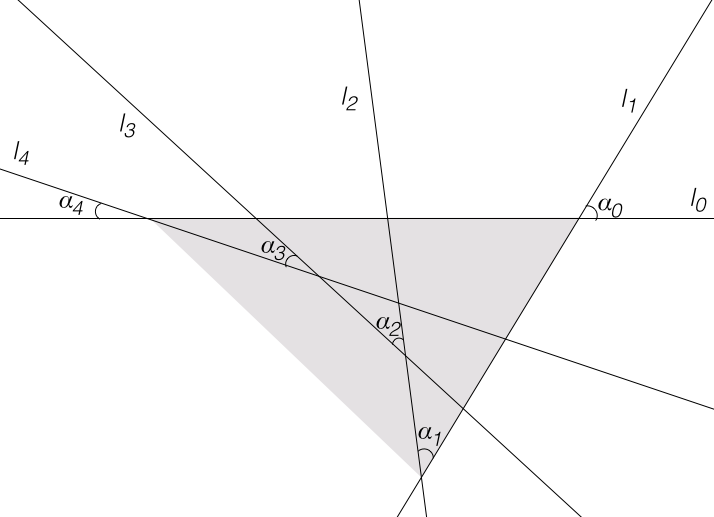}
\caption{A line arrangement with $5$ lines. The central zone is shaded. }
\label{polygon}
\end{figure}

We will use the following classical results about planar geometry.

\begin{lemma}\label{classify}
\begin{enumerate}
\item A  translation has a $1$-parameter family of parallel fixed lines. \label{transfam}
\item A composition of reflections across an odd number of lines is a reflection or glide-reflection. \label{oddrefl}
\item If the counter-clockwise angle between intersecting lines $\ell_i$ and $\ell_{i+1}$ is $\alpha_i$ for $i=0,2,\ldots,2n-2$, then the composition of reflection through $\ell_0, \ell_1,\ldots,\ell_{n-1}$ is a counter-clockwise rotation with angle $2(\alpha_0+\alpha_2+\ldots+\alpha_{n-2})$. \label{ccwangle}
\end{enumerate}
\end{lemma}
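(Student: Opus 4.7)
The plan is to invoke the classification of planar isometries: every isometry of $\mathbb{R}^2$ is a translation, rotation, reflection, or glide-reflection, with the reflections and glide-reflections being exactly the orientation-reversing ones. With this classification in hand, all three parts become short computations.

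For part (\ref{oddrefl}), each reflection reverses orientation, so a composition of an odd number of reflections is orientation-reversing, hence a reflection or a glide-reflection. For part (\ref{transfam}), a translation by a nonzero vector $v$ fixes a line $\ell$ setwise if and only if $\ell$ is parallel to $v$, and the lines parallel to $v$ form a one-parameter family parametrized by signed perpendicular distance from the origin.

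For part (\ref{ccwangle}), the key classical fact I would invoke is that the composition $R_{\ell'} \circ R_{\ell}$ of two reflections across lines meeting at a point $p$ is a rotation about $p$ through twice the counter-clockwise angle from $\ell$ to $\ell'$. Since $n$ is even, I would pair the reflections in the order they are applied,
$$R_{\ell_{n-1}} \circ R_{\ell_{n-2}} \circ \cdots \circ R_{\ell_1} \circ R_{\ell_0} \;=\; \bigl(R_{\ell_{n-1}} \circ R_{\ell_{n-2}}\bigr) \circ \cdots \circ \bigl(R_{\ell_1} \circ R_{\ell_0}\bigr),$$
so that each pair $R_{\ell_{i+1}} \circ R_{\ell_i}$ (for even $i$) is a counter-clockwise rotation by $2\alpha_i$ about the point $\ell_i \cap \ell_{i+1}$. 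Then I would use the fact that a composition of rotations by angles $\theta_1, \theta_2, \ldots$ is again a rotation whose angle is $\theta_1 + \theta_2 + \cdots$ (degenerating to a translation exactly when the sum vanishes modulo $2\pi$), so the full composition rotates counter-clockwise by $2(\alpha_0 + \alpha_2 + \cdots + \alpha_{n-2})$, as claimed.

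The main obstacle is fixing a sign convention for angles so that the identity $R_{\ell'} \circ R_{\ell} =$ (rotation by $2\angle(\ell,\ell')$) is unambiguous; once the convention matches the counter-clockwise labeling from Definition \ref{polyalpha}, each pair contributes exactly $2\alpha_i$ and the result follows by additivity of rotation angles. One should also flag the degenerate case where the total angle is a multiple of $2\pi$, in which case the composition is a translation rather than a nontrivial rotation, though this case is unimportant for the applications in which the lemma is used.
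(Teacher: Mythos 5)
Your argument is correct, but it is worth noting that the paper does not actually prove Lemma \ref{classify} at all: it simply defers to the reference \cite{barker}, treating all three statements as classical facts. What you have written is essentially the standard textbook derivation that such a reference would contain. Part (\ref{oddrefl}) via orientation parity plus the classification of planar isometries, and part (\ref{transfam}) via the observation that a nontrivial translation preserves exactly the lines parallel to its vector, are both exactly right. For part (\ref{ccwangle}), your pairing $\bigl(R_{\ell_{n-1}} \circ R_{\ell_{n-2}}\bigr) \circ \cdots \circ \bigl(R_{\ell_1} \circ R_{\ell_0}\bigr)$ uses the correct application order (reflect in $\ell_0$ first), each pair is a rotation by $2\alpha_i$ consistent with the counter-clockwise convention of Definition \ref{polyalpha}, and you correctly invoke additivity of rotation angles (the linear part of an orientation-preserving isometry is a homomorphism to $SO(2)$) even though the individual rotations have different centers. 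Your flag about the degenerate case --- the composition being a translation when the total angle vanishes modulo $2\pi$ --- is a point the lemma's statement glosses over, and it is in fact the relevant case in Proposition \ref{notrot}, where $T^2$ is a rotation by $2\pi$ and hence the identity; so your caveat is not merely pedantic. In short, you have supplied a complete and careful proof where the paper supplies only a citation.
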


For proofs of these results, see \cite{barker}.

\begin{theorem}\label{yaythm}
For a line arrangement of $n$ non-parallel lines with $n\geq 3$ odd, there is always a trajectory of period $2n$. A periodic direction is given by the initial angle (measured between the trajectory and $l_0$, on the same side as the central zone) 
\begin{equation}\label{odd-condition}
\theta=\alpha_1+\alpha_3+\ldots+\alpha_{n-2}.
\end{equation} 
\end{theorem}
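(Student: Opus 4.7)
The plan is to exploit Lemma~\ref{classify}(\ref{oddrefl}): since $n$ is odd, the composition of the $n$ reflections over one cycle of crossings,
\[
R := r_{l_0} \circ r_{l_{n-1}} \circ \cdots \circ r_{l_1},
\]
is a (glide-)reflection, so $R^2$ is a translation along its axis $a$. Choosing the trajectory's direction parallel to $a$ will force the initial support line to be preserved by $R^2$, which will make the trajectory close up after $2n$ crossings.

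In more detail, let $L_i$ be the line supporting the $i$-th segment of the trajectory. The reflection rule gives $L_{i+1} = r_{l_{(i+1)\bmod n}}(L_i)$, so $L_n = R(L_0)$ and $L_{2n} = R^2(L_0)$. With $L_0 \parallel a$, the translation $R^2$ fixes $L_0$ setwise, so $L_{2n} = L_0$. The $(2n)$-th crossing $P_{2n}$ lies on $l_0$ (since $2n \equiv 0 \pmod n$) and on $L_{2n}= L_0$; as $L_0 \neq l_0$, their intersection is the single point $P_0$, and so $P_{2n} = P_0$, making the trajectory periodic with period $2n$. Choosing $P_0$ sufficiently far out along $l_0$ keeps the trajectory outside the central zone, so Lemma~\ref{inorder} guarantees the assumed crossing order and thus the validity of the calculation.

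To identify the axis direction, I would use the complex representation $r_{l_i}(z) = e^{2i\phi_i}\bar z + c_i$, where $\phi_i$ is the directed angle of $l_i$. A short induction on the number of factors of $R$ gives $R(z) = e^{2i\Phi}\bar z + B$ with
\[
\Phi = \phi_0 - \phi_{n-1} + \phi_{n-2} - \phi_{n-3} + \cdots + \phi_1,
\]
so $a$ has direction $\Phi$. Grouping the $n-1$ tail terms into consecutive pairs,
\[
\Phi - \phi_0 = -(\phi_{n-1} - \phi_{n-2}) - (\phi_{n-3} - \phi_{n-4}) - \cdots - (\phi_2 - \phi_1) = -(\alpha_1 + \alpha_3 + \cdots + \alpha_{n-2}),
\]
so the unsigned angle between $L_0 \parallel a$ and $l_0$ equals $\alpha_1 + \alpha_3 + \cdots + \alpha_{n-2}$, matching~(\ref{odd-condition}).

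The main technical obstacle is careful sign-and-side bookkeeping: one must confirm that this angle, measured on the side of $l_0$ containing $l_0 \cap l_1$, really agrees with~(\ref{odd-condition}), and that $L_0 \neq l_0$ (which follows from $\alpha_1 + \alpha_3 + \cdots + \alpha_{n-2} \in (0,\pi)$, itself a consequence of Lemma~\ref{alpha_lemma}). Everything else is routine from the composition law of plane isometries.
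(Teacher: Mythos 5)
Your proposal is correct and follows essentially the same route as the paper: both use Lemma~\ref{classify}(\ref{oddrefl}) to see that the composition $T$ of the $n$ reflections is a reflection or glide-reflection, so $T^2$ is the identity or a translation, and then take a fixed line avoiding the central zone to obtain a period-$2n$ trajectory. The only difference is that you carry out the explicit computation of the axis direction (via the complex form $z\mapsto e^{2i\phi}\bar z+c$) that the paper states but omits, which is a welcome addition.
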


\begin{proof}
Let $T$ be the composition of reflections in $l_0, \ldots, \l_{n-1}$. Any good trajectory (see below for its existence) will circle the central zone before it gets back to $l_0$, reflecting across the $n$ lines in order twice, so we consider the transformation $T^2$. Since $n$ is odd, by Lemma \ref{classify} part \ref{oddrefl}, $T$ is either a reflection or a glide-reflection, so $T^2$ is either the identity or a translation, respectively. If $T^2$ is the identity, then there is a two-parameter family of periodic trajectories. If $T^2$ is a translation, by Lemma \ref{classify} part \ref{transfam} there is a $1$-parameter family of parallel fixed lines. Choose any fixed line that does not intersect the central zone; this provides a periodic trajectory.  Such a trajectory circles the central zone once, refracting across each line twice, so it has period $2n$.

We will now show that there exists a good trajectory with initial angle $\theta$ given by (\ref{odd-condition}). By construction, for $0\leq i \leq n-1$,
$$\theta_i = \ldots +  \alpha_{i-4}  +  \alpha_{i-2}  +  \alpha_{i+1}  +  \alpha_{i+3}  + \ldots,$$
where the angles included in the sum are the $\alpha_k$ with  $k$ satisfying $0\leq k \leq n-1$, in other words all those that are defined. We also have $\theta_i = \theta_{i\text{\  (mod \ }n)}$ for $i \geq n$. 

From this we can see that $0 < \theta_i < \pi$, the first inequality because no two lines are parallel, and the second inequality because the sum of the $\alpha_i$ is $\pi$ (Lemma \ref{alpha_lemma}). Since $\theta_i > 0$, it is possible to start our trajectory far enough from the central zone that it does not enter the central zone. Since $\theta_i < \pi$, the trajectory does not escape: the piece of trajectory between the points of intersection where $\theta_i$ and $\theta_{i+1}$ are measured forms a (non-degenerate) triangle with $l_i$ and $l_{i+1}$, which has angles $\alpha_i, \theta_i, \theta_{i+1}$.
\end{proof}

\begin{corollary} \label{ngoncorollary}
Let $n\geq 3$ be odd. If all of the angles are equal, i.e. $a_i=\pi/n$ for each $i$, then the angle condition (\ref{odd-condition}) reduces to $\theta_0=\frac{n-1}{2n}\pi$, which gives a periodic direction; the trajectory forms an equiangular polygon. 
\end{corollary}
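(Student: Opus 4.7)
The plan is to simply apply Theorem \ref{yaythm} and substitute. Theorem \ref{yaythm} provides the explicit periodic direction
\[
\theta = \alpha_1 + \alpha_3 + \ldots + \alpha_{n-2},
\]
so the only task is to evaluate this sum in the special case $\alpha_i = \pi/n$ for every $i$.

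The indices appearing in the sum are the odd integers $1, 3, 5, \ldots, n-2$. Since $n$ is odd, $n-2$ is also odd, and these indices form an arithmetic progression with common difference $2$ starting at $1$ and ending at $n-2$. The count of such indices is $\frac{(n-2)-1}{2} + 1 = \frac{n-1}{2}$. Substituting $\alpha_i = \pi/n$ into the sum then gives
\[
\theta \;=\; \frac{n-1}{2}\cdot\frac{\pi}{n} \;=\; \frac{n-1}{2n}\,\pi,
\]
which is exactly the claimed periodic direction.

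There is no real obstacle here; this is a direct corollary whose content is just a counting argument on the odd indices between $1$ and $n-2$, followed by one multiplication. The periodicity of the resulting trajectory is guaranteed by Theorem \ref{yaythm} itself, so no further geometric verification is required.
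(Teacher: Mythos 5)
Your proof is correct and matches the paper's approach: the paper states this as an immediate corollary of Theorem \ref{yaythm}, and your substitution of $\alpha_i=\pi/n$ into $\theta=\alpha_1+\alpha_3+\cdots+\alpha_{n-2}$ together with the count of $\frac{n-1}{2}$ odd indices is exactly the intended computation.
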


Corollary \ref{ngoncorollary} applies in the case that the lines are the extensions of the edges of a regular polygon, or any equiangular polygon.

%

\begin{proposition} \label{notrot}
For a line arrangement of $n$ non-parallel lines $l_0, \ldots, l_{n-1}$ with $n\geq 2$ even, ordered by increasing counter-clockwise angle, where $\alpha_i$ is the counter-clockwise angle between lines $l_{i}$ and $l_{i+1}$ (modulo $n$), let $T$ be the composition of reflections through $l_0, \ldots, l_{n-1}$, and consider the following angle condition:
\begin{equation} \label{even-condition}
0 = \alpha_{0} - \alpha_{1} +\alpha_2-\alpha_3+\dots+ \alpha_{n-4} - \alpha_{n - 3} +\alpha_{n-2} - \alpha_{n-1}.
\end{equation}
$T$ is a rotation by $\pi$, and $T^2$ is the identity, if and only if (\ref{even-condition}) is satisfied. 
\end{proposition}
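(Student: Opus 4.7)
The plan is to apply Lemma \ref{classify} part \ref{ccwangle} to identify the rigid motion $T$ explicitly, and then to translate the resulting angle condition into (\ref{even-condition}) using Lemma \ref{alpha_lemma}.

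First, since $n$ is even, the composition $T$ of $n$ reflections is orientation-preserving. Grouping the $n$ reflections into consecutive pairs $(\ell_0,\ell_1),(\ell_2,\ell_3),\dots,(\ell_{n-2},\ell_{n-1})$, each pair contributes a counter-clockwise rotation by $2\alpha_{2k}$, so by Lemma \ref{classify} part \ref{ccwangle} the composition $T$ is a counter-clockwise rotation through angle $2(\alpha_0+\alpha_2+\cdots+\alpha_{n-2})$. Because each $\alpha_i$ is strictly positive and $\sum_{i=0}^{n-1}\alpha_i=\pi$ by Lemma \ref{alpha_lemma}, the partial sum $\alpha_0+\alpha_2+\cdots+\alpha_{n-2}$ lies strictly in $(0,\pi)$, so the rotation angle lies strictly in $(0,2\pi)$; in particular $T$ is a genuine (nontrivial) rotation and not a translation.

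Next, $T$ is a rotation by $\pi$ precisely when
$$\alpha_0+\alpha_2+\cdots+\alpha_{n-2}=\frac{\pi}{2}.$$
Using $\sum_{i=0}^{n-1}\alpha_i=\pi$ from Lemma \ref{alpha_lemma}, this is equivalent to $\alpha_0+\alpha_2+\cdots+\alpha_{n-2}=\alpha_1+\alpha_3+\cdots+\alpha_{n-1}$, which after rearrangement is exactly the alternating-sum condition (\ref{even-condition}). The additional clause ``$T^2$ is the identity'' comes for free, since any rotation by $\pi$ squares to a rotation by $2\pi$, i.e. the identity. Conversely, if (\ref{even-condition}) holds, the same chain of equivalences runs backward to give that $T$ is a rotation by $\pi$ and hence $T^2=\mathrm{id}$.

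I do not anticipate a substantive obstacle: the argument is essentially a bookkeeping exercise built on the two previously established lemmas. The only point requiring care is ruling out the degenerate case in which the partial sum forces $T$ to be a translation (Lemma \ref{classify} part \ref{transfam}) rather than a rotation, and this is handled cleanly by the strict positivity of the $\alpha_i$ together with the constraint $\sum\alpha_i=\pi$.
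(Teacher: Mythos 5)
Your proof is correct and follows essentially the same route as the paper's: both invoke Lemma \ref{classify} part \ref{ccwangle} to identify $T$ as a counter-clockwise rotation by $2(\alpha_0+\alpha_2+\cdots+\alpha_{n-2})$ and then use Lemma \ref{alpha_lemma} to convert the condition $\alpha_0+\alpha_2+\cdots+\alpha_{n-2}=\pi/2$ into the alternating-sum condition (\ref{even-condition}). Your added observation that the rotation angle lies strictly in $(0,2\pi)$ --- ruling out the degenerate case where the composition degenerates to a translation or the identity --- is a small but worthwhile refinement that the paper's proof leaves implicit.
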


\begin{proof}
By Lemma \ref{alpha_lemma}, the sum of all the $\alpha_i$ is $\pi$, so an equivalent statement to (\ref{even-condition}) is $$\alpha_0 + \alpha_2 + \ldots + \alpha_{n-2}=\alpha_1 + \alpha_3 + \ldots + \alpha_{n-1}=\pi/2.$$
By Lemma \ref{classify} part \ref{ccwangle}, a composition of reflections through $l_0, l_1, \ldots, l_{n-1}$ is a rotation by angle $2(\alpha_0 + \alpha_2 + \ldots + \alpha_{n-2})$, so $T$ is a rotation by angle $2(\pi/2)=\pi$ if and only if (\ref{even-condition}) is satisfied.  Any good trajectory will circle the central zone before it gets back to $l_0$, reflecting across the $n$ lines in order twice, so we should study the transformation $T^2$. $T^2$ is a rotation of angle $2\pi$, and is thus the identity, if and only if (\ref{even-condition}) is satisfied, as desired.
\end{proof}

\begin{corollary} \label{for-spiral} For a trajectory $\tau$ on a line arrangement defined as in Proposition \ref{notrot}, let $\theta_i(\tau)$ be the angle between $\tau$ and $\l_i$, measured on the side of the central zone.

If $\theta_n(\tau) = \theta_0(\tau)$, then $\theta_{i+n}(\tau)=\theta_i$ for all $i\geq 0$.

If $\theta_n(\tau) = \theta_0(\tau)$, then $\tau$ is periodic.
\end{corollary}

The result of Corollary \ref{for-spiral} is surprising: We need only check an angle condition to see that a given trajectory is periodic; we need not check that the trajectory returns to the same point on $l_0$.

\begin{theorem}\label{superyaythm}
For a line arrangement of $n$ non-parallel lines $l_0, \ldots, l_{n-1}$ with $n\geq 2$ even, ordered by increasing counter-clockwise angle, let $\alpha_i$ be the counter-clockwise angle between lines $l_{i}$ and $l_{i+1}$ (modulo $n$). If (\ref{even-condition}) is satisfied, then every good trajectory is periodic. If (\ref{even-condition}) is not satisfied, then no good trajectory is periodic.
\end{theorem}

\begin{proof}
If (\ref{even-condition}) is satisfied, then by Proposition \ref{notrot}, $T^2$ is the identity, so it has a two-parameter family of fixed lines; as in Theorem \ref{yaythm}, this yields a periodic trajectory of period $2n$.

Suppose that (\ref{even-condition}) is not satisfied. For a good trajectory $\tau$, $\theta_{2n}(\tau) = \theta_0(\tau) + 2(\alpha_0-\alpha_1+\ldots-\alpha_{n-1})$. This means that twice the amount that the number in the right-hand side of (\ref{even-condition}) differs from $0$ is added to  $\theta_0$ each time $\tau$ circles the central zone. Eventually the angle $\theta_{2kn}(\tau)$ will be less than $0$ (if this number is negative) or greater than $\pi$ (if this number is positive), so the trajectory will enter the central zone or escape, so no such $\tau$ is periodic.
%
\end{proof}

We also consider the special case where all of the lines coincide, so the central zone is just the point of coincidence.

\begin{corollary} \label{n-lines}
Divide the plane by $n\geq 2$ lines $l_0, \ldots, l_{n-1}$ coinciding at a point, ordered by increasing counter-clockwise angle, where $\alpha_i$ is the counter-clockwise angle between lines $l_{i}$ and $l_{i+1}$ (modulo $n$), and consider a trajectory $\tau$ with initial angle $\theta$.
\begin{enumerate}
\item If $n$ is odd and (\ref{odd-condition}) is satisfied, then $\tau$ is periodic.
\item If $n$ is even and (\ref{even-condition}) is satisfied, then every non-escaping trajectory is periodic; if (\ref{even-condition}) is not satisfied, then every trajectory escapes.
\end{enumerate}
\end{corollary}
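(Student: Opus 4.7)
The plan is to dispatch parts (1) and the ``if'' direction of (2) by direct application of Theorems \ref{yaythm} and \ref{superyaythm}, and then to argue the escape statement in (2) with a single monotonicity argument. For the easy direction: when the $n$ lines are concurrent at a point $P$, the central zone defined earlier collapses to the single point $P$, so any trajectory that does not pass through $P$ is automatically good in the sense required by those theorems. Hence a trajectory with initial angle (\ref{odd-condition}) is periodic by Theorem \ref{yaythm}, proving part (1), and every non-escaping trajectory is periodic by Theorem \ref{superyaythm} when (\ref{even-condition}) holds, proving the first half of part (2).

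The substantive content is showing that when $n$ is even and (\ref{even-condition}) fails, every trajectory escapes. The approach is to track a single scalar $\psi_k \in (0,\pi)$, defined as the counterclockwise angle from the line $l_{k \bmod n}$ to the outgoing trajectory direction at the $k$-th refraction point $A_k$. A short check with the tiling-billiards reflection rule establishes the recurrence
\[ \psi_{k+1} = \pi + \alpha_{k \bmod n} - \psi_k, \]
and a triangle-geometry argument shows that the trajectory continues past step $k$ exactly when $\psi_k > \alpha_{k \bmod n}$; otherwise the outgoing ray is too shallow to reach $l_{(k+1) \bmod n}$ and escapes into an infinite wedge. Iterating the recurrence twice gives the telescoping increment $\psi_{k+2} - \psi_k = \alpha_{(k+1) \bmod n} - \alpha_{k \bmod n}$, and summing over one full loop of $2n$ refractions, each index $\alpha_i$ appears twice with a sign depending on the parity of $i$, yielding
\[ \psi_{2n} - \psi_0 = 2\bigl(\alpha_1+\alpha_3+\cdots+\alpha_{n-1}\bigr) - 2\bigl(\alpha_0+\alpha_2+\cdots+\alpha_{n-2}\bigr) =: \Delta, \]
which, by Lemma \ref{alpha_lemma}, vanishes exactly when (\ref{even-condition}) holds.

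When (\ref{even-condition}) fails we have $\Delta \neq 0$, and induction on full loops yields $\psi_{2kn} = \psi_0 + k\Delta$ for every $k \geq 0$ for which the trajectory has not yet escaped. Since $\psi_{2kn}$ must stay in the bounded interval $(0,\pi)$ whenever the trajectory continues, this identity can hold for only finitely many $k$; hence the trajectory escapes after at most $\lceil \pi/|\Delta| \rceil$ full loops. The main technical obstacle is setting up the sign conventions for the ``reflect across the boundary'' rule carefully so that the recurrence $\psi_{k+1} = \pi + \alpha_{k \bmod n} - \psi_k$ has the right signs and so that $\psi_k \in (0,\pi)$ is stable under the recurrence whenever the trajectory continues; once this is in place, the telescoping calculation and the divergence argument are each only a few lines.
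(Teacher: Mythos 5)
Your proposal is correct in outline, but it takes a very different route from the paper for the simple reason that the paper offers essentially no proof here: its entire argument is the sentence ``These follow directly from Theorem \ref{yaythm} and Theorem \ref{superyaythm}.'' Your reduction of part (1) and the first half of part (2) to those theorems (via the observation that the central zone degenerates to the concurrency point $P$, so every trajectory avoiding $P$ satisfies the ``good'' hypothesis as long as it keeps refracting) is exactly the intended content. Where you genuinely add something is the escape claim in part (2): Theorem \ref{superyaythm} only yields that no good trajectory is \emph{periodic}, and that is strictly weaker than ``every trajectory escapes'' --- indeed Proposition \ref{periodic_unstable} exhibits, for odd $n$, trajectories that circle forever without being periodic and without escaping, so the implication ``not periodic $\Rightarrow$ escapes'' is not automatic and the paper's one-line proof quietly skips it. Your angle-drift argument closes that gap: the per-loop increment $\psi_{2(k+1)n}-\psi_{2kn}=2\bigl(\sum_{i \text{ odd}}\alpha_i-\sum_{i \text{ even}}\alpha_i\bigr)$ is (up to sign convention) the rotation angle of $T^2$ from Proposition \ref{notrot}, it is nonzero exactly when (\ref{even-condition}) fails, and confinement of the angle to $(0,\pi)$ then forces escape after finitely many loops. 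Two small cautions: first, ``automatically good'' is slightly too strong --- a trajectory avoiding $P$ could still escape within the first $2n$ refractions, so you should phrase goodness as conditional on not yet having escaped (harmless for your argument, which only tracks the trajectory while it survives); second, the precise constant in the recurrence $\psi_{k+1}=\pi+\alpha_{k\bmod n}-\psi_k$ depends on which of the two supplementary angles you measure at each crossing (compare the paper's own $\theta_{k+1}=\alpha_k-\theta_k$ in the proof of Theorem \ref{ek}(a)), but since any such affine recurrence $\psi_{k+1}=c_k-\psi_k$ telescopes to the same two-step increment $\alpha_{(k+1)\bmod n}-\alpha_{k\bmod n}$, this only affects bookkeeping, as you correctly anticipate.
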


These follow directly from Theorem \ref{yaythm} and Theorem \ref{superyaythm}.

The above results show when periodic trajectories exist, and how to construct them. Now we will show that some periodic trajectories are stable and some are not. We will also show that when a trajectory is not periodic, it can spiral. 

\begin{definition}
\label{stable}
A periodic trajectory is {\it stable} if, when the initial angle is changed by some arbitrarily small angle, the trajectory hits the same series of edges and then returns to its starting angle and location. 
A non-periodic trajectory starting on $l_0$ \emph{spirals} outward if it circles the central zone and returns to $l_0$ with the same angle but further from the central zone. 
\end{definition}

\begin{proposition}
\label{periodic_unstable}


Consider a line arrangement of $n$ non-parallel lines $l_0, \ldots, l_{n-1}$ ordered by increasing counter-clockwise angle. Let $\tau$ be a good trajectory starting on $l_0$ going counter-clockwise, and let $\theta_i(\tau)$ be the angle between $\tau$ and $\l_i$, measured on the side of the central zone, and suppose that $\theta_0(\tau) = \theta_n(\tau)$. Then $\tau$ is periodic. If $n$ is even, $\tau$ is stable. If $n$ is odd, $\tau$ is not stable; the perturbed trajectory spirals. 
\end{proposition}
\begin{proof}
Since $\theta_{n} = \theta_{0}$, the trajectory is periodic by Corollary \ref{for-spiral}, so it suffices to examine the stability.

Let  $\tau'$ be the trajectory that starts in the same place as $\tau$, and has initial angle $\theta_0(\tau') = \theta_0(\tau)+\epsilon$, where  $\epsilon$ is small enough (positive or negative) so that $\tau'$ is still a good trajectory.

Then we have 
\begin{align*}
\theta_{0}(\tau') &= \theta_{0}(\tau) + \epsilon \\
\theta_{1}(\tau') &= \theta_{1}(\tau) - \epsilon \\
\vdots \\
\theta_{i}(\tau') &= \theta_{i}(\tau) +(-1)^i \epsilon.
\end{align*}


If $n$ is even, then $\theta_{n}(\tau') = \theta_{n}(\tau) + \epsilon = \theta_{0}(\tau')$. Since the perturbation $\tau'$ of $\tau$ hits the same series of edges and returns to its starting angle and location, $\tau$ is stable.

If $n$ is odd, then $\theta_{n}(\tau') = \theta_{n}(\tau) - \epsilon = \theta_{0}(\tau') - 2\epsilon$. 
By Corollary \ref{n-lines}, $\tau$ is periodic, so we still have $\theta_{2n} = \theta_{0}$, and $\tau'$ spirals.
\end{proof}

\section{Triangle Tilings} \label{jenny}

In this section, we investigate tiling billiards where lines divide the plane in a very regular way, into congruent triangles. The behavior of the system in some cases is simple, perhaps because of the substantial symmetry.

\begin{definition} \label{triangletiling}
A {\it triangle tiling} is a covering of the Euclidean plane with non-overlapping congruent copies of the \emph{tiling triangle} so that the tiling is a grid of parallelograms with parallel diagonals.
\end{definition}

The valence of every vertex of a triangle tiling is $6$. Note that the reflection-symmetric $30^{\circ}$-$60^{\circ}$-$90^{\circ}$ triangle tiling in Figure \ref{regtilings} is \emph{not} a triangle tiling by our definition. Since every vertex of a triangle tiling is the intersection of three lines, we have the following result:

\begin{corollary}[to Theorem \ref{ek}]\label{all-triangles}
Every triangle tiling has a trajectory of period $6$ around each vertex.
\end{corollary}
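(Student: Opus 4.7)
The plan is to deduce the result directly from Corollary \ref{n-lines} (equivalently from Theorem \ref{ek}(b)) by working locally at a single vertex.

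First I would observe that by Definition \ref{triangletiling}, a triangle tiling is a grid of parallelograms with parallel diagonals, which means that the tiling is the union of three families of parallel lines. Hence at every vertex $v$, exactly three of these lines cross, and the six triangles adjacent to $v$ are the images of the tiling triangle under the reflections and half-turns generated by those three lines.

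Next I would fix a vertex $v$, name the three lines through it $\ell_0, \ell_1, \ell_2$ in order of counter-clockwise angle, and let $\alpha_0, \alpha_1, \alpha_2$ be the consecutive counter-clockwise angles between them. These are a cyclic permutation of the three interior angles of the tiling triangle, so $\alpha_0 + \alpha_1 + \alpha_2 = \pi$, consistent with Lemma \ref{alpha_lemma}. Starting a trajectory on $\ell_0$ with the initial angle $\theta = \alpha_1$ prescribed by condition (\ref{odd-condition}) with $n=3$, Corollary \ref{n-lines} part 1 (equivalently the explicit construction in Theorem \ref{ek}(b)) produces a closed trajectory of period $6$ that circles $v$.

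The one subtle point, which I expect to be the main obstacle, is that Corollary \ref{n-lines} is stated for a plane divided \emph{only} by the $n$ coincident lines, whereas in a triangle tiling there are infinitely many other lines present. This is handled by the observation that the construction is scale-invariant: dilating about $v$ sends the period-$6$ polygon to a similar one with the same crossing angles, so for a starting point chosen sufficiently close to $v$ the entire orbit is contained in the union of the six triangles adjacent to $v$. Inside this hexagonal star, the only edges present are segments of $\ell_0, \ell_1, \ell_2$, so the trajectory refracts exactly as in the three-line model, and the argument of Corollary \ref{n-lines} applies verbatim. Since $v$ was arbitrary, this produces a period-$6$ trajectory around every vertex.
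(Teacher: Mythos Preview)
Your proposal is correct and follows exactly the approach the paper intends: the paper gives no separate proof, merely remarking ``Since every vertex of a triangle tiling is the intersection of three lines'' and then stating the corollary as an immediate consequence of Corollary~\ref{n-lines}. Your additional observation about scaling the orbit down so that it stays inside the six triangles adjacent to $v$ (and hence only meets the three lines through $v$) makes explicit a point the paper leaves tacit.
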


We have several elementary observations about triangle tilings that lead to surprisingly powerful results, the first of which is the following Lemma:

\begin{lemma}[Angle Adding Lemma]\label{angle-adding}
Consider a trajectory that consecutively meets the two legs of the tiling triangle that form angle $\alpha$. If the angle that the trajectory makes with the first leg, on the side away from $\alpha$, is $\theta$, then the angle that the trajectory makes with the second leg, on the side away from $\alpha$, is $\theta + \alpha$.
\end{lemma}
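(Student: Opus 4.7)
The plan is to apply the triangle exterior-angle theorem to the triangle $\triangle AVB$ carved out by the trajectory segment between the two refractions and the two legs themselves, where $V$ is the common vertex of $L_1$ and $L_2$, and $A$, $B$ are the successive refraction points on $L_1$, $L_2$. Since the two crossings are consecutive, the segment $\overline{AB}$ lies entirely inside the tile with angle $\alpha$ at $V$, so $\angle AVB = \alpha$.

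First I identify the interior angle $\angle VAB$ with $\theta$. The refraction at $A$ reflects the trajectory line across $L_1$, so the incoming ray (on the away-from-$\alpha$ side of $L_1$) and the outgoing segment $\overline{AB}$ (on the tile side) are mirror images in $L_1$ and therefore make equal angles with $L_1$; tracing through the reflection shows that the hypothesized angle $\theta$ equals $\angle VAB$. With $\angle AVB = \alpha$, the triangle angle sum gives $\angle VBA = \pi - \theta - \alpha$, and hence the exterior angle of $\triangle AVB$ at $B$ --- formed by $\overline{BA}$ together with the $L_2$-ray from $B$ pointing away from $V$ --- is $\theta + \alpha$.

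The final step is to apply refraction at $B$: reflection across $L_2$ sends the incoming ray to the outgoing ray on the away-from-$\alpha$ side of $L_2$, preserving the angle with $L_2$, so the outgoing ray makes angle $\theta + \alpha$ with $L_2$ on that side --- exactly the lemma's conclusion. The only real subtlety is the bookkeeping: one must check that ``side away from $\alpha$'' selects the interior angle $\angle VAB$ at $A$ but the \emph{exterior} angle at $B$. This asymmetry is geometrically natural, since at $L_1$ the trajectory occupies the away-from-$\alpha$ side \emph{before} refracting, while at $L_2$ it occupies that side \emph{after} refracting; once accounted for, the lemma reduces to the elementary identity $\theta + \alpha = \pi - \angle VBA$.
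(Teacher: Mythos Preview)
Your proof is correct and follows essentially the same route as the paper: form the triangle cut out by the trajectory segment and the two legs, identify $\theta$ with the interior angle at the first refraction point, and read off $\theta+\alpha$ as the exterior angle at the second. The paper's version is a single sentence relying on the figure; your write-up makes explicit the refraction bookkeeping (why reflection across $L_1$ and $L_2$ lets you identify the ``away-from-$\alpha$'' angles with the interior and exterior angles of $\triangle AVB$), which the paper leaves implicit.
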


\begin{proof}
If a trajectory hits the legs of angle $\alpha$, then it forms a triangle, where one angle is $\alpha$ and another is the initial angle $\theta$ of the trajectory. So the exterior angle of the third angle must be $\theta + \alpha$, as shown in Figure \ref{angleadds}.
\end{proof}

\begin{figure}[h!]
\centering
\includegraphics[scale=.8]{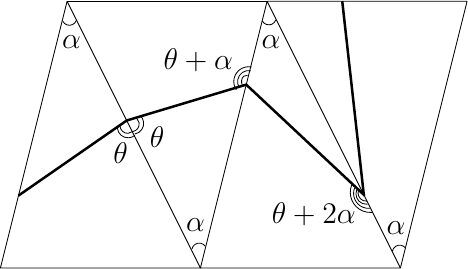}
\caption{The angle a trajectory makes with a leg of a tiling triangle increases by $\alpha$ on the side of the trajectory away from the angle $\alpha$.}
\label{angleadds}
\end{figure}


\subsection{Isosceles triangle tilings}

\begin{theorem} \label{classification}
Consider an isosceles triangle tiling, and let the vertex angle be $\alpha$.
\begin{enumerate}
\item All trajectories are either periodic or drift-periodic.
\item Consider a trajectory making angle $\theta < \alpha$ with one of the congruent legs. Let $n \in \mathbb{N}$ be the unique value such that $\pi - \alpha \leq \theta + n\alpha < \pi$. If n is even, the maximum period of a drift-periodic trajectory is $2n+4$ and the maximum period of a periodic trajectory is $2n + 2$; if $n$ is odd, the maximum period of a drift-periodic trajectory is $2n + 2$ and the maximum period of a periodic trajectory is $2n + 4$.
\end{enumerate}
\label{iso}
\end{theorem}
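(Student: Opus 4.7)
The plan is to analyze trajectories by tracking, via the Angle Adding Lemma, the angle $\theta_i$ that the trajectory makes with each successive edge. Every in-tile passage enters one edge and exits another, with the two edges meeting at one of the triangle's three interior angles: either the apex angle $\alpha$ (between the two congruent legs) or a base angle $\beta=(\pi-\alpha)/2$ (between a leg and the base). By the Angle Adding Lemma, the corresponding angle update is $\theta_{i+1}=\theta_i+\alpha$ in the first (\emph{apex-crossing}) case and $\theta_{i+1}=\theta_i+\beta$ in the second (\emph{base-crossing}) case. So the trajectory's angle sequence is completely encoded by a sequence of symbols in $\{\alpha,\beta\}$ plus some signs dictated by the local geometry.

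Starting from a leg-crossing at angle $\theta<\alpha$, I will show that the trajectory first undergoes a run of apex crossings that increment the angle through $\theta,\,\theta+\alpha,\,\theta+2\alpha,\ldots$ As long as the angle stays below $\pi-\alpha$, a further apex-crossing is geometrically available, so this run must continue until the angle enters $[\pi-\alpha,\pi)$, which by the definition of $n$ happens after exactly $n$ apex-crossings. At that moment any further apex-crossing would require an angle $\geq\pi$, which is impossible, so the trajectory is forced to cross a base edge instead. A $\beta$-update then occurs, placing the trajectory in an angular configuration that is the mirror image of the original.

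To prove part (1), the plan is to exploit the reflection symmetries of the isosceles triangle tiling: the perpendicular bisectors of the bases form a family of parallel global symmetry axes of the tiling. After the $n$ apex-crossings and a base crossing described above, the continuation of the trajectory is the mirror image of its initial segment across one of these symmetry axes. Iterating this picture, the whole trajectory decomposes into a concatenation of mirrored ``bumps,'' each consisting of $n$ apex-crossings going up, one or two base-crossings across the symmetry axis, and $n$ apex-crossings coming back down. This forces the trajectory to close up (periodic) if the composition of reflections returns to the original line, or to repeat up to a fixed translation (drift-periodic) otherwise.

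For part (2), once the ``bump'' structure is in place, I will simply count: a fundamental block contains $n$ apex-crossings on each side of the symmetry axis plus either one or two base-crossings joining them, giving total period $2n+2$ or $2n+4$. A parity analysis of how the symmetry reflections compose determines which case is periodic and which is drift-periodic: when $n$ is even the product of reflections is a translation (so the longer block $2n+4$ is drift-periodic and the shorter $2n+2$ is the closed case), and when $n$ is odd the parities swap, giving the bounds as stated. The main obstacle will be making the symmetry argument in paragraph three airtight: one must identify precisely which perpendicular-bisector line serves as the axis at each base-crossing and verify that the mirrored continuation really is the actual continuation of the trajectory, which requires a careful case split on whether the base-crossing at the top of a bump is a single edge or a pair (accounting for the $2n+2$ vs $2n+4$ dichotomy) and on the orientation of that base edge relative to the preceding leg-crossings.
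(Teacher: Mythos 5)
Your overall strategy (use the Angle Adding Lemma to bound the number of consecutive leg-to-leg crossings, then invoke a reflection symmetry of the tiling to make the trajectory close up or translate) is the same as the paper's, but you have chosen the wrong symmetry axes, and this breaks the argument. You reflect across the perpendicular bisectors of the bases. These are indeed symmetry axes of the tiling, but the trajectory does not inherit symmetry across them. The trajectory inherits a symmetry only because the tiling-billiards rule at a base edge literally reflects the ray across the \emph{base line} (the full line containing the bases), and the tiling is invariant under that same reflection; hence the forward continuation after a base crossing is the image of the past trajectory under reflection in the base line. Reflection in the perpendicular bisector sends the crossing point $p$ to a different point of the base (unless $p$ happens to be the midpoint), so the ``mirrored continuation'' you describe is not the actual continuation. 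A symptom of the error: composing reflections in two of your parallel perpendicular-bisector axes yields a translation \emph{parallel} to the bases, whereas the actual drift of a drift-periodic trajectory here is \emph{perpendicular} to the base lines, since the orbit is carried forward by $R_{L_2}\circ R_{L_1}$ for two adjacent base lines $L_1,L_2$, a translation by twice the displacement between them. The paper's proof uses the base lines directly: if the trajectory crosses the same base line twice it is periodic; if it crosses two distinct base lines it is drift-periodic; and the Angle Adding Lemma guarantees it must cross some base line, because the angle grows by $\alpha$ at each leg-to-leg step and cannot reach $\pi$.

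A second gap: you assert that the run of apex-crossings ``must continue'' until the angle enters $[\pi-\alpha,\pi)$, i.e., that a trajectory cannot meet a base early. That does not follow: which edge a trajectory exits through depends on where it enters the triangle, not only on its angle, and trajectories that hit a base after fewer than $n$ leg crossings exist --- this is exactly why the theorem asserts only a \emph{maximum} period. The angle argument gives the upper bound of $n$ consecutive leg crossings (one more would force an angle $\geq\pi$), which is all the counts $2n+2$ versus $2n+4$ require; your ``bump'' decomposition would instead wrongly force every trajectory to realize the maximum. Finally, the even/odd dichotomy is not a statement about how your reflections compose: it records whether the final triangle of a maximal run has its base on the far side of the strip (so the trajectory exits through a new base line and is drift-periodic) or on the side it entered through (so it returns to the same base line and is periodic).
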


\begin{proof}
\begin{enumerate}
\item Call each line containing the bases of the isosceles triangles the {\it base line}. Note that an isosceles triangle tiling is reflection-symmetric across the base lines, and therefore so are the trajectories. So if a trajectory crosses the same base line in two places, then it must make a loop upon reflection across that line; the trajectory is periodic (Figure \ref{isosceles}a). If a trajectory crosses two distinct base lines, then upon reflection across either of these lines, the trajectory will hit yet another base line, in the same place and at the same angle on the side of a triangle corresponding to the previous place the trajectory met a base line (Figure \ref{isosceles}b); the trajectory is drift-periodic.

\begin{figure}[h!]
\centering
\includegraphics{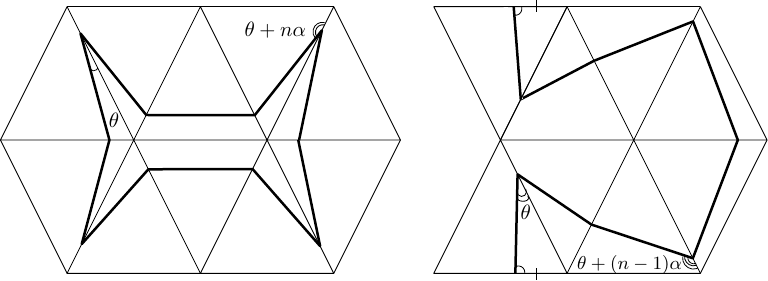}
\caption{Two trajectories in an isosceles triangle tiling: (a) a periodic trajectory and (b) a drift-periodic trajectory. Note that these trajectories are reflection-symmetric across the base line.}
\label{isosceles}
\end{figure}

Furthermore, every trajectory falls into one of these two cases because a trajectory cannot travel indefinitely without crossing a base: each time a trajectory goes from leg to leg in the tiling, by the Angle Adding Lemma $\alpha$ is added to the angle the trajectory makes with the next leg. Eventually, this angle is greater than or equal to $\pi - \alpha$ and the trajectory meets a base. So every trajectory is either periodic or drift-periodic.
\item Note that by the Angle-Adding Lemma, $\theta$ is the smallest angle that the trajectory makes with the edges. By the definition of $n$, $\theta + n\alpha$ is the largest angle a trajectory can make with a leg before the trajectory meets a base line. Suppose that a trajectory meets this maximum number of legs $n$. If $n$ is even, then the trajectory meets another base line after making the angle $\theta + n\alpha$ with a leg and, by the reflection symmetry of the tiling across the base lines, is drift-periodic. By the Angle Adding Lemma and reflection symmetry, a drift-periodic trajectory must have $2n$ points where the trajectory makes the angles $\theta + \alpha, \theta + 2\alpha, \dots, \theta + n\alpha$. Add to this the 4 points when the trajectory is traveling to and from a base edge, and the maximum period of a drift-periodic trajectory is $2n+4$. Similarly, the maximum period of a periodic trajectory is $2(n-1) + 4 = 2n + 2$. If $n$ is odd, then the trajectory meets a base line a second time after making the angle $\theta + n\alpha$ with a leg and is periodic; the maximum period of a drift-periodic trajectory is $2(n-1) + 4 = 2n + 2$ and the maximum period of a periodic trajectory is $2n + 4$.

\end{enumerate}
\end{proof}

Theorem \ref{classification} shows that every trajectory in an isosceles triangle tiling is either periodic or drift-periodic, and we might wonder if all isosceles triangles yield both. No: we know (Corollary \ref{all-triangles}) that every triangle tiling has a periodic trajectory, but Theorem \ref{mf} shows that the equilateral triangle tiling (a special case of an isosceles triangle) does \emph{not} have a drift-periodic trajectory. We conjecture that this is not the only exception:

\begin{conjecture}
An isosceles triangle tiling has an escaping trajectory if and only if its vertex angle is \emph{not}  of the form $\pi/(2n+1)$ for $n\geq 1$.
\end{conjecture}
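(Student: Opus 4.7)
The plan is to analyze each trajectory between consecutive crossings of the base lines of the isosceles tiling, using the Angle Adding Lemma together with the reflection symmetry across every base line (as in the proof of Theorem \ref{iso}). By that theorem, every trajectory is periodic or drift-periodic, so an escaping trajectory is exactly a drift-periodic one; thus the question is exactly for which $\alpha$ at least one drift-periodic trajectory exists. Concretely, I would attach to each trajectory a \emph{drift vector} $D(\theta,\alpha)$ measuring the horizontal displacement between its successive base-line crossings, so that the trajectory is periodic precisely when $D=0$.

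For the forward implication, $\alpha = \pi/(2n+1) \Rightarrow$ no escaping trajectory, I would work in the folded picture of the tiling billiard used in the alternative proof of Theorem \ref{mf}: successively folding the tiling across each edge the trajectory crosses brings all visited triangles on top of a single reference triangle, and the trajectory becomes a straight segment bouncing back and forth inside it. The arithmetic identity $(2n+1)\alpha = \pi$ is exactly what lets $2n+1$ successive leg-folds send the reference triangle back to itself (rotated by $\pi$), so the folded orbit closes up into a finite bouncing pattern. Unpacking, this should show that every trajectory returns to the \emph{same} base line of the original tiling, giving $D \equiv 0$ and hence only periodic trajectories.

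For the reverse implication, $\alpha \ne \pi/(2n+1)$ for every $n$ $\Rightarrow$ an escaping trajectory exists, I would derive a closed-form expression for the horizontal drift $D(\theta,\alpha)$ between successive base-line crossings using Law of Sines computations analogous to those in the proof of Theorem \ref{ek}(b), writing $D$ as a trigonometric sum in $\theta + k\alpha$ for $k = 0, \ldots, n$. The identity $D(\theta,\alpha)\equiv 0$ in $\theta$ is a trigonometric condition on $\alpha$ alone that I expect to hold precisely at the magic angles $\alpha = \pi/(2n+1)$. Outside that discrete set there is an open set of $\theta \in (0,\alpha)$ for which $D \neq 0$, and the corresponding trajectory is drift-periodic, hence escapes.

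The hardest step is the forward implication: rigorously linking the arithmetic identity $(2n+1)\alpha = \pi$ to a geometric symmetry that forces $D = 0$ for \emph{every} $\theta$. The equilateral case $n = 1$ is Theorem \ref{mf}, but for $n \geq 2$ the number of leg bounces per base-to-base segment grows and we must verify that the trigonometric cancellation in the displacement formula genuinely occurs. I expect this cancellation to reduce to a standard identity of the form $\sum_{k} r_k \sin(\theta + k\alpha) = 0$, valid only when $(2n+1)\alpha = \pi$, where the coefficients $r_k$ encode the side lengths of the traversed triangles via the Law of Sines; trajectories that bounce off fewer than the maximum number of legs will require a parallel but separate argument.
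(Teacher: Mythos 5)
First, be aware that this statement is a \emph{conjecture} in the paper: the authors prove only the dichotomy (Theorem \ref{iso}) that every trajectory in an isosceles triangle tiling is periodic or drift-periodic, they establish the equilateral case $\alpha=\pi/3$ via Theorem \ref{mf}, and they explicitly leave the general question open. So there is no proof in the paper to compare yours against, and your proposal must stand on its own. Your opening reduction --- escaping trajectories are exactly the drift-periodic ones, so the question is for which $\alpha$ a drift-periodic trajectory exists --- is correct and matches the paper's framing.

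Beyond that, two of your key steps fail as stated. For the forward direction you want to fold the tiling onto a single reference triangle, as in the alternative proof of Theorem \ref{mf}; that argument requires the tiling to be reflection-symmetric across every edge the trajectory crosses. An isosceles triangle tiling is reflection-symmetric across its base lines, but the two triangles sharing a \emph{leg} are related by a half-turn about the midpoint of that leg, not by a reflection across it (two adjacent tiles are mirror images across their shared edge only when the apex opposite that edge is equidistant from its endpoints, which for a leg happens only in the equilateral case). So folding across a leg does not superimpose the next tile on the reference tile, and the identity $(2n+1)\alpha=\pi$ does not by itself close up the folded picture. Second, your criterion ``periodic precisely when the horizontal displacement $D$ between successive base-line crossings is zero'' is wrong: by the reflection symmetry across base lines, a trajectory that returns to the \emph{same} base line at a \emph{different} point is already periodic --- this is exactly how the period-$6$ orbits of Corollary \ref{all-triangles} arise, where the trajectory crosses the horizontal line through a vertex on both sides of that vertex. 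The correct dichotomy is whether the trajectory returns to the base line it left or reaches a different parallel base line, i.e., whether the net \emph{vertical} displacement of one base-to-base excursion vanishes; and the equilateral example shows this is not controlled by the parity of the number of leg crossings alone, which is precisely the subtlety that makes the statement hard. Finally, the reverse direction rests entirely on an asserted trigonometric identity ($D\equiv 0$ in $\theta$ exactly when $(2n+1)\alpha=\pi$) that you have not derived, and even granting it you would still need to exhibit an admissible starting point and angle avoiding all vertices. As written, the proposal identifies reasonable quantities to study but proves neither implication.
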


\subsection{Right triangle tilings}

For us, a \emph{right triangle tiling} is an edge-to-edge tiling of the Euclidean plane with congruent right triangles with axis-parallel perpendicular edges, such that the hypotenuses are the negative diagonals of the rectangles formed by the perpendicular edges. We refer to a tiling triangle that lies below its hypotenuse as a {\it lower tiling triangle}, and to a tiling triangle that lies above its hypotenuse as an {\it upper tiling triangle}. In any right triangle tiling, $\alpha$ is the smallest angle in the right triangle, and is opposite the horizontal edge. 

\begin{lemma}
\label{unbounded}
In a right triangle tiling, if a trajectory never meets two perpendicular edges in a row, then the trajectory is unbounded.
\end{lemma}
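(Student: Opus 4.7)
My plan is to show that the ``no two consecutive perpendicular edges'' hypothesis forces the crossings to alternate strictly between legs ($L$) and hypotenuses ($P$), and that the induced walk on the lattice of rectangles is strictly monotonic, hence unbounded. First I would observe that every triangle has exactly one hypotenuse, so two consecutive crossings cannot both be hypotenuses; combined with the hypothesis of no consecutive legs, the crossing sequence must alternate $L, P, L, P, \ldots$. Each $P$ crossing is therefore internal to a single rectangle (separating its two triangles), while each $L$ crossing moves the trajectory between adjacent rectangles. Thus every rectangle visited (after the starting one) is entered through a leg, has its hypotenuse crossed exactly once, and is exited through another leg.

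Next I would classify the admissible entry/exit patterns through a single rectangle. The hypotenuse, being the negative diagonal, cuts each rectangle into a lower triangle, whose legs are the bottom and left edges, and an upper triangle, whose legs are the top and right edges. An entry through bottom or left places the trajectory in the lower triangle, so crossing the hypotenuse (the next forced crossing) brings it into the upper triangle, whose only legs are top and right; hence bottom/left entries must exit through top or right. A symmetric argument shows that top/right entries must exit through bottom or left. Translating exits into entries of the neighboring rectangle --- an exit on top becomes the bottom entry of the rectangle above, an exit on the right becomes the left entry of the rectangle to the right, and so on --- one sees that the set $\{\text{bottom}, \text{left}\}$ of entry sides is forward-invariant under the transition, and so is $\{\text{top}, \text{right}\}$.

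Finally I would read off monotonicity. Within the $\{\text{bottom}, \text{left}\}$ regime the only possible exits are top and right, which translate the rectangle's lattice coordinates $(i,j)$ by $(0,+1)$ or $(+1,0)$ respectively; thus $i+j$ strictly increases by $1$ at each $L$ crossing, and the rectangle walk escapes to infinity. The $\{\text{top}, \text{right}\}$ regime yields the symmetric conclusion, with $i+j$ strictly decreasing. The main obstacle is spotting the correct invariant partition of entry sides: it is the diagonal grouping bottom-and-left vs.\ top-and-right, corresponding to which of the two hypotenuse-cut triangles is entered first, rather than the axis grouping horizontal vs.\ vertical, that is preserved by the transition and that produces monotone motion on the rectangle lattice.
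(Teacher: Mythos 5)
Your proof is correct and follows essentially the same route as the paper's: the paper likewise observes that a trajectory entering a lower triangle through a leg must next cross the hypotenuse, pass into the upper triangle, and exit through the top or right edge into another lower triangle, so that it always travels up and to the right (or, symmetrically, down and to the left) and escapes. Your version merely makes explicit the alternation of leg and hypotenuse crossings and the monotone coordinate sum $i+j$ on the rectangle lattice, which the paper leaves informal.
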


\begin{proof}
Consider a trajectory that never meets two perpendicular edges in a row as it passes through a lower tiling triangle, then meets the hypotenuse (Figure \ref{upandright} (a)). The trajectory then passes through the upper tiling triangle and crosses either the top or right edge. Either way, it will enter another lower tiling triangle, meet the hypotenuse, then go up or right once again. So the trajectory always travels up and right (or in the symmetric case, down and left) and escapes to infinity.
\end{proof}

\begin{figure}[h!]
\centering
\includegraphics[height=200pt]{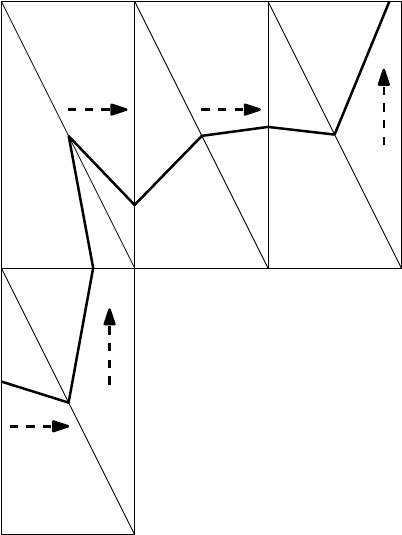} \ \ \ \ \ \ \ \ \ \ \ \ \ \ \ \ 
\includegraphics[height=200pt]{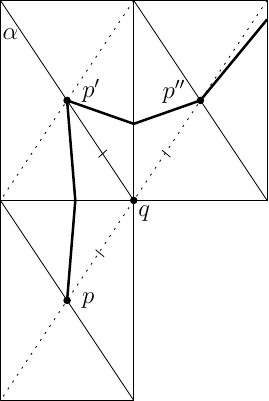}
\caption{(a) A trajectory that never meets two perpendicular edges in a row only travels up and to the right. (b) If a trajectory bisects a hypotenuse, it will bisect every hypotenuse.}
\label{upandright}\label{alwaysbisects}

\end{figure}

\begin{theorem}
\label{bisecting}
Every right triangle tiling has an escaping trajectory.
\end{theorem}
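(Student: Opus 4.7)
The plan is to construct an explicit escaping trajectory, namely one that bisects a hypotenuse (passes through its midpoint). As Figure~\ref{alwaysbisects} suggests, the key claim is that if a trajectory bisects one hypotenuse then it bisects every hypotenuse it meets, with exactly one perpendicular edge crossed in between. Combined with Lemma~\ref{unbounded}, this immediately produces an unbounded trajectory and hence proves the theorem.

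Fix any hypotenuse $H_0$, let $M_0$ be its midpoint, and choose an initial direction $v$ at $M_0$ generic enough that the forward trajectory $\tau$ never meets a vertex. The heart of the argument is the following Bisection Lemma: \emph{if $\tau$ passes through the midpoint $M$ of the hypotenuse of some rectangle $R$, then after crossing exactly one perpendicular edge of $R$, $\tau$ reaches the midpoint $M'$ of the hypotenuse of the neighboring rectangle $R'$.} I plan to prove this by a direct coordinate computation. Placing $R=[0,w]\times[0,h]$ so that $M=(w/2,h/2)$ and letting the outgoing direction from $M$ be $(a,b)$, the half-turn of $R$ about $M$ swaps the upper and lower triangles, so it suffices to treat the case that $\tau$ enters the lower triangle. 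One then handles the two sub-cases (exit through the bottom edge, exit through the left edge) by parallel computations. In the bottom-edge case, the exit point is $B=(w/2-ah/(2b),\,0)$, and the tiling-billiards reflection at $B$ flips the component of $(a,b)$ parallel to the bottom edge, producing the outgoing direction $(-a,b)$; intersecting the resulting ray with the hypotenuse of $R'=[0,w]\times[-h,0]$ (the line $hx+wy=0$), the two equal-and-opposite $x$-offsets $\pm ah/(2b)$ cancel exactly and give the intersection point $(w/2,-h/2)=M'$. The same cancellation occurs in the left-edge sub-case. A quick comparison of intersection parameters shows that the hypotenuse of $R'$ is reached before any other edge of $R'$, so indeed exactly one perpendicular edge separates $M$ from $M'$.

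With the Bisection Lemma in hand, the rest is immediate. By induction, $\tau$ bisects every hypotenuse it meets, and between consecutive hypotenuse crossings it meets exactly one perpendicular edge. The edge sequence of $\tau$ therefore alternates hypotenuse, perpendicular, hypotenuse, perpendicular, and so on, and never contains two consecutive perpendicular edges, so Lemma~\ref{unbounded} applies and yields an unbounded (escaping) trajectory.

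I expect the main obstacle to be the Bisection Lemma itself. The tiling has no clean reflection symmetry through $M$ when $w\neq h$, and the statement really seems to come down to the small algebraic cancellation sketched above rather than a one-line synthetic argument. The genericity hypothesis on $v$ is harmless, since the vertex-hitting directions form a countable (hence measure-zero) subset of directions at $M_0$ and can be avoided.
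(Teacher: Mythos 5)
Your proposal is correct and follows essentially the same route as the paper: construct a trajectory through the midpoint of a hypotenuse, show that it bisects every hypotenuse it meets and hence never crosses two perpendicular edges in a row, and invoke Lemma~\ref{unbounded}. The only difference is that you verify the key bisection step by an explicit coordinate cancellation, whereas the paper gets it in one line by noting that the reflection of the midpoint $p$ across the crossed perpendicular edge is the midpoint of the adjacent hypotenuse, so the reflected ray passes through it by symmetry.
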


\begin{proof}
First, we show that if a trajectory bisects a hypotenuse, then it bisects the hypotenuse of every tiling triangle it enters. Then, we show that any trajectory that bisects a hypotenuse escapes.

Let $p$ be the midpoint of a hypotenuse, $p'$ be the reflection of $p$ across the horizontal edge above $p$, and $p''$ be the reflection of $p'$ across the vertical edge to the right of $p'$ (see Figure \ref{alwaysbisects} (b)). By symmetry, $p'$ is the midpoint of the hypotenuse, and also by symmetry, a trajectory through $p$ that crosses the horizontal edge must pass through $p'$. By a similar argument, $p''$ is the midpoint of the hypotenuse and a trajectory through $p'$ that crosses the vertical edge must pass through $p''$.

Hence any trajectory through a midpoint cannot meet two perpendicular edges in a row; by Lemma \ref{unbounded}, such a trajectory escapes.
\end{proof}

In Theorem \ref{iso}, we showed that in an isosceles triangle tiling, \emph{every} trajectory is periodic or drift-periodic. We now show (Theorem \ref{dper}) that some rational right triangle tilings always have \emph{at least one} drift-periodic trajectory, and we conjecture (Conjecture \ref{everyrightdp}) that this is true for all rational right triangle tilings.

\begin{theorem}
If $\alpha = \frac{\pi}{2n}$ for some $n \in \mathbb{N}$, then a drift-periodic trajectory exists.
\label{dper}
\end{theorem}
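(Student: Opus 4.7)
The plan is to argue that the bisecting trajectory from the proof of Theorem~\ref{bisecting} is drift-periodic (after an initial transient at worst), whenever $\alpha=\pi/(2n)$. Take a trajectory through the midpoint of some hypotenuse, with the initial direction chosen generically so the trajectory avoids the countable set of corners of the tiling. By the argument in the proof of Theorem~\ref{bisecting}, this trajectory bisects every hypotenuse it meets, never meets two perpendicular edges in a row, and escapes monotonically up and to the right.

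The key step is to show that the direction $\phi\in\mathbb{R}/2\pi\mathbb{Z}$ of the trajectory takes only finitely many values. A tiling-billiards reflection across a boundary of angle $\psi$ sends $\phi$ to $2\psi+\pi-\phi$, an involution of the circle. The three boundary angles $0$, $\pi/2$, and $\pi/2+\alpha$ give three such involutions, and the composition of any two is a rotation of the circle by twice the difference of the corresponding $\psi$'s: rotations by $\pi$, $2\alpha$, and $\pi-2\alpha$. When $\alpha=\pi/(2n)$ the greatest common divisor of these rotation angles is $\pi/n$, so the rotations generate a cyclic group of order $2n$, and the full group of transformations acting on $\phi$ is a dihedral group of order $4n$. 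The orbit of any initial direction therefore has size at most $4n$.

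Now apply pigeonhole. Let $\phi_k$ and $P_k$ denote the direction and location of the trajectory at its $k$th hypotenuse crossing. Since the $\phi_k$ lie in a finite set, there exist indices $i<j$ with $\phi_i=\phi_j$. At both crossings the trajectory sits at the midpoint of a hypotenuse, entering an upper triangle in the same direction; since the tiling is translation-invariant and the local geometry at every hypotenuse midpoint is identical, the continuation of the trajectory from the $j$th crossing is the translate by $\mathbf{v}=P_j-P_i$ of the continuation from the $i$th crossing. The midpoints of the hypotenuses form a sublattice of the tiling's full translation lattice, so translation by $\mathbf{v}$ is a symmetry of the tiling. Moreover $\mathbf{v}\neq\mathbf{0}$ because the trajectory moves monotonically up and to the right. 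Hence the trajectory starting at $P_i$ in direction $\phi_i$ is a drift-periodic orbit with drift $\mathbf{v}$, as desired.

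The main obstacle is the finiteness of the group of direction transformations, which rests on $\pi/2$ being an integer multiple of $\alpha$ --- exactly the hypothesis $\alpha=\pi/(2n)$. Once this is in place, the pigeonhole step and the identification of $\mathbf{v}$ with a translational symmetry of the tiling are routine. The authors may well prefer an explicit construction of a particular short drift-periodic trajectory (which would also give its period), but the existence statement follows cleanly from the finite-orbit argument above.
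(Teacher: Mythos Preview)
Your argument is correct and takes a genuinely different route from the paper's. The paper gives an explicit construction: it starts a trajectory perpendicular to the short leg at its midpoint, so that it meets the first hypotenuse at angle $\alpha$; repeated use of the Angle Adding Lemma shows that after a fixed number of steps the angle with a hypotenuse is $(2n-1)\alpha=\pi-\alpha$, and since every hypotenuse is bisected (Theorem~\ref{bisecting}), the trajectory again perpendicularly bisects a short leg. This gives a concrete drift-periodic orbit together with its combinatorics. Your proof instead observes that the three edge directions generate a dihedral group of order $4n$ acting on the trajectory direction precisely when $\alpha=\pi/(2n)$, and then combines the midpoint property of Theorem~\ref{bisecting} with pigeonhole to force a repeat of the pair (midpoint, direction) up to a lattice translation.

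What each approach buys: the paper's construction is shorter, pins down a specific orbit, and makes the period visible. Your argument is more structural---it explains exactly where the hypothesis $\alpha=\pi/(2n)$ enters (it is equivalent to finiteness of the direction orbit), and it shows that \emph{every} bisecting trajectory is drift-periodic, not just one. Two small remarks: the ``initial transient'' caveat is unnecessary, since once $\phi_i=\phi_j$ with $P_j-P_i$ a lattice vector, determinism in both time directions forces the entire bi-infinite orbit to be drift-periodic; and the hypotenuse midpoints form a coset of the translation lattice rather than a sublattice, though you only use that their differences are lattice vectors, which is correct.
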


\begin{proof}
Construct a trajectory that perpendicularly bisects the short leg of a right tiling triangle (Figure \ref{rotationsymm}). The trajectory bisects the hypotenuse of the triangle, meeting it at angle $\alpha$. By the Angle Adding Lemma, there exists a hypotenuse that the trajectory meets at angle $(2n-1)\alpha = \pi - \alpha$. Since the trajectory bisects a hypotenuse, by Proposition \ref{bisecting}, the trajectory bisects every hypotenuse it meets, so the trajectory perpendicularly bisects the short leg of the upper triangle whose hypotenuse the trajectory meets at angle $(2n-1)\alpha$. Since the trajectory meets an edge at a corresponding point and at the same angle as where it started, the trajectory is drift-periodic. 
\end{proof}

Note that the trajectory will also bisect the long leg of a right tiling triangle so we could also begin our construction there, replacing $\alpha$ with $\frac{\pi}{2} - \alpha$ and following the same argument.

\begin{figure}[h!]
\centering
\includegraphics[scale=.70]{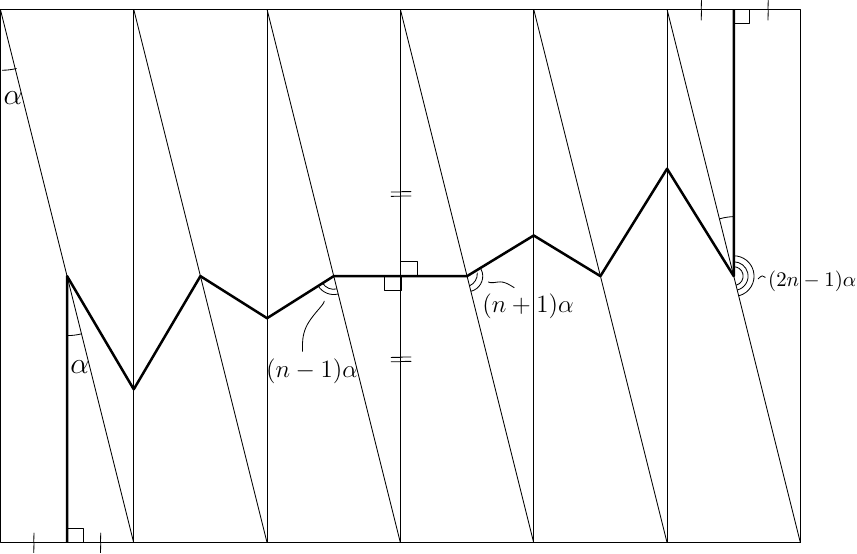}
\caption{A trajectory that perpendicularly bisects the legs of a right tiling triangle must be drift-periodic if $\alpha = \frac{\pi}{2n}$, $n \in \mathbb{N}$.}
\label{rotationsymm}
\end{figure}

In fact, our experiments suggest that Theorem \ref{dper} holds for all rational right triangles:

\begin{conjecture} \label{everyrightdp}
Every rational 
right triangle tiling has a drift-periodic trajectory.
\end{conjecture}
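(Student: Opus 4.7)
My plan is to extend Theorem \ref{dper} by combining the Angle Adding Lemma with the hypotenuse-bisection property established in the proof of Theorem \ref{bisecting}. The argument for Theorem \ref{dper} worked because for $\alpha = \pi/(2n)$, the value $(2n-1)\alpha = \pi - \alpha$ gives a symmetric return to a perpendicular-to-leg configuration. For general rational $\alpha = p\pi/q$, the strategy is to start a candidate trajectory perpendicular to a short leg, let it bisect every subsequent hypotenuse it meets, and then find a positive integer $N$ such that after $N$ refractions the trajectory re-enters a configuration identical to its start up to a lattice translation of the tiling.

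First, I would formalize a return map along hypotenuse crossings. A trajectory that bisects one hypotenuse meets the next hypotenuse at a position determined entirely by how many leg-to-leg transitions occurred in between and on which pair of legs. By the Angle Adding Lemma, the entry angle at each hypotenuse crossing is a deterministic function of the previous entry angle; by the bisection-preservation argument from Theorem \ref{bisecting}, the position is always a midpoint of a hypotenuse, hence lies in a discrete set indexed by the translation lattice. When $\alpha \in \mathbb{Q}\pi$ the reachable angles lie in a finite set, so the symbolic itinerary (the sequence of upper/lower tiles and types of edges visited) is eventually periodic.

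Second, I would analyze the holonomy of one symbolic period as a plane isometry. Because the trajectory has a reflection-balanced itinerary and preserves bisection, this isometry is either a pure translation, in which case drift-periodicity follows, or a rotation (or glide-reflection), in which case I would search for a choice of starting leg, or an alternative symmetric starting trajectory (for instance one perpendicular to the long leg, using $\pi/2 - \alpha$ in place of $\alpha$), that converts the holonomy into a translation. The above/below midpoint switching lemma preceding Theorem \ref{dper} should provide the key accounting: each switch from below-midpoint to above-midpoint corresponds to a specific isometry flip, and matching the number of switches over one symbolic period against the parity of the itinerary should force the holonomy to be orientation-preserving and fixed-point-free.

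The principal obstacle is controlling this holonomy in the general rational case: the clean symmetry that makes Theorem \ref{dper} work does not straightforwardly generalize once $p > 1$ in $\alpha = p\pi/q$, because the return to the exact angle $\pi - \alpha$ is no longer automatic. I expect a detailed case analysis based on the parities of $p$ and $q$, combined with the midpoint-switching behavior, will be needed. The irrational case, which the literal wording of the conjecture may include, is a substantially separate challenge; there the symbolic itinerary is no longer eventually periodic, and one would likely need recurrence in an induced interval-exchange transformation on the space of (angle, midpoint-offset) pairs, or an ergodic-theoretic argument, to produce a drift-periodic trajectory.
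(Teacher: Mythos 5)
This statement is stated in the paper as a \emph{conjecture}: the paper proves only the special case $\alpha = \pi/(2n)$ (Theorem \ref{dper}) and explicitly leaves the general case open, so there is no proof of the paper's to compare yours against. Your proposal, by its own admission, is a strategy outline rather than a proof: the decisive steps (``a detailed case analysis \ldots will be needed,'' the irrational case being ``a substantially separate challenge'') are deferred, so as written it does not establish the conjecture. That said, the skeleton of your rational-angle argument is closer to working than you seem to believe, and the place where you lose the thread is identifiable. Once you start the trajectory bisecting a hypotenuse, Theorem \ref{bisecting} gives you that it bisects \emph{every} hypotenuse it meets and, via Lemma \ref{unbounded}, that it drifts monotonically up-and-right without revisiting any tile. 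For $\alpha \in \pi\mathbb{Q}$ the direction of the trajectory lives in a finite set (the orbit of the initial direction under the finite dihedral group generated by reflections in the three edge directions), so by pigeonhole the trajectory crosses two distinct hypotenuses at their midpoints travelling in the same direction. Since in this tiling all hypotenuses are parallel, congruent, and related by the translation lattice of the tiling, the translation carrying the first crossing configuration to the second is a symmetry of the tiling that carries the forward trajectory to itself; this is exactly drift-periodicity, with no need for your ``holonomy'' case analysis distinguishing translations from rotations and glide-reflections. That middle portion of your plan is a detour: the recurrence of the direction at a midpoint already forces the return isometry to be a nontrivial translation.

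The genuine gaps that remain are these. First, you never rule out the trajectory hitting a vertex, at which point it is undefined; the bisection property protects the hypotenuse crossings (midpoints are not vertices) but not the crossings of the legs, and for special rational $\alpha$ a leg crossing could in principle land on a lattice point. This needs an argument (or at least a perturbation of the starting leg-crossing point, which the bisection construction does not obviously permit). Second, your claim that ``the reachable angles lie in a finite set, so the symbolic itinerary is eventually periodic'' is a non sequitur as stated --- finiteness of angles alone does not control the positions --- and it is only rescued by the midpoint property; you should make that dependence explicit. Third, and most importantly, the conjecture as stated covers \emph{all} right triangle tilings, including irrational $\alpha$, and there your proposal offers only the hope of an ergodic or interval-exchange argument with no indication of how recurrence of an (angle, offset) pair would produce an exact translation symmetry of the trajectory rather than mere approximate recurrence. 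A correctly executed version of your first two paragraphs would prove the rational case, which would be a real extension of Theorem \ref{dper}, but it would still not settle the conjecture.
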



Many questions remain open regarding periodic orbits of right triangle tilings. During computer experimentation, we noticed that there appears to be a bound on the period of an orbit contained in a single row of a tiling (Figure \ref{1row}). All periodic orbits observed with larger periods crossed more than one row and remained in each row for the same number of refractions ($\pm 2$) as occur in the maximum one-row periodic orbit. However, not every possible number of rows was crossed in a periodic orbit. We would like to find a rule for how many rows will be crossed in a periodic orbit, and determine  if there is a bound on the period in a right triangle tiling.



\begin{figure}[h!]
\centering
\includegraphics[scale=0.3]{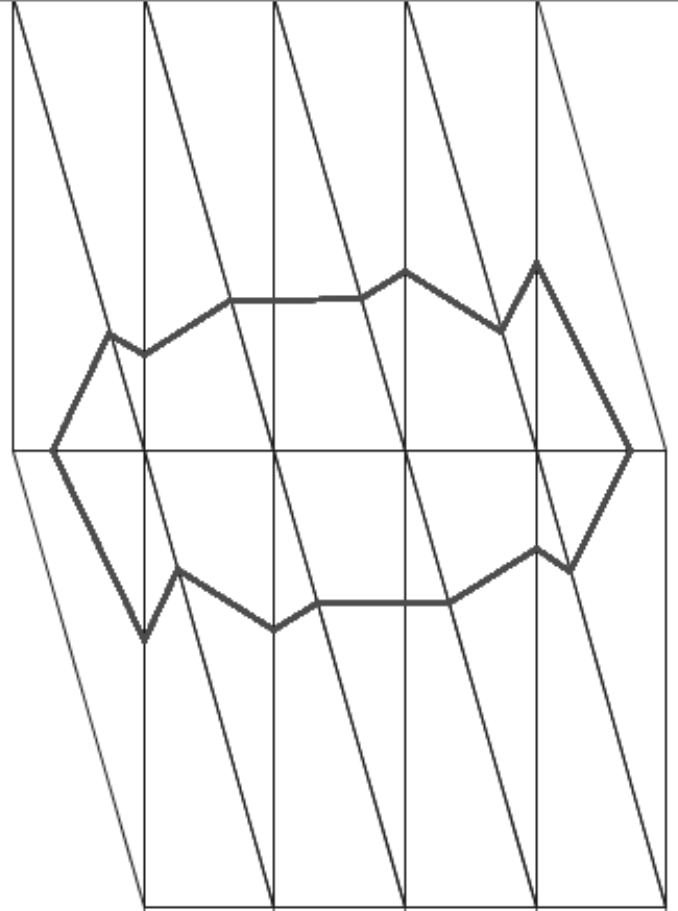} \hspace{0.15in}
 \includegraphics[width=165pt]{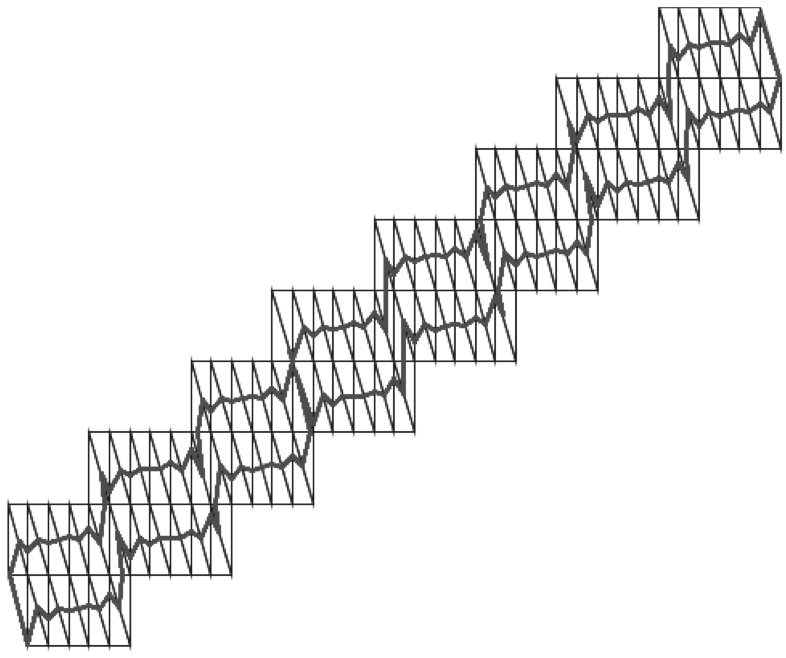}
\caption{(a) The largest periodic orbit contained in a single row. \label{1row} (b) A longer periodic orbit. Note that each row resembles the one-row periodic orbit. \label{manyrow}}
\end{figure}

\subsection{Periodic trajectories on general triangle tilings}

\begin{theorem} \label{per10}
Every triangle tiling, except tilings of isosceles triangles with vertex angle greater than or equal to $\frac{\pi}{3}$, has a periodic trajectory with period 10.
\end{theorem}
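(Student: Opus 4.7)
The plan is to construct a single periodic orbit encircling two adjacent vertices of the tiling and use the symmetries of the triangle tiling to force it to close after exactly $10$ reflections. Fix adjacent vertices $V_1,V_2$ sharing an edge $e$ with midpoint $M$. The six triangles meeting at $V_1$ together with the six meeting at $V_2$ overlap in the two triangles adjacent to $e$, so their union $R$ consists of $10$ triangles; moreover $R$ is invariant under the half-turn $\rho_M$ about $M$, which swaps $V_1$ and $V_2$ and pairs up the remaining triangles and their boundary edges.

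I will construct a candidate trajectory $\tau$ that passes through $M$ and is invariant under $\rho_M$. Starting on an outer edge of $R$ with a chosen initial angle, $\tau$ reflects five times along edges around $V_1$, then crosses $e$ exactly at $M$, and by $\rho_M$-invariance executes $5$ more symmetric reflections around $V_2$ before returning to its starting configuration; the total period is $10$. At each reflection the Angle Adding Lemma expresses the new angle with an edge in terms of the previous angle and one of the tiling angles $\alpha,\beta,\gamma$. Tracking the $5$ successive angle increments along the half-orbit and demanding that $\tau$ be directed toward $M$ at the mid-crossing yields a single linear equation in the initial angle $\theta$; that equation is solvable because $\alpha+\beta+\gamma=\pi$ makes the coefficients consistent. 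The length (metric) condition that successive reflections occur at corresponding points under $\rho_M$ is automatic from the symmetry, exactly as the analogous closure is automatic in Theorem \ref{ek}(b) via a Law of Sines argument.

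The construction is valid as long as the candidate trajectory actually stays inside $R$ during its five pre-crossing reflections and does not pass through a vertex of the tiling. It can fail in only two ways: by exiting $R$ across an outer edge that belongs to a triangle not incident to $V_1$ or $V_2$, or by striking a vertex (where the trajectory is undefined). A case analysis on the three possible cyclic orderings in which $\tau$ can meet angles of types $\alpha$, $\beta$, $\gamma$ along its half-orbit reduces the exit/vertex condition to an inequality among the angles of the tiling triangle; this inequality simplifies to the statement that two of the angles are equal and at least as large as the third, i.e.\ that the tiling is isosceles with vertex angle $\geq\pi/3$. The degenerate sub-case $\alpha=\beta=\gamma=\pi/3$ is consistent with Theorem \ref{mf}.

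The main obstacle is the bookkeeping in the second paragraph and the case analysis in the third: one must, for each possible starting outer edge of $R$ and each assignment of $\{\alpha,\beta,\gamma\}$ to the five reflection events, compute precisely where the trajectory strikes the next edge, verify that it stays in $R$, and then recognize that the combined non-exit and non-vertex conditions coalesce into the single exclusion ``isosceles with vertex angle $\geq\pi/3$.'' The angle closure itself is forced by $\rho_M$-symmetry and is not the hard part.
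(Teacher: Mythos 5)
Your high-level target is the same as the paper's: a period-$10$ orbit encircling two adjacent vertices $V_1,V_2$, with the exceptional isosceles tilings arising from the requirement that the orbit close up inside the ten-triangle region without hitting a vertex. But the mechanism you propose does not work. A period-$10$ trajectory around $V_1$ and $V_2$ makes exactly ten crossings, one through each of the ten edges incident to $V_1$ or $V_2$ \emph{other than} the shared edge $e$; it is a simple closed curve enclosing $V_1$, $V_2$ and $e$, and it never crosses $e$. So the step ``crosses $e$ exactly at $M$'' does not occur; your count of five reflections, then a crossing at $M$, then five more gives eleven reflections, which is odd and hence impossible for a closed orbit in a two-colorable tiling (any orbit crossing $e$ at all is forced into a figure-eight of period $12$). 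Worse, the two requirements you impose on $\tau$ are mutually contradictory: a closed curve invariant under the half-turn $\rho_M$ cannot pass through $M$, since $\rho_M$ restricted to such a curve would be an involution of a circle with exactly one fixed point. The $\rho_M$-symmetry idea could in principle replace the paper's Law of Sines check (an affine involution of the convex parameter region of candidate orbits has a fixed point), but you would need to set it up without the crossing at $M$, and you do not prove that a symmetric representative exists.

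The second gap is that you locate the content of the theorem in the wrong place. There is no ``single linear equation'' determining $\theta$: the composition of the ten reflections in the relevant lines is the identity, so the angular closure holds for an open set of directions, and the starting point $l$ is likewise free in an interval. All of the work is in the ``case analysis'' you defer: the paper writes each of the ten angles the candidate orbit makes with the edges as an explicit linear expression in $\theta,\alpha,\beta$, demands each lie strictly in $(0,\pi)$, and reduces the resulting eight inequalities to $\beta+2\alpha<\pi$, $\alpha<\pi/3$, $\alpha<\beta$, whose complement among genuine triangles is exactly the isosceles tilings with vertex angle $\geq\pi/3$. Your stated outcome of that analysis is also reversed: the excluded tilings are those where the two \emph{equal} angles are at most (not at least) as large as the third, since vertex angle $\geq\pi/3$ forces base angles $\leq\pi/3$. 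As written, the proposal asserts the conclusion of the hard step, gets its statement backwards, and builds the easy step on a trajectory that cannot exist.
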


\begin{proof}

Let $\alpha$, $\beta$ be two angles of the tiling triangle and $\theta$ be the initial angle the trajectory makes with the side of the tiling triangle between $\alpha$ and $\beta$, on the $\alpha$ side of the trajectory. In Figure \ref{10periodic}, we see a ten-periodic trajectory for a generic triangle tiling. If each of the labeled angles is between $0$ and $\pi$, then there exists a trajectory around two groups of intersecting lines making the angles $\alpha$, $\beta$, and $\pi - \alpha - \beta$. When we add edges so that these intersecting lines become a tiling of triangles with angles $\alpha$, $\beta$, and $\pi - \alpha - \beta$, we can shrink the periodic trajectory with period $10$ so that it fits within the bounds of the triangles. 

\begin{figure}[h!]
\centering
\includegraphics[scale=1.25]{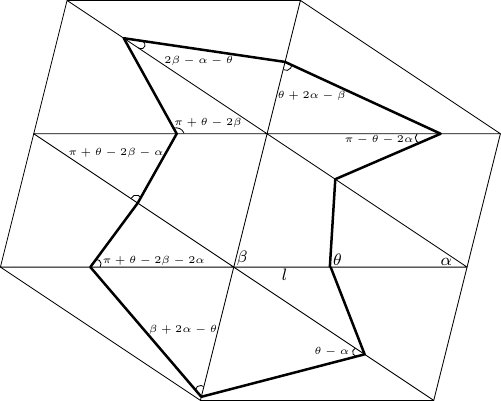}
\caption{If each of the labeled angles is positive, then this ten-periodic trajectory exists.}
\label{10periodic}
\end{figure}

If we start with a system of inequalities based on every angle in the trajectory being positive, we can reduce it to the following system, which implies that a ten-periodic trajectory exists:


$\begin{cases}
0&<\theta\\
\beta-2\alpha&<\theta\\
2\beta + 2\alpha - \pi&<\theta\\
\alpha &< \theta 
\end{cases}$ \hspace{1in}
$\begin{cases}
\theta < \pi \\
\theta < \pi-2\alpha\\
\theta < 2\beta-\alpha\\
\theta < \beta + 2\alpha. 
\end{cases}$

By combining each inequality on the left with each inequality on the right, we can reduce the system to the following three inequalities that depend only on $\alpha$ and $\beta$, and not on $\theta$: 

$\begin{cases}
\pi &> \beta + 2\alpha \\
\frac{\pi}{3} &> \alpha \\
\beta &> \alpha.
\end{cases}$


We graph these in Figure \ref{inequalities}. The region of values of $\alpha$ and $\beta$ where there exists a periodic trajectory of period $10$ is shaded dark gray, and does not contain its boundaries. Since this region contains all 
scalene triangles,
every scalene triangle tiling has a periodic trajectory of period $10$. However, this is not so for every isosceles triangle tiling. Isosceles triangles lie on the lines $\alpha = \beta$, $\pi = 2\alpha + \beta$ (both dashed), and $\pi = 2\beta + \alpha$ (dotted). The two dashed  lines coincide with the boundaries of the region where there exists a periodic trajectory of period $10$. The third and dotted line is contained in the region of acceptable values when $\pi/3 < \beta < \pi/2$ and $\alpha < \pi/3$. In other words, an isosceles triangle only has a ten-periodic trajectory if its vertex angle is less than $\frac{\pi}{3}$, and a base line of the tiling must cross the interior of the periodic trajectory of period $10$.

\begin{figure}[h!]
\centering
\includegraphics[width=300pt]{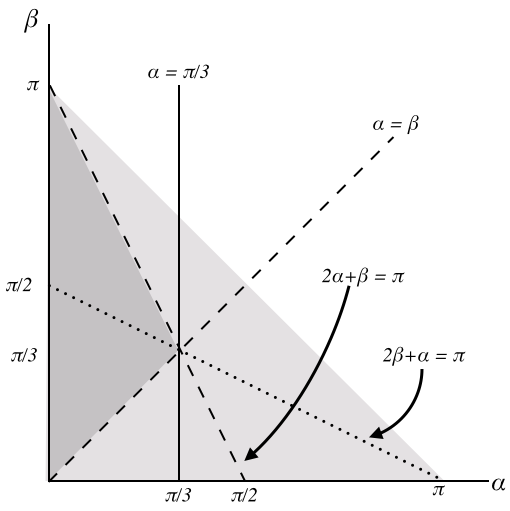}
\caption{The system of inequalities $\pi > \beta + 2\alpha, \pi > 2\beta + \alpha$, and $\beta > \alpha$. The (open) gray region represents all possible triangles. The (open) dark gray region represents all triangles with a period-$10$ trajectory. The dashed and dotted lines represent isosceles triangles. The portion of the dotted line \mbox{$\pi = 2\beta + \alpha$} in the dark gray region represents the set of $\alpha$, $\beta$ where an isosceles triangle has a $10$-periodic trajectory.}
\label{inequalities}
\end{figure}

The above shows that the trajectory returns to the same angle. We can show that the location (distance along the edge) is also the same via repeated application of the Law of Sines.


We can construct a ten-periodic trajectory by choosing a value for $l$, the distance of the trajectory from the vertex of angle $\beta$ along the side between $\alpha$ and $\beta$ where the trajectory makes the angle $\theta$ (Figure \ref{10periodic}), such that our system of inequalities holds.
%

For example, if $\alpha = \frac{\pi}{5}$ and $\beta = \frac{3\pi}{10}$, then $\theta$ must be between $\frac{\pi}{5}$ and $\frac{2\pi}{5}$. Let $\theta = \frac{3\pi}{10}$. Then $0 < l < \frac2{3+\sqrt5}$, when the edge of the tiling triangle between angles $\alpha$ and $\beta$ has length 1.


\end{proof}


\begin{conjecture}
There exist triangle tilings with periodic trajectories of arbitrarily large length.
\end{conjecture}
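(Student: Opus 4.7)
The plan is to deduce the conjecture directly from Theorem \ref{iso} by exhibiting a family $\{T_m\}_{m \geq 1}$ of isosceles triangle tilings such that $T_m$ admits a periodic trajectory of period $4m + 2$. Since $4m + 2 \to \infty$, this gives periodic trajectories of arbitrarily large length, matching the most natural reading of the conjecture.

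Concretely, fix $m \in \mathbb{N}$ and let $T_m$ be the isosceles triangle tiling with vertex angle $\alpha = \pi/(2m)$ (such a tiling exists because any triangle generates one by pairing it with its $180^{\circ}$ rotate to form a parallelogram with parallel diagonals). I would start a trajectory $\tau$ making initial angle $\theta$ with one of the congruent legs, choosing $0 < \theta < \alpha$. Substituting $\alpha = \pi/(2m)$ into the inequality $\pi - \alpha \leq \theta + n\alpha < \pi$ that defines the integer $n$ in Theorem \ref{iso} shows that $n = 2m - 1$, which is odd. By the Angle Adding Lemma, $\tau$ meets legs at the successive angles $\theta + \alpha, \theta + 2\alpha, \ldots, \theta + n\alpha$, after which it hits a base line. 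The proof of Theorem \ref{iso} then says that because $n$ is odd, the trajectory crosses the \emph{same} base line a second time and closes up, giving a \emph{periodic} orbit of period $2n + 4 = 4m + 2$.

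The step that needs careful justification is that $\tau$ is genuinely periodic rather than drift-periodic; this is precisely the role of the parity of $n$ in Theorem \ref{iso}, and it is automatic once we force $n$ to be odd by the choice $\alpha = \pi/(2m)$ with $\theta < \alpha$. The period count $2n + 4$ is then routine: the $n$ legs are crossed once on each side of the symmetry axis (contributing $2n$ refractions), and the trajectory crosses two base edges per side (contributing $4$ more). If instead the conjecture is intended in the stronger sense of a \emph{single} triangle tiling admitting periodic trajectories of unbounded period, the above approach is insufficient because Theorem \ref{iso} caps all periods inside any given isosceles tiling. A natural alternate strategy for the stronger statement would be to generalize Theorem \ref{per10} to period-$(4k+2)$ trajectories circling $k$ consecutive vertices in a row within one fixed scalene tiling; there the main obstacle would be showing that the $4k + 2$ angle inequalities generalizing those in Theorem \ref{per10}, together with the corresponding Law of Sines closure condition, remain jointly satisfiable as $k \to \infty$ for some fixed choice of $\alpha$ and $\beta$.
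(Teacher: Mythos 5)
First, a point of calibration: the paper does not prove this statement --- it is stated as a conjecture, supported only by a single computed example (a period-$34$ trajectory in the $8^{\circ}$-$79^{\circ}$-$93^{\circ}$ tiling) and by the surrounding discussion, which explicitly lists the question of which tilings admit period-$(4n+2)$ trajectories as open. So there is no proof in the paper to compare yours against; your proposal has to stand on its own.

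Your strategy is reasonable and the arithmetic is correct: for vertex angle $\alpha=\pi/(2m)$ and $0<\theta<\alpha$, the integer $n$ of Theorem \ref{iso} is indeed $2m-1$, which is odd, and $2n+4=4m+2$. The gap is in the step ``$\tau$ meets legs at the successive angles $\theta+\alpha,\dots,\theta+n\alpha$, after which it hits a base line.'' Theorem \ref{iso} only asserts that $2n+4$ is the \emph{maximum} possible period; its proof begins ``Suppose that a trajectory meets this maximum number of legs $n$,'' and nothing in the theorem guarantees that any trajectory actually does. The Angle Adding Lemma controls what happens \emph{if} the trajectory passes from one leg of the apex angle to the other, but whether it does so --- rather than exiting the triangle through the base after only $k<n$ leg crossings --- is a condition on the starting \emph{position}, not just on $\theta$. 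You never choose a starting point or argue that some choice realizes all $n$ leg crossings. That attainment is not automatic is demonstrated inside the paper itself: for the equilateral tiling, Theorem \ref{iso} gives a drift-periodic maximum of $2n+4=8$, yet by Theorem \ref{mf} the equilateral tiling has no drift-periodic trajectories at all. So ``the maximum period is $P$'' cannot be upgraded to ``a period-$P$ trajectory exists'' without further work. For $m=2$ your claim is rescued by Theorem \ref{per10} (vertex angle $\pi/4<\pi/3$ gives a period-$10$ trajectory), but for $m\geq 3$ you need period $4m+2\geq 14$, which no result in the paper supplies.

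To close the gap you would need an explicit construction or a positional argument: for instance, exhibit a starting point on a leg (or use an intermediate-value argument over the one-parameter family of parallel trajectories in a fixed direction $\theta$) showing that some trajectory crosses all $n=2m-1$ legs of a strip before returning to the base line it came from, while also avoiding vertices. Your closing remarks are sound: you correctly observe that this approach cannot give the stronger reading of the conjecture (unbounded periods within a \emph{single} tiling), since Theorem \ref{iso} caps all periods in any fixed isosceles tiling, and your suggested generalization of Theorem \ref{per10} is the natural route to that stronger statement --- but as written, neither version is yet proved.
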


For example, the scalene triangle tiling in  Figure \ref{34periodic} has a periodic trajectory of period $34$. In fact, all the periods we have observed follow a pattern:

\begin{figure}[h!]
\centering
\includegraphics[scale=0.45]{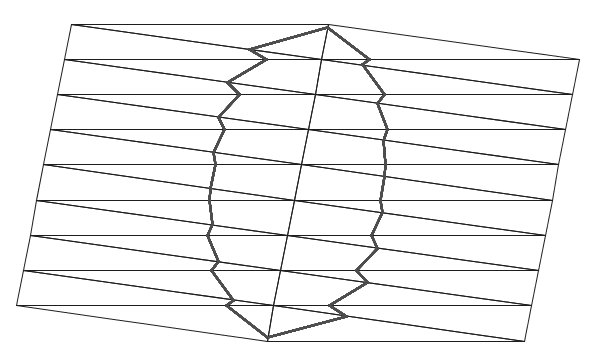}
\caption{The triangle tiling with angles $8^{\circ}, 79^{\circ},$ and $93^{\circ}$ and a trajectory of period $34$.}
\label{34periodic}
\end{figure}

\begin{conjecture} \label{fournplustwo}
In a triangle tiling, every periodic trajectory has a period of the form $4n+2$, for $n\geq 1$.
\end{conjecture}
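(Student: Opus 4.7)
The approach is to show separately that $p$ is even and that $p/2$ is odd, which together give $p \equiv 2 \pmod 4$.

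The first step is immediate from the 2-coloring of the tiling: partitioning each parallelogram into its upper and lower triangle gives a proper 2-coloring of the tiling in which every edge separates triangles of different colors. Each refraction toggles the color of the triangle containing the trajectory, so a periodic trajectory requires an even number of refractions. Write $p = 2m$.

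For the second step, I would first use the refraction recursion $\psi_{i+1} = 2\phi_i + \pi - \psi_i$, which iterates to $\psi_{2k} = \psi_0 - 2\sum_{j=0}^{2k-1}(-1)^j\phi_j$. The direction-closure condition $\psi_p = \psi_0$ is thus $\sum_{i=0}^{p-1}(-1)^i\phi_i \equiv 0 \pmod\pi$, where $\phi_i \in \{\phi_A, \phi_B, \phi_C\}$. For a generic triangle tiling (with $\phi_A, \phi_B, \phi_C$ having no $\mathbb{Q}$-linear relation modulo $\pi$), this forces the counts $a, b, c$ of edges of each type to all be even, so $m = (a+b+c)/2$ is an integer. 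The key remaining task is to rule out even $m$.

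I would try a symmetry argument. Viewing the trajectory as a walk on the $\mathbb{Z}^2$ lattice of upper triangles, each $U \to L \to U$ double-step has displacement in $\{\pm(1, 0), \pm(0, 1), \pm(1, 1)\}$---six vectors forming three antipodal pairs, corresponding to the three unordered edge-type pairs $\{A,B\}, \{A,C\}, \{B,C\}$. Position closure is the equation $\sum\Delta_i = 0$ in $\mathbb{Z}^2$. I would attempt to show that every periodic trajectory is invariant under a half-turn of the tiling (centered at a vertex, edge midpoint, or parallelogram center), so that the $m$ double-steps pair with their antipodes; combined with the two closure equations, this should force $m$ to be odd.

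The main obstacle is establishing the half-turn symmetry for an arbitrary periodic trajectory; the explicit periodic orbits in the paper (Corollary \ref{all-triangles}, Theorem \ref{per10}, and Figure \ref{34periodic}) are visibly half-turn symmetric by construction, but for a hypothetical period-$4k$ orbit one must derive a contradiction without assuming this symmetry a priori. An alternative route I would pursue in parallel is to compute the turning number $W$ of the trajectory via $\sum_{i=0}^{p-1}(\pi - 2\theta_i) = 2\pi W$, where $\theta_i$ is the angle between the trajectory and the $i$-th edge. Since summing the refraction identity $\psi_i + \psi_{i-1} \equiv 2\phi_i + \pi \pmod{2\pi}$ gives $\sum\theta_i \equiv p\pi/2 \pmod\pi$, we obtain $p/2 - W \in \mathbb{Z}$, so $m$ and $W$ have the same parity. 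It would then suffice to show that $W$ must always be odd, which would require a careful analysis of how the trajectory winds around the enclosed lattice vertices; this is the technical heart of the proof.
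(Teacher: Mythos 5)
This statement is a \emph{conjecture} in the paper: the authors offer no proof of it, and in the Future Directions section they list it among the open questions about triangle tilings. So there is no proof of record to compare yours against; the only question is whether your proposal actually closes the problem, and it does not. Your first step is sound: a triangle tiling is two-colorable (the paper notes that any division of the plane by lines is), each refraction toggles the color, and hence the period $p$ is even. But the substance of the conjecture is the mod-$4$ refinement, i.e., that $p/2$ is odd, and there you offer two candidate reductions (half-turn symmetry of every periodic orbit, or oddness of the turning number $W$) while explicitly conceding that you cannot establish either one. A reduction of one open statement to another open statement is not a proof, and you identify the unproven step yourself as ``the technical heart of the proof.''

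There is also a secondary gap in the intermediate step: your argument that the counts $a$, $b$, $c$ of crossings of each edge type are all even invokes $\mathbb{Q}$-linear independence of the angles modulo $\pi$, which only covers generic tilings. The conjecture is stated for all triangle tilings, including rational ones such as the $(8^{\circ},79^{\circ},93^{\circ})$ tiling of Figure \ref{34periodic} (whose period-$34$ orbit is consistent with the conjecture but not reached by a genericity argument). One would need a perturbation or continuity argument to transfer conclusions from generic to degenerate parameters, and since periodic orbits in these tilings are stable under perturbation of the triangle only in a sense the paper does not make precise for this purpose, that transfer is itself nontrivial. In short: the skeleton is reasonable and the evenness half is correct, but the conjecture remains open after your attempt, just as it is in the paper.
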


In Corollary \ref{periodic_unstable}, we showed that for a division of the plane by an odd number of lines, a trajectory can spiral. We do not observe this behavior on triangle tilings:

\begin{conjecture} \label{trinospiral}
Trajectories on triangle tilings never spiral.
\end{conjecture}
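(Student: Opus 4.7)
The plan is to exploit the translational symmetry of the triangle tiling, together with the reflection-composition analysis underlying Proposition \ref{periodic_unstable}, to show that any mechanism that could produce a spiral is forced to resolve into either a periodic or drift-periodic trajectory. The key intuition is that in the single-vertex three-line case, a spiral arises because under perturbation the composition of reflections around the central vertex acquires a nonzero translational component while still preserving direction; in a triangle tiling, I expect any such translation is forced to lie in the translation lattice $\Lambda$ of the tiling, yielding drift-periodicity rather than a true spiral.

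First, I would represent the trajectory via compositions of reflections: after $N$ refractions across tiling lines $L_1, \ldots, L_N$, write the cumulative isometry as $T_N = R_{L_N} \circ \cdots \circ R_{L_1}$. Since only three line directions appear in a triangle tiling, the linear part of $T_N$ is a word of length $N$ in the three linear reflections $R_A, R_B, R_C$, and this word equals the identity only for certain ``balanced'' patterns---for example, the pattern $C,B,A,C,B,A$ that appears in a loop around a vertex, which satisfies $(R_A R_B R_C)^2 = \mathrm{id}$ because the linear part of $R_A R_B R_C$ is always a reflection and hence an involution. If the trajectory is to return to its original direction after $N$ refractions---the condition that would be needed even to attempt a spiral loop---then $T_N$ must be a direct isometry with trivial linear part, hence a pure translation by some vector $v \in \mathbb{R}^2$.

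The central step is to show that $v$ always lies in $\Lambda$. The intuition is that the tiling lines in each family sit at positions forming a uniform arithmetic progression whose spacing is compatible with $\Lambda$, so reflections across two parallel tiling lines contribute translations in $\Lambda$; and when the non-parallel reflections combine in a way that cancels out their rotational contributions (as required for the linear part to be the identity), the residual translation inherits only the lattice-compatible translational parts of its constituents. Once $v \in \Lambda$ is established, the conclusion is immediate: $v = 0$ gives periodicity, while nonzero $v$ gives drift-periodicity by the lattice vector $v$, so no spiral is possible.

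I expect the hardest part to be this lattice-vector identification, because one must track how the translational part of each reflection is transformed by the intervening rotations as the composition builds up, and verify that all such transformations preserve $\Lambda$. A more conceptual alternative is to adapt the folding trick from the alternative proof of Theorem \ref{mf}, viewing the trajectory as a straight line in a suitable reflection cover of the plane; the challenge there is that a generic scalene triangle tiling is not reflection-symmetric across its own edges, so the ``cover'' is not an honest covering space of the plane and its deck-transformation structure would need to be analyzed with care.
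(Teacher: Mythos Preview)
The statement you are attempting to prove is listed in the paper as a \emph{conjecture}, not a theorem; the paper offers no proof and explicitly leaves it open. So there is nothing to compare your proposal against---the question is simply whether your argument would settle the conjecture.

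It would not, and the gap is exactly where you suspect. Your central step is the claim that whenever $T_N$ has trivial linear part, its translational part $v$ lies in the tiling lattice $\Lambda$. The intuition you offer---that reflections across two parallel tiling lines contribute a translation in $\Lambda$---is already false for a generic scalene triangle tiling: such a tiling is \emph{not} reflection-symmetric across its own edges (the paper uses that symmetry only for the special tilings of Theorem~\ref{mf}), so the group generated by reflections in the tiling lines does not preserve the tiling, and its translation subgroup need not sit inside $\Lambda$. Concretely, reflecting across two consecutive lines in one family yields twice the perpendicular spacing vector, which for a thin scalene triangle is not a lattice vector. You would therefore need to exploit the particular combinatorics of the edge sequence actually visited by a billiard trajectory, not just the abstract reflection group, and nothing in your outline does this.

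There is a second issue. Your argument only treats the case in which the direction of the trajectory eventually returns exactly, i.e.\ the linear part of some $T_N$ is the identity. For a scalene triangle with angles that are irrationally related to $\pi$, the linear parts $R_A,R_B,R_C$ generate an infinite group, and there is no a priori reason a generic trajectory ever produces a word equal to the identity in that group. A trajectory whose direction drifts monotonically (as in the two-line case of Theorem~\ref{ek}(a)) could still ``spiral'' in the informal sense used here, and your scheme says nothing about that regime. So even granting the lattice claim, the argument would not be complete.
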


\begin{conjecture}\label{trinodense}
Trajectories on triangle tilings never fill a region of the plane densely.
\end{conjecture}

\section{The Trihexagonal Tiling} \label{kelsey}

The equilateral triangle tiling and the regular hexagon tiling have very simple dynamics (Theorem \ref{mf}), but the composition of these two tilings into the trihexagonal tiling offers a myriad of interesting dynamics, which we explore in this section. As mentioned in the Introduction, trajectories in the trihexagonal tiling are very unstable; a tiny change in the trajectory direction can transform a periodic trajectory into a trajectory that escapes, or into one that ``fills'' a region of the plane (in the sense of filling in all the pixels on the screen to some level of resolution in our program). In this sense, the trihexagonal tiling has similar behavior to inner billiards on the square table, where a tiny change in the direction of a trajectory, from a rational to an irrational slope, transforms a periodic trajectory into a dense trajectory.

Our goal was to find periodic trajectories and drift-periodic trajectories on the trihexagonal tiling. The dynamics of this tiling turn out to be complicated and interesting, so we analyze the local behavior of trajectories around single vertices or tiles (Lemmas \ref{turner}-\ref{pent_lemma}), and then give explicit examples of periodic trajectories on this tiling (Examples \ref{6theorem}, \ref{12periodic}, and \ref{gooseheadtheorem}). We also construct two families of drift-periodic trajectories (Propositions \ref{drift120} and \ref{turning_drift}), and for one of these families, we show that the limiting trajectory fills infinite regions of the plane with parallel segments that are arbitrarily close together (Theorem \ref{dense}).

\begin{definition}
The \emph{trihexagonal tiling} is the edge-to-edge tiling where an equilateral triangle and a regular hexagon meet at each edge. Examples showing large areas of this tiling are in Figures \ref{bigperiods1}-\ref{bigperiods2}. We assume that each of the edges in the tiling has unit length. 

We assume that a trajectory $\tau$ starts on an edge of the tiling, at a point $p_1$, and its subsequent intersections with edges are $p_2, p_3$, etc. The distance $x_i$ from $p_i$ to a vertex is not clearly defined; it could be $x_i$ or $1-x_i$. It is most convenient for us to use triangles, so we choose to define $x_i$ as follows: Every edge of the trihexagonal tiling is between an equilateral triangle and a hexagon. In the equilateral triangle, $\tau$ goes from $p_i$ on edge $e_i$ to $p_{i+1}$ on an adjacent edge $e_{i+1}$, thus creating a triangle whose edges are formed by $e_i, e_{i+1}$ and $\tau$. Let $x_i$ be the length of the triangle's edge along $e_i$ and let $x_{i+1}$ be the length of the triangle's edge along $e_{i+1}$ (see Figures \ref{turningpicture}-\ref{quadtriangle}).

Define the angle $\alpha_i$ to be the angle between $\tau$ and an edge of the tiling at $p_i$, where $\alpha_i$ is between the trajectory and the edge with length $x_i$ as defined above. The \emph{initial angle} of a trajectory is the angle $\alpha_1$; since we frequently use this angle, we denote it by $\alpha$. 

\end{definition}

\subsection{Local geometric behavior in the trihexagonal tiling}

First, we state four lemmas in elementary geometry about local trajectory behavior in the tiling. In the next section, we use these lemmas to prove periodicity of several periodic and drift-periodic trajectories in the tiling.

\begin{lemma}[The Trajectory Turner]
\label{turner}
Consider a trajectory crossing the edges of the tiling at ${p_1},{p_2},{p_3},{p_4}$ where segments $\overline{{p_1}{p_2}}$ and $\overline{{p_3}{p_4}}$ lie in distinct triangles, and segment $\overline{{p_2}{p_3}}$ lies in the hexagon adjacent to both triangles. 
Then ${x_1} = {x_4}$. Also, $\alpha_4=\pi-\alpha$.
\end{lemma}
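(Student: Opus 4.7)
The plan is to identify the vertex $v$ of the hexagon around which the trajectory ``turns,'' and then to chase angles and apply the Law of Sines in three sub-triangles sharing $v$. For the trajectory to pass from $T_1$ into $H$ through $e_2$, bend around a vertex of $H$, and leave through $e_3$ into $T_2$, the edges $e_2$ and $e_3$ must be two adjacent edges of $H$ meeting at a common vertex $v$; furthermore $v$ is then the shared vertex of $e_1,e_2$ inside $T_1$ and of $e_3,e_4$ inside $T_2$, so that the three sub-triangles $p_1p_2v$, $p_2p_3v$, and $p_3p_4v$ are exactly the ones used to define the $x_i$ and $\alpha_i$.

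Next I would sum angles in each sub-triangle. In the equilateral tile $T_1$, the interior angle at $v$ is $\pi/3$, so $\alpha_1+\alpha_2=2\pi/3$; the same argument in $T_2$ gives $\alpha_3+\alpha_4=2\pi/3$. In $H$, the interior angle at $v$ is $2\pi/3$, so $\alpha_2+\alpha_3=\pi/3$. Adding the first and third and subtracting the middle eliminates $\alpha_2$ and $\alpha_3$ and yields $\alpha_1+\alpha_4=\pi$, i.e.\ $\alpha_4=\pi-\alpha$.

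For $x_1=x_4$ I would apply the Law of Sines in each sub-triangle, obtaining $x_1\sin\alpha_1=x_2\sin\alpha_2$ from $p_1p_2v$, $x_2\sin\alpha_2=x_3\sin\alpha_3$ from $p_2p_3v$, and $x_3\sin\alpha_3=x_4\sin\alpha_4$ from $p_3p_4v$. Chaining these identities and using $\sin\alpha_4=\sin(\pi-\alpha_1)=\sin\alpha_1$ gives $x_1\sin\alpha_1=x_4\sin\alpha_1$, hence $x_1=x_4$.

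The main obstacle is the angle-convention bookkeeping: one must check that the $\alpha_i$ defined in the paper (the angle between the trajectory and $e_i$ on the side of the edge along which $x_i$ is measured) really equals the interior angle at $p_i$ of the relevant sub-triangle. The subtle checks are at $p_2$ and $p_3$, where the trajectory bends; one must verify that the refraction rule preserves the angle the ray makes with the edge on the $v$-side, so that the triangle-side and hexagon-side interior angles at the same crossing agree. Once this is settled, the proof collapses to the three-line angle chase and the Law-of-Sines chain above.
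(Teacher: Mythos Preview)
Your argument is correct, but it takes a genuinely different route from the paper's proof. The paper proves this lemma by \emph{folding}: reflect $T_1$ across $e_2$ and $T_2$ across $e_3$ into the hexagon. Because the refraction rule is reflection across the edge, the three trajectory segments fold up into one straight line, and the folded images of $e_1$ and $e_4$ land on the same segment (the bisector of the hexagon's angle at $v$); then $x_1=x_4$ and $\alpha_1+\alpha_4=\pi$ are read off directly from the picture. Your approach instead stays in the unfolded tiling and runs an angle chase plus a Law-of-Sines chain through the three sub-triangles $p_1p_2v$, $p_2p_3v$, $p_3p_4v$. The folding argument is the billiard-theoretic one---it is exactly the dual of ``unfolding'' in inner billiards and makes the result visually immediate---while yours is purely computational and would transplant more readily to configurations where a nice fold is unavailable.

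Two small remarks on your write-up. First, your sentence ``adding the first and third and subtracting the middle'' is mis-indexed relative to the order in which you listed the equations; what you actually need is $(\alpha_1+\alpha_2)+(\alpha_3+\alpha_4)-(\alpha_2+\alpha_3)=2\pi/3+2\pi/3-\pi/3=\pi$, i.e.\ the two triangle equations minus the hexagon equation. Second, your claim that ``$v$ is then the shared vertex of $e_1,e_2$ inside $T_1$ and of $e_3,e_4$ inside $T_2$'' is not a consequence of the bare hypotheses but is part of the intended configuration (visible in the paper's figure); once that is assumed, your identification of each $\alpha_i$ with the interior angle of the corresponding sub-triangle at $p_i$ is exactly right, since the refraction rule makes the angle on the $v$-side the same whether measured from the triangle segment or the hexagon segment.
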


\begin{proof} 
We include a proof of this elementary result because we can use a folding argument (see Theorem \ref{mf} in the Introduction) even though the tiling is \emph{not} reflection-symmetric across the edges of the tiling.

In Figure \ref{turningpicture}, we fold the triangles onto the hexagon, and find that lengths $x_1$ and $x_4$ measure distances that fold up to the same segment, so they are equal. 
The angles at $p_1$ and $p_4$ fold up to make a straight line, so $\alpha_4 = \pi-\alpha_1$.
\end{proof}
%
\begin{figure}[h!]
\centering
\includegraphics[width=380pt]{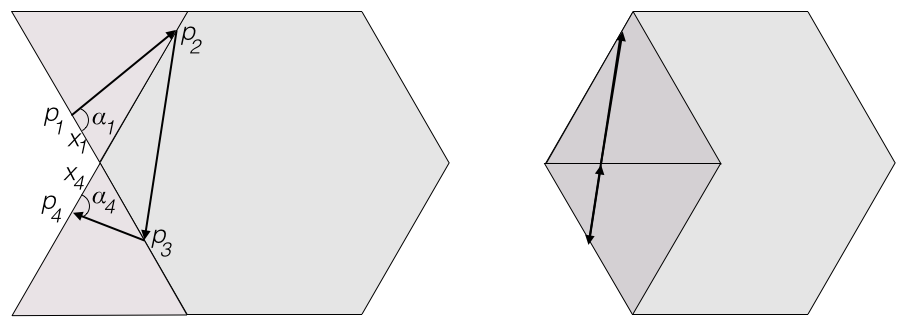}
\caption{The Trajectory Turner, proven via a folding argument}
\label{turningpicture}
\end{figure}
%
%

\begin{lemma}[The Quadrilateral]
\label{quadrilateral}
In a regular hexagon $ABCDEF$, suppose a trajectory passes from side $AB$ to side $CD$, crossing $AB$ at ${p_1}$ and $CD$ at ${p_2}$. 
If the angle at ${p_1}$, measured on the same side of the trajectory as $B$, is $\pi-\alpha$,  then \mbox{${x_2}$ = $\frac{\sin(\alpha)(2{x_1}+1) + \sqrt{3}\cos(\alpha)}{2\sin(\alpha - \frac{\pi}{3})}$}. Also, $\alpha_2=\alpha-\pi/3$.
\end{lemma}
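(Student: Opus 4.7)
The plan is to apply the Law of Sines in the auxiliary triangle $Vp_1p_2$, where $V$ is the intersection of the lines containing $AB$ and $CD$. Because the interior angles of the hexagon at $B$ and $C$ are both $2\pi/3$, the exterior angles there are $\pi/3$, so $V$ lies at distance $1$ past $B$ on line $AB$ and at distance $1$ past $C$ on line $DC$. In particular triangle $VBC$ is equilateral of side $1$, so $|VB|=|VC|=1$ and the angle at $V$ is $\pi/3$.

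Next I identify the angles of $Vp_1p_2$. The angle at $V$ is $\pi/3$; the angle at $p_1$, measured toward $V$ (the $B$-side), is $\pi-\alpha$ by hypothesis; hence the angle at $p_2$, measured toward $V$ (the $C$-side), is $\alpha - \pi/3$. Tracking which edge of the triangular tile immediately before $p_1$, and which edge of the triangular tile immediately after $p_2$, the trajectory actually crosses shows that in each case the shared vertex with the hexagon edge is the one closer to $V$. So by the convention defining these lengths, $x_1 = |p_1B|$ and $x_2 = |p_2C|$, and $\alpha_2$ is exactly the angle at $p_2$ just computed: $\alpha_2 = \alpha - \pi/3$.

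Since $V$ lies beyond $B$ from $p_1$, $|Vp_1| = |VB| + |Bp_1| = 1 + x_1$. The Law of Sines in $Vp_1p_2$ then gives
\[
|Vp_2| \;=\; (1+x_1)\cdot\frac{\sin(\pi-\alpha)}{\sin(\alpha - \pi/3)} \;=\; \frac{(1+x_1)\sin\alpha}{\sin(\alpha-\pi/3)}.
\]
Since $C$ lies between $V$ and $p_2$, $x_2 = |Vp_2| - |VC| = |Vp_2|-1$. Using $\sin(\alpha-\pi/3) = \tfrac12(\sin\alpha - \sqrt{3}\cos\alpha)$ in the numerator of $|Vp_2|-1$ then yields
\[
x_2 \;=\; \frac{\sin\alpha(1+x_1) - \sin(\alpha-\pi/3)}{\sin(\alpha-\pi/3)} \;=\; \frac{\sin\alpha(2x_1+1) + \sqrt{3}\cos\alpha}{2\sin(\alpha - \pi/3)},
\]
as claimed.

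The main obstacle is the orientation bookkeeping in the second paragraph: confirming that $x_1$ is measured from $B$ (rather than $A$) and $x_2$ from $C$ (rather than $D$), and that the $\alpha_2$ defined by the convention really is the angle computed in triangle $Vp_1p_2$. Once that orientation check is settled, the remainder of the proof is a single application of the Law of Sines plus one routine trigonometric identity.
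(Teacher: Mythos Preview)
Your proof is correct. The paper actually does not supply a proof of this lemma at all: it simply states the four ``elementary geometry'' lemmas (Trajectory Turner, Quadrilateral, Quadrilateral--Triangle, Pentagon) and accompanies this one only with a figure. Your Law of Sines argument in the auxiliary triangle $Vp_1p_2$ is exactly the sort of computation the authors invoke elsewhere (e.g., in Theorem~\ref{ek}(b) and Theorem~\ref{per10}), so your approach is entirely in the spirit of the paper even though there is no written proof to compare against. The orientation bookkeeping you flag is indeed the only subtle point, and the paper's definition of $x_i$ via the adjacent equilateral triangle, together with the figure, pins it down the way you describe; once that is settled, the trigonometric verification you give is routine and matches the stated formula exactly.
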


\begin{figure}[h!]
\center
\includegraphics[width=200pt]{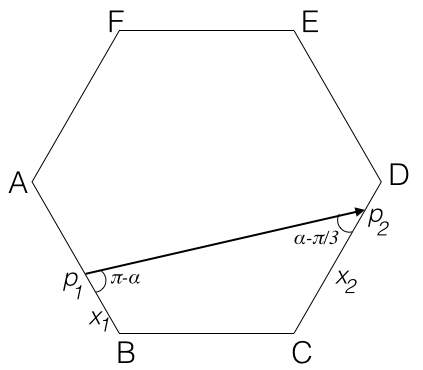}
\caption{The Quadrilateral}
\label{quadzoom}
\end{figure}

%
%
%
%
%
%
%
%

\begin{lemma}[The Quadrilateral-Triangle]
\label{quad_triangle}
Let ABCDEF, ${p_1}$, ${p_2}$ and $x_1$ be as in The Quadrilateral. Consider equilateral triangle CDT, and let ${p_3}$ be the point where the trajectory meets CT (Figure \ref{quadtriangle} (a)). 

Then ${x_3} = \frac{1}{2} + {x_1} + \frac{\sqrt{3}}{2}\cot\alpha$. Also, $\alpha_3=\pi-\alpha$.
\end{lemma}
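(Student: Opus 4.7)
The plan is to treat Lemma \ref{quadrilateral} as a black box covering the segment of trajectory inside the hexagon, and then to reduce the remaining work to a single Law-of-Sines computation inside the equilateral triangle $CDT$.

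First I would import, without recomputation, the two conclusions of Lemma \ref{quadrilateral}: the angle $\alpha_2 = \alpha - \pi/3$ at $p_2$ (between the trajectory and the segment $p_2C$ along $CD$) and the length $x_2 = \frac{\sin\alpha(2x_1+1) + \sqrt{3}\cos\alpha}{2\sin(\alpha - \pi/3)}$. At $p_2$ the trajectory refracts across $CD$ into the triangle $CDT$; since the refraction rule of tiling billiards is reflection across the boundary, the angle between the refracted ray and the segment $p_2C$, measured now on the triangle side of $CD$, is again $\alpha - \pi/3$.

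Next I would examine the small triangle $\triangle p_2 C p_3$ cut out by the trajectory inside $CDT$. Its three angles are determined immediately: $\pi/3$ at $C$ (an angle of the equilateral tile), $\alpha - \pi/3$ at $p_2$ (by the previous paragraph), and, by the angle sum, $\pi - \alpha$ at $p_3$. The last equation already gives $\alpha_3 = \pi - \alpha$, since by the paper's convention $\alpha_3$ is the angle between the trajectory and the edge segment of length $x_3$, and $x_3$ lies along $p_3C$. For the length, applying the Law of Sines in $\triangle p_2 C p_3$ gives
\[
\frac{x_3}{\sin(\alpha - \pi/3)} \;=\; \frac{x_2}{\sin(\pi-\alpha)} \;=\; \frac{x_2}{\sin\alpha},
\]
and substituting the formula for $x_2$ above causes the factor $\sin(\alpha - \pi/3)$ to cancel, leaving
\[
x_3 \;=\; \frac{\sin\alpha\,(2x_1+1) + \sqrt{3}\cos\alpha}{2\sin\alpha} \;=\; \tfrac{1}{2} + x_1 + \tfrac{\sqrt{3}}{2}\cot\alpha.
\]

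The main obstacle is angle bookkeeping rather than any genuine difficulty: one has to verify that the ``$C$-side'' on which $\alpha_2$ is measured in Lemma \ref{quadrilateral} is indeed the side that feeds into the vertex $C$ of the next equilateral tile $CDT$, and that the edge segment used to define $x_3$ is $p_3C$ rather than $p_3T$. Once the picture is set up consistently, the rest is a one-line application of the Law of Sines.
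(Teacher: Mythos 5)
The paper states this lemma without proof --- of the four local lemmas, only the Trajectory Turner is proved (by folding), and the Quadrilateral, Quadrilateral--Triangle, and Pentagon lemmas are left as elementary geometry --- so there is no in-paper argument to compare against. Your proof is correct and is the natural derivation. The angle bookkeeping checks out: by the paper's convention $\alpha_2$ is measured against the segment $p_2C$ of length $x_2$, reflection across $CD$ preserves that angle, and the hypothesis that $p_3$ lies on $CT$ guarantees the small triangle is $\triangle p_2 C p_3$ with angle $\pi/3$ at $C$, giving $\alpha_3 = \pi - \alpha$ at once. The Law of Sines step is also applied with the correct opposite-side pairing, $x_3/\sin(\alpha-\pi/3) = x_2/\sin(\pi-\alpha)$, and the factor $\sin(\alpha-\pi/3)$ from the Quadrilateral Lemma's formula for $x_2$ cancels exactly, yielding $x_3 = \tfrac12 + x_1 + \tfrac{\sqrt3}{2}\cot\alpha$. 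Treating Lemma \ref{quadrilateral} as a black box is exactly the right economy here; the only content genuinely new to this lemma is the one triangle computation you perform.
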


\begin{lemma}[The Pentagon]
\label{pent_lemma}
In a regular hexagon $ABCDEF$, suppose a trajectory passes from side $AB$ to side $DE$ , crossing $AB$ at ${p_1}$ and $DE$ at ${p_2}$ (Figure \ref{quadtriangle} (b)). If the angle at ${p_1}$, measured on the same side of the trajectory as $B$, is ${\pi - \alpha}$, then ${x_2} = {x_1} + \sqrt{3}\cot\alpha$. Also, $\alpha_2=\alpha$.
\end{lemma}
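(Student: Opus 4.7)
The plan is to exploit the fact that $AB$ and $DE$ are opposite, parallel sides of the regular hexagon $ABCDEF$, separated by perpendicular distance $\sqrt{3}$ when the edge length is $1$. The trajectory from $p_1 \in AB$ to $p_2 \in DE$ is a single straight segment lying entirely inside one hexagonal cell, so no refraction happens in between and the whole lemma reduces to plane geometry in that one hexagon. This is actually simpler than the Quadrilateral or the Quadrilateral--Triangle Lemmas, where the two edges met at a non-straight angle inside the hexagon.

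For the angle claim, since $AB \parallel DE$ and the trajectory is straight, the trajectory cuts the two parallel edges at equal angles (alternate angles made by a transversal of two parallel lines). One then checks that the convention ``angle measured on the $B$-side at $p_1$'' pairs with the corresponding side at $p_2$ in the way dictated by the definition of $\alpha_i$ via the adjacent equilateral triangles, which gives $\alpha_2 = \alpha$. This is the same flavor of observation used in the Trajectory Turner, but there is no folding needed here because the two edges are already parallel.

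For the length formula, I would place Cartesian coordinates with $AB$ on the line $y = \tfrac{\sqrt{3}}{2}$ and $DE$ on the line $y = -\tfrac{\sqrt{3}}{2}$. Write $p_1 = (u_1, \tfrac{\sqrt{3}}{2})$ and $p_2 = (u_2, -\tfrac{\sqrt{3}}{2})$. The angle condition says the segment makes angle $\alpha$ with the horizontal, so the slope condition immediately gives
\[
u_2 - u_1 \;=\; \sqrt{3}\,\cot\alpha.
\]
Translating the $u_i$ back to the $x_i$ requires only identifying which endpoint of each edge is the ``origin'' for measuring $x_i$: the definition of $x_i$ using the adjacent equilateral triangles forces $x_1$ on $AB$ and $x_2$ on $DE$ to be measured from endpoints lying on the same side of the hexagon (the vertices closest to those adjacent triangles). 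With that convention $u_2 - u_1 = x_2 - x_1$, yielding $x_2 = x_1 + \sqrt{3}\cot\alpha$.

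The main obstacle is not computational but conventional: one has to verify that the two equilateral triangles outside the hexagon on either side of the trajectory force $x_1$ and $x_2$ to be measured in the same direction along the parallel edges $AB$ and $DE$ (otherwise a $1 - x$ would appear and the formula would be shifted). Once this bookkeeping is settled, both conclusions of the lemma fall out of a single one-line trigonometric identity.
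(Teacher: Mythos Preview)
The paper does not actually prove this lemma: it is one of the four ``lemmas in elementary geometry'' announced at the start of Section~5.1, and only the first of them (the Trajectory Turner) is given an explicit argument. The Quadrilateral, the Quadrilateral--Triangle, and the Pentagon are simply stated with their accompanying figures and then used downstream.

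Your proposal is a correct and natural way to supply the missing argument. The key geometric facts you isolate---that $AB$ and $DE$ are opposite parallel sides of the unit hexagon at perpendicular distance $\sqrt{3}$, so the angle claim is a transversal-of-parallels statement and the length claim is the single relation (horizontal displacement) $=\sqrt{3}\cot\alpha$---are exactly what is needed. One small point: you say ``the segment makes angle $\alpha$ with the horizontal,'' but the hypothesis gives angle $\pi-\alpha$ on the $B$-side; it is the supplementary angle on the $A$-side that equals $\alpha$, and it is worth saying explicitly which of these you are using when you read off the slope, since the sign of $\cot\alpha$ (and hence whether $x_2>x_1$ or $x_2<x_1$) depends on whether $\alpha$ lies above or below $\pi/2$. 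You already flag the endpoint-convention bookkeeping as the only genuine content to verify, and that is accurate.
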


%
%
%
%
%
%
%

\begin{figure}[h!]
\center
\includegraphics[height=140pt]{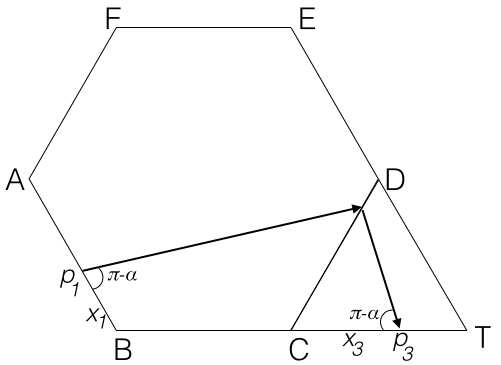} \ \ \ \ \  \ \ \ \ \ \  \ 
\includegraphics[height=140pt]{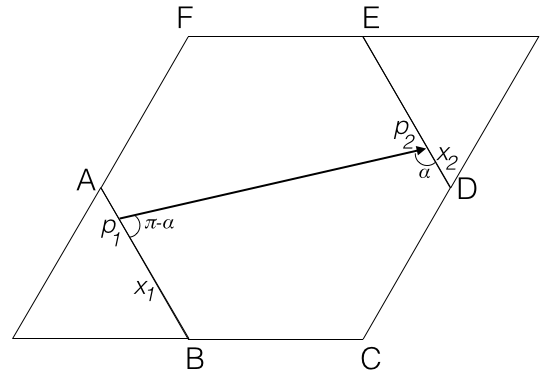}
\caption{(a) The Quadrilateral-Triangle (b) The Pentagon}
\label{quadtriangle}
\end{figure}

%
%

\subsection{Periodic and drift-periodic trajectories in the trihexagonal tiling}

Using our results about local behavior in the trihexagonal tiling, we will now prove the existence of specific periodic and drift-periodic trajectories. In each case, we first give a simple periodic trajectory (Examples \ref{6theorem} and \ref{12periodic}), and then show how a perturbation of the simple trajectory yields a family of drift-periodic trajectories with arbitrarily large period (Propositions \ref{drift120} and \ref{turning_drift}).

\begin{example}
\label{6theorem}
There is a 6-periodic trajectory intersecting all edges of the tiling at angle $\frac{\pi}{3}$ (Figure \ref{period6}). This trajectory circles three lines forming a regular triangle, so by Corollary \ref{ngoncorollary}, an initial angle of $\frac{3-1}{2\cdot 3}\pi = \frac{\pi}{3}$ makes the trajectory periodic. 

This trajectory is stable under parallel translations for any $0 < {x_1} < 1$; see the dashed trajectory in Figure \ref{period6}.  
\end{example}

\begin{figure}[h!]
\center
\includegraphics[height=140pt]{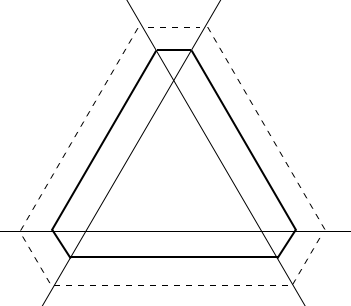} \ \ \ \ \ \ \ \ \ \ 
\includegraphics[height=140pt]{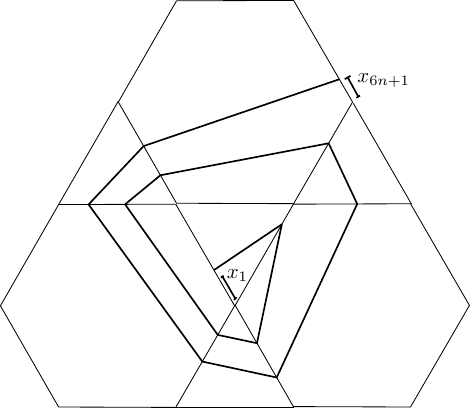}
\caption{(a) The period-6 trajectory of Example \ref{6theorem}, (b) One period of the drift-periodic trajectory in Proposition \ref{drift120} where $n = 2$}
\label{period6}\label{sides}
\end{figure}

If we perturb the periodic trajectory in Figure \ref{period6} (a), we obtain a family of drift-periodic trajectories. One period  of such a trajectory is in Figure \ref{sides} (b). We describe this family in Proposition \ref{drift120}.

\begin{proposition}
\label{drift120}
For any $n \in N$, there is a drift-periodic trajectory of period $6n$ that intersects each edge of the tiling a maximum of $n$ times. 
\end{proposition}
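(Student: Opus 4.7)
The plan is to generalize Example \ref{6theorem} by perturbing the initial angle so that the trajectory, instead of closing into a period-$6$ loop around three lines forming an equilateral triangle, makes $n$ consecutive near-loops that collectively close up under a single lattice translation. The period-$6$ trajectory is stable under parallel translation and crosses each of its three lines twice; the drift-periodic trajectory I seek will instead revisit a family of parallel edges $n$ times before the translational closure occurs.

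Concretely, I would fix a starting point $p_1$ at distance $x_1$ from a vertex on an edge of the tiling and choose an initial angle $\alpha_n = \pi/3 + \epsilon_n$ with $\epsilon_n$ small, to be determined. To evolve the trajectory I would alternate between the Trajectory Turner (Lemma \ref{turner}) at each triangle-hexagon-triangle segment, which preserves $x$ and sends $\alpha \mapsto \pi - \alpha$, and the Quadrilateral (Lemma \ref{quadrilateral}) at each hexagon passage between non-adjacent edges, which advances $x$ by an explicit function of $\alpha$ and sends $\alpha \mapsto \alpha - \pi/3$. Composing these over six consecutive crossings yields a one-loop return map $\Phi_\alpha\colon(x,\alpha)\mapsto(x+\Delta(\alpha),\alpha)$, where $\Delta(\pi/3)=0$ by the closure of Example \ref{6theorem}.

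Next, I would solve the equation $n\,\Delta(\alpha_n) = 1$ (in units of the edge length) for the perturbation $\epsilon_n$. Continuity of $\Delta$ together with its first-order nonvanishing at $\pi/3$ guarantees a unique small solution for each $n$, and $\epsilon_n \to 0$ as $n \to \infty$. With this choice of $\alpha_n$, the $n$-fold composition $\Phi_{\alpha_n}^n$ is a pure translation by one lattice vector of the tiling, so the trajectory is drift-periodic with period exactly $6n$. The bound on edge crossings would then follow by direct inspection: within one period the trajectory is confined to a strip of hexagons and triangles parallel to the translation direction, and each edge in this strip is hit once per loop, giving at most $n$ crossings per edge.

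The main obstacle will be rigorously identifying the combinatorial sequence of faces visited by the trajectory and confirming that the pattern of Turner-versus-Quadrilateral applications remains consistent as the initial angle is perturbed through the range $\{\alpha_n : n\geq 2\}$. In particular, one must rule out degeneracies such as the trajectory striking a vertex, and verify that no crossing type not covered by Lemmas \ref{turner}-\ref{quadrilateral} occurs. I would establish this combinatorial structure by induction on $n$ using the $n=2$ configuration in Figure \ref{sides}(b) as the base case; each increment of $n$ corresponds to inserting one additional near-loop into the pattern. Once this is settled, the drift-periodicity itself is a telescoping calculation using only the $(x,\alpha)$ updates from the two lemmas.
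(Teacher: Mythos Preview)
Your picture of the trajectory as $n$ near-copies of the period-$6$ loop is not what actually happens, and this misidentification propagates through the rest of the argument. When you perturb the angle away from that of Example~\ref{6theorem}, the trajectory does not wind $n$ times around the same triangle with a slow drift; after a single turn it runs off along a row of hexagons and triangles. Concretely, one period consists of one Trajectory Turner (Lemma~\ref{turner}, crossings $p_1$ through $p_4$), then $3n-2$ consecutive applications of the Quadrilateral-Triangle (Lemma~\ref{quad_triangle}, each covering two crossings along the row), and finally one Pentagon crossing (Lemma~\ref{pent_lemma}), for a total of $3+2(3n-2)+1=6n$ crossings. The closure equation $x_{6n+1}=x_1$ then forces the explicit initial angle $\alpha=\pi-\tan^{-1}\!\bigl(\tfrac{3n\sqrt{3}}{3n-2}\bigr)$, and the bound of $n$ crossings per edge is read off from the long run of Quadrilateral-Triangle steps along a single line of edges.

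Your proposed six-step return map $\Phi_\alpha$ also cannot be assembled from the lemmas you cite. The Quadrilateral (Lemma~\ref{quadrilateral}) sends $\alpha\mapsto\alpha-\pi/3$ and the Turner sends $\alpha\mapsto\pi-\alpha$; there is no way to combine one Turner (three crossings) with Quadrilaterals (one crossing each) to reach six crossings and return to the original $\alpha$, so the claim that $\Phi_\alpha$ fixes $\alpha$ is unsupported. The lemma you actually need is Lemma~\ref{quad_triangle}, which treats a hexagon-plus-triangle pair, sends $\alpha\mapsto\pi-\alpha$, and shifts $x$ by $\tfrac{1}{2}+\tfrac{\sqrt{3}}{2}\cot\alpha$; the paper's computation is a direct telescoping of these shifts against the single Pentagon shift $\sqrt{3}\cot\alpha$, not an $n$-fold iterate of a near-identity loop map. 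Redo the edge-sequence analysis from the $n=2$ picture in Figure~\ref{sides}(b) rather than arguing by perturbation.
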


\begin{proof}

Let \mbox{$\alpha = \pi - \tan^{-1}\left(\frac{3n\sqrt{3}}{3n-2}\right)$}. (This angle is less than, and approaches, $2\pi/3$.) Then $$\mbox{$\cot\alpha = \frac{-(3n-2)}{3n\sqrt{3}}=\frac{-\frac 12 (3n-2)}{(3n-2)\frac{\sqrt{3}}2+\sqrt{3}}.$}$$

We wish to show that $x_{6n+1}=x_1$ and $\alpha_{6n+1}=\alpha$.

By the Pentagon Lemma, $x_{6n+1}=x_{6n}+\sqrt{3}\cot(\pi-\alpha_{6n})$ and $\alpha_{6n+1}=\pi-\alpha_{6n}$.

By the Quadrilateral-Triangle Lemma, $x_{2k+2}=x_{2k}+\frac 12 + \frac{\sqrt{3}}2 \cot (\pi-\alpha_{2k})$  and $\alpha_{2k+2}=\alpha_{2k}$ for $k\geq 3$. We apply this result $3n-2$ times to $x_{6n}$ and $\alpha_{6n}$ to yield $\alpha_{6n+1}=\pi-\alpha_4$ and
\begin{align*}
x_{6n+1} &=x_{6n}+\sqrt{3}\cot(\pi-\alpha_{6n}) \\
&= x_4+(3n-2)\left(\frac 12 + \frac{\sqrt{3}}2 \cot (\pi-\alpha_4) \right)+\sqrt{3}\cot(\pi-\alpha_4).
\end{align*}
By the Trajectory Turner Lemma, $x_4=x_1$ and $\alpha_4=\pi-\alpha_1=\pi-\alpha$, so $\alpha_{6n+1}=\alpha$ and
\begin{align*}
x_{6n+1} &= x_1+(3n-2)\left(\frac 12 + \frac{\sqrt{3}}2 \cot \alpha \right)+\sqrt{3}\cot \alpha \\
&= x_1+\frac 12 (3n-2) + \cot\alpha  \left( (3n-2) \frac {\sqrt{3}}2 + \sqrt{3}\right) = x_1,
\end{align*}
by substituting the expression for $\cot\alpha$.

\end{proof}


\begin{remark}
We can take the limit of the initial angle $\alpha$ as $n\to\infty$: \[\lim_{n \to +\infty} \alpha = \lim_{n \to +\infty} \pi - \tan^{-1}\left(\frac{3n\sqrt{3}}{3n-2}\right) = \frac{2\pi}{3}.\] This implies that our drift-periodic trajectory is converging to the periodic trajectory in Example \ref{6theorem}. Our angle $\alpha$ is bounded below by $\alpha = \pi - \tan^{-1}(3\sqrt{3})\approx 0.56\pi$ and above by $\alpha = \frac{2}{3}\pi$. 
\end{remark}

\begin{example}
\label{12periodic}
There is a 12-periodic trajectory that intersects the edges of the tiling at  angles $\frac{\pi}{2}$ and $\frac{\pi}{6}$ (Figure \ref{period12} (a)).

\begin{figure}[h!]
\center
\includegraphics[width=190pt]{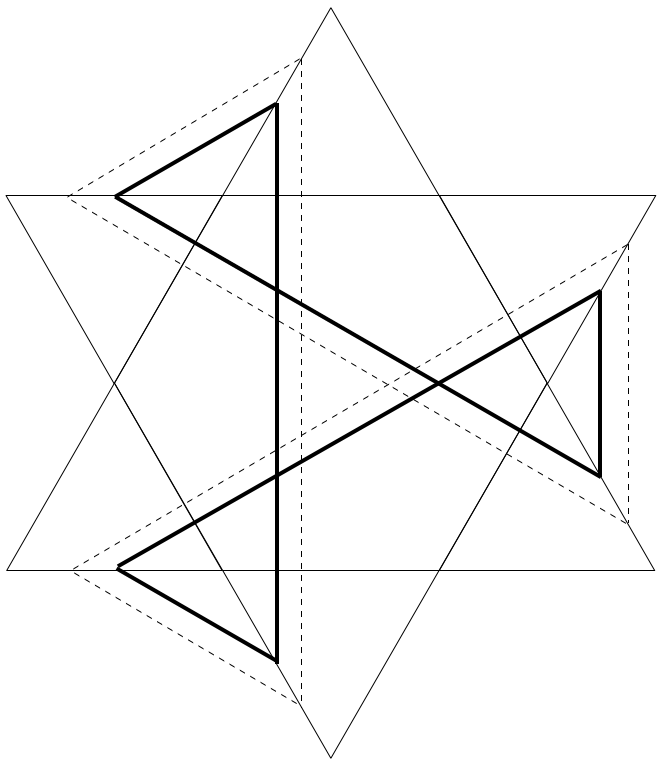}  \ \ \ \ \ 
\includegraphics[scale = 0.73]{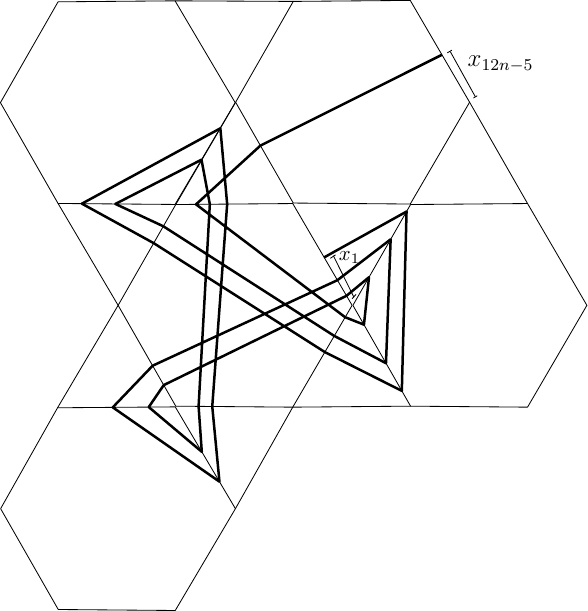}
\caption{(a) Two examples of the period-12 trajectory of Example \ref{12periodic}, one thick and one dashed (b) One period of the associated drift-periodic trajectory, where $n = 3$}
\label{period12}\label{drift_tri}
\end{figure}

%
%

This trajectory is stable under parallel translations:  As long as the trajectory intersects the edge of the central hexagon perpendicularly, any value of ${x_1}$, with $0 < {x_1} < \frac{1}{2}$, produces a parallel periodic trajectory. An example is shown as a dashed trajectory in Figure \ref{period12} (a).
\end{example}

\begin{proposition}
\label{turning_drift}
For any $n \geq 2$, there is a drift-periodic trajectory with period $12{n} - 6$ that intersects each edge of a tiling a maximum of $n$ times (Figure \ref{driftconverge}). 
\end{proposition}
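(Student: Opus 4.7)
The plan is to follow the template of the proof of Proposition \ref{drift120}: perturb the initial angle $\alpha$ of the $12$-periodic trajectory of Example \ref{12periodic} by an amount depending on $n$, then trace the perturbed trajectory through $12n-6$ refractions using the four local lemmas and verify that it returns to a corresponding edge at the same angle and at a corresponding position, translated by a fixed vector.

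First I would choose an explicit initial angle $\alpha = \alpha(n)$, most naturally of the form $\alpha = \tan^{-1}(g(n))$ for some rational function $g(n)$ with $\alpha(n)\to \pi/2$ as $n\to\infty$. This matches the perpendicular crossings of the $12$-periodic trajectory, so that the drift-periodic family limits in direction onto the periodic trajectory of Example \ref{12periodic}, exactly as the family in Proposition \ref{drift120} limits onto the periodic trajectory of Example \ref{6theorem}.

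Next I would enumerate the $12n-6$ refractions. Each local lemma advances the edge-crossing index by a fixed amount: $1$ for the Pentagon and Quadrilateral Lemmas, $2$ for the Quadrilateral-Triangle Lemma, and $3$ for the Trajectory Turner Lemma. Since $12n-6=6(2n-1)$, I would look for a six-fold near-symmetric pattern in which each ``arm'' of the trajectory contributes $2n-1$ refractions, built from a fixed transition (using the Turner or Quadrilateral-Triangle Lemma) together with $n-1$ Pentagon-Lemma segments that elongate the arm. Incrementing $n$ then inserts one additional Pentagon-Lemma unit into each arm, adding exactly $12$ refractions to the total and intersecting each edge at most one additional time, consistent with the ``at most $n$ times per edge'' condition in the statement. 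Summing the contributions to $x_i$ from each lemma application, substituting the explicit form of $\cot\alpha(n)$, and telescoping (as in the proof of Proposition \ref{drift120}), I would verify that both $\alpha_{12n-5}=\alpha$ and $x_{12n-5}=x_1$ modulo the drift translation.

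The main obstacle is pinpointing the exact combinatorial sequence of lemma-applications and checking that it extends cleanly with $n$. I would fix the base pattern from the $n=2$ case (period $18$) against Figure \ref{driftconverge}(a), confirm the extension rule from the $n=3$ case (period $30$) against Figure \ref{drift_tri}(b), and then formalize the general pattern by induction on $n$, each inductive step appending a Pentagon-Lemma unit to each arm. The explicit form of $\alpha(n)$ is then determined by solving the closing equation for $\cot\alpha$; as in Proposition \ref{drift120}, this reduces to a single linear equation in $\cot\alpha$ with rational coefficients depending on $n$.
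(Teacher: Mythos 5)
Your overall strategy is the same as the paper's: choose $\alpha(n)=\tan^{-1}(g(n))$ tending to $\pi/2$ (the paper takes $g(n)=(6n-3)\sqrt{3}$), march through the refractions with the local lemmas, telescope the increments to the $x_i$, and solve a single linear closing equation for $\cot\alpha$. The substantive problem is your guessed combinatorial skeleton, which does not match what the trajectory actually does. The correct decomposition is not six arms of $2n-1$ refractions each: it is $n-1$ repetitions of a $12$-step block, each block consisting of three passes of (Trajectory Turner, advancing the crossing index by $3$) followed by (Pentagon, advancing by $1$), with each block translating the crossing point along an edge by $3\sqrt{3}\cot\alpha$; this is capped by a single closing segment of $6$ steps (Turner, then Quadrilateral-Triangle, then Pentagon), giving $12(n-1)+6=12n-6$. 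In particular, incrementing $n$ inserts one whole $12$-block, not ``one Pentagon unit into each of six arms'': six arms each gaining one Pentagon application would add only $6$ refractions, not $12$, so your extension rule is arithmetically inconsistent as stated. Since essentially the entire content of the proof is this bookkeeping plus the resulting value of $\cot\alpha$, deferring the pattern to ``fix it from the figures'' leaves the main work undone.

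A smaller omission: you never address the choice of the starting point $p_1$. The paper shows the $n$ crossings of a given edge are spaced $1/(2n-1)$ apart, so they span $\frac{n-1}{2n-1}<\frac{1}{2}$ of the edge; this is what guarantees a valid $x_1$ exists so that the trajectory avoids all vertices and meets each edge at most $n$ times. Without some such estimate, the ``maximum of $n$ times'' clause of the statement, and indeed the well-definedness of the trajectory, is unjustified.
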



\begin{proof}

Given $n$, choose $\alpha = {\tan}^{-1}((6n-3)\sqrt{3})$ as the initial angle of the trajectory. (This angle is less than, and approaches, $\pi/2$.)
In order to avoid hitting a vertex, we need to carefully choose ${p_1}$. The following calculations show that given any $n$, there is an $x_1>0$ so that ${p_1}$ can be placed a distance $x_1$ from the midpoint and create the drift-periodic trajectory. 

By the Pentagon Lemma, we calculate $x_{13} = x_1 + 3\sqrt{3}\cot\alpha$. This gives $|p_{1}p_{13}|=3\sqrt{3}\cot\alpha$, and in general, since $\alpha = {\tan}^{-1}((6n-3)\sqrt{3})$, we have
\begin{equation}\label{intersections}
|p_{12k+i}p_{12(k+1)+i}|=3\sqrt{3}\cot\alpha=\frac{3\sqrt{3}}{(6n-3)\sqrt{3}}=\frac 1{2n-1}.
\end{equation}
If there are $n$ intersections to an edge, there are $n-1$ of these distances, so the total distance is 
$\frac{n-1}{2n-1}$, 
which is less than $1/2$, 
as desired. 

Now that we have the initial angle and initial starting point of the trajectory, we will show that the trajectory is drift-periodic,
by showing that $x_1={x_{12n-5}}$ (Figure \ref{drift_tri} (b)).

By the Trajectory Turner Lemma, $$\alpha_{12k+4}=\pi-\alpha_{12k+1}, \ \ \  \alpha_{12k+8}=\pi-\alpha_{12k+5}, \text{\ \ and \ \  } \alpha_{12k+12}=\pi-\alpha_{12k+9}.$$ 

By the Pentagon Lemma, $$\alpha_{12k+5}=\pi-\alpha_{12k+4}, \ \ \  \alpha_{12k+9}=\pi-\alpha_{12k+8}, \text{\ \  and \ \ }  \alpha_{12k+13}=\pi-\alpha_{12k+12}.$$ 
Also, we know $\alpha_1=\alpha$. Combining these yields $$\alpha_{12k+1}=\alpha_{12k+5}=\alpha_{12k+9}=\alpha, \text{\ \  and \ \ } \alpha_{12k+4}=\alpha_{12k+8}=\alpha_{12k}=\pi-\alpha.$$
Also by the Trajectory Turner Lemma, $$x_{12k+1}=x_{12k+4}, \ \ \  x_{12k+5}=x_{12k+8} , \text{\  \ and \ \  } x_{12k+9}=x_{12k+12}$$ for all $k$. 

By the Pentagon Lemma, 
\begin{align*}
x_{12k+5} &=x_{12k+4}+\sqrt{3}\cot(\pi-\alpha_{12k+4}) = x_{12k+4}+\sqrt{3}\cot(\alpha); \\
x_{12k+9} &=x_{12k+8}+\sqrt{3}\cot(\pi-\alpha_{12k+8}) = x_{12k+8}+\sqrt{3}\cot(\alpha); \\
x_{12k+13} &=x_{12k+12}+\sqrt{3}\cot(\pi-\alpha_{12k+12}) = x_{12k+12}+\sqrt{3}\cot(\alpha).
\end{align*}
Combining these yields $x_{12k+13}=x_{12k+1}+3\sqrt{3}\cot\alpha$, so the distance between $p_{12k+13}$ and $p_{12k+1}$ is $3\sqrt{3}\cot\alpha$. There are $n-1$ of these gaps, so $x_{12n-11}=x_1+(n-1)3\sqrt{3}\cot\alpha$.

Now we apply the Trajectory Turner Lemma one more time: $$x_{12n-8}=x_{12n-11} , \text{\ \ and \ \ } \alpha_{12n-8}=\pi-\alpha_{12n-11}.$$
Now we apply the Quadrilateral Triangle Lemma and substitute in the relations from above: 
\begin{align*}
x_{12n-6} &=x_{12n-8}+\frac 12 + \frac {\sqrt{3}}2 \cot(\pi-\alpha_{12n-8}) \\
&=x_{12n-11} + \frac 12 + \frac {\sqrt{3}}2 \cot(\alpha_{12n-11}) \\
&=x_1+(n-1)3\sqrt{3}\cot\alpha + \frac 12 + \frac {\sqrt{3}}2 \cot\alpha.
\end{align*}
Finally, we apply the Pentagon Lemma and then substitute in the relations from above: 
\begin{align*}
x_{12n-5} &=x_{12n-6}+\sqrt{3}\cot(\pi-\alpha_{12n-6}) \\
&=x_{12n-6}+\sqrt{3}\cot\alpha \\
&=x_1+(n-1)3\sqrt{3}\cot\alpha + \frac 12 + \frac {\sqrt{3}}2 \cot\alpha+\sqrt{3}\cot\alpha \\
&=x_1+ \frac 12 +\sqrt{3}\left(3n-3+1-\frac 12\right)\cot\alpha \\
&=x_1+ \frac 12 +\frac{\sqrt{3}\left(3n-3+1-1/2\right)}{-\sqrt{3}(6n-3)} = x_1+\frac 12 - \frac 12 = x_1,
\end{align*}
 as desired.

%
%
%
%

%
%
%
%
%
\end{proof}

\begin{theorem}\label{dense}
The trihexagonal tiling exhibits trajectories that fill infinite regions of the plane with line segments that are arbitrarily close together.
\end{theorem}

\begin{proof}
By (\ref{intersections}), the intersections of the $n^{\text{th}}$ member of the family of drift-periodic trajectories described in Proposition \ref{turning_drift} with a selected edge are a distance $\frac 1 {2n-1}$ apart. Thus, we can construct a trajectory where the intersections are arbitrarily close together, and thus the line segments on the neighboring tiles, and then on infinitely many tiles, are also arbitrarily close.
\end{proof}

The word ``trajectories'' is plural in Theorem \ref{dense} because we expect that a similar analysis on other families of drift-periodic trajectories would yield the same result; however, the required calculations are onerous and we have not undertaken them.

\begin{figure}[h!]
\centering
	\includegraphics[scale=0.3]{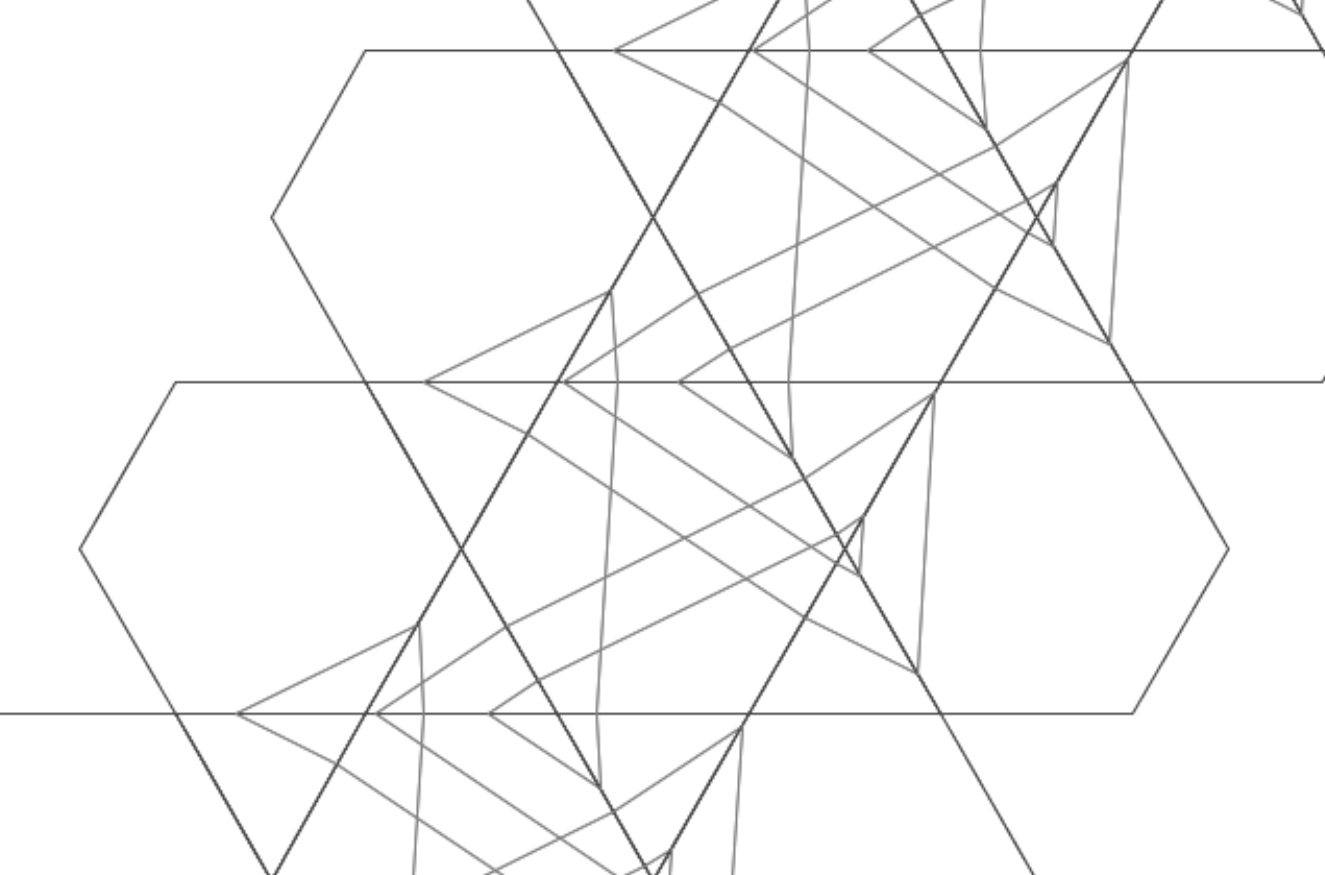}
	\includegraphics[scale = 0.3]{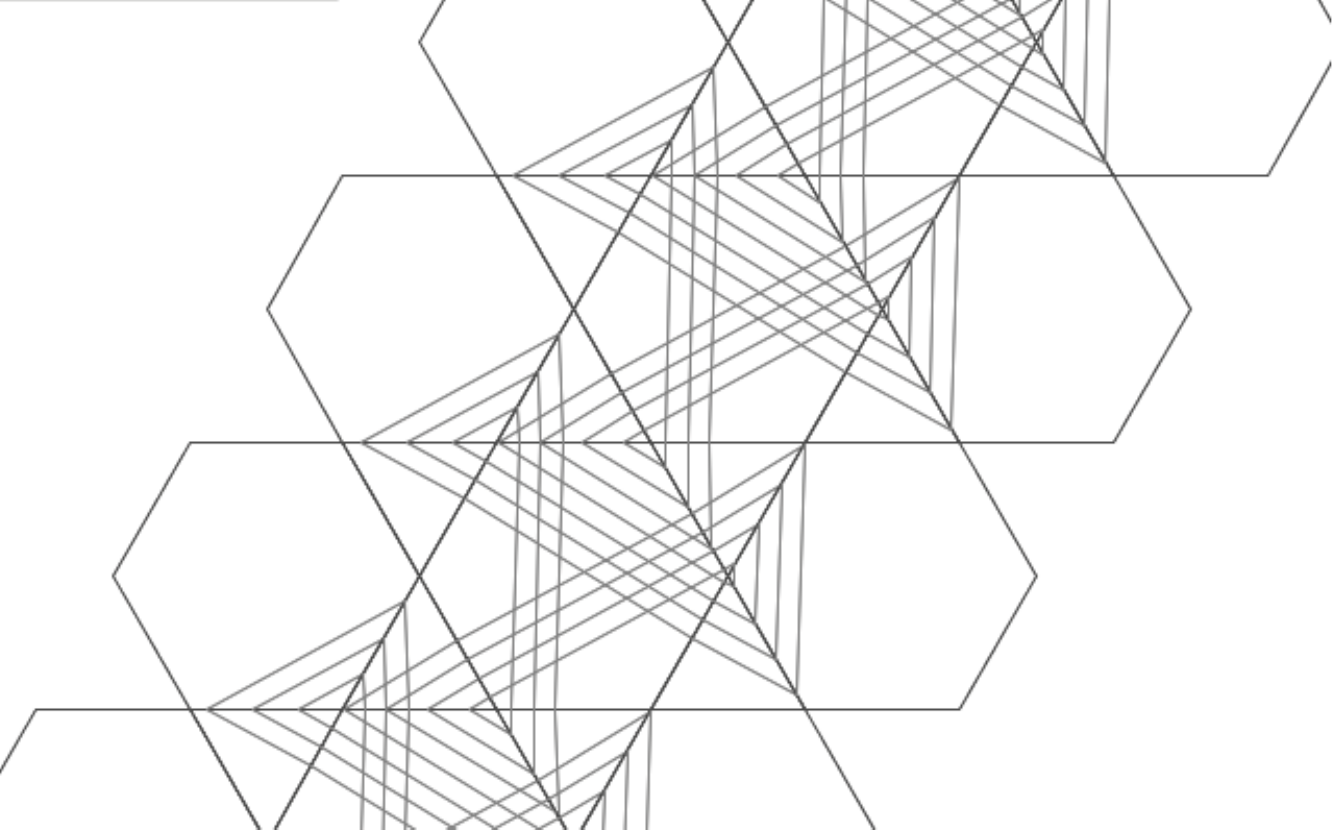}
\caption{A drift-periodic trajectory with period (a) 18 and (b) 42, which are the $n=2$ and $n=4$ cases, respectively, of Proposition \ref{turning_drift}.}
\label{driftconverge}
\end{figure}

Figure \ref{driftconverge} shows two examples of the family of trajectories in Proposition \ref{turning_drift}; one can extrapolate what the limiting trajectory would look like. For some hexagons in the tiling, this limiting trajectory fills one part with arbitrarily close segments but does not visit another part of the hexagon at all. This is analogous to a billiard table in which a trajectory fills some region of the table densely and does not visit another part of the table at all, see \cite{mcmullen}.

\begin{conjecture} \label{denseexists}
There are trajectories that are dense in a region of the trihexagonal tiling.
\end{conjecture}

The periodic trajectories are perhaps the most aesthetically pleasing, and the rationality constraints of our computer program encourage us to explore periodic and drift-periodic trajectories. Still, we believe that the type of drift-periodic families discussed here are not the only escaping trajectories on this tiling:

\begin{conjecture} \label{nonperesc}
There are non-periodic escaping trajectories.
\end{conjecture}

\begin{remark}
As suggested by Figure \ref{driftconverge}, the drift-periodic trajectories of Proposition \ref{turning_drift} converge to the period-12 trajectory of Example \ref{12periodic} as $n \to \infty$. This shows that even though we can construct a trajectory whose segments are arbitrarily closely packed, in the limit this construction produces not a dense trajectory, but one that is periodic.

Also see the related periodic trajectories in Figure \ref{bigperiods1}. Considering that the period-12 trajectory in Figure \ref{period12} yields the nearby drift-periodic trajectories in Figure \ref{driftconverge}, it is possible that a perturbation of these larger trajectories and others like them   will yield nearby drift-periodic trajectories.
\end{remark}

\begin{figure}[h!]
	\includegraphics[height=200pt]{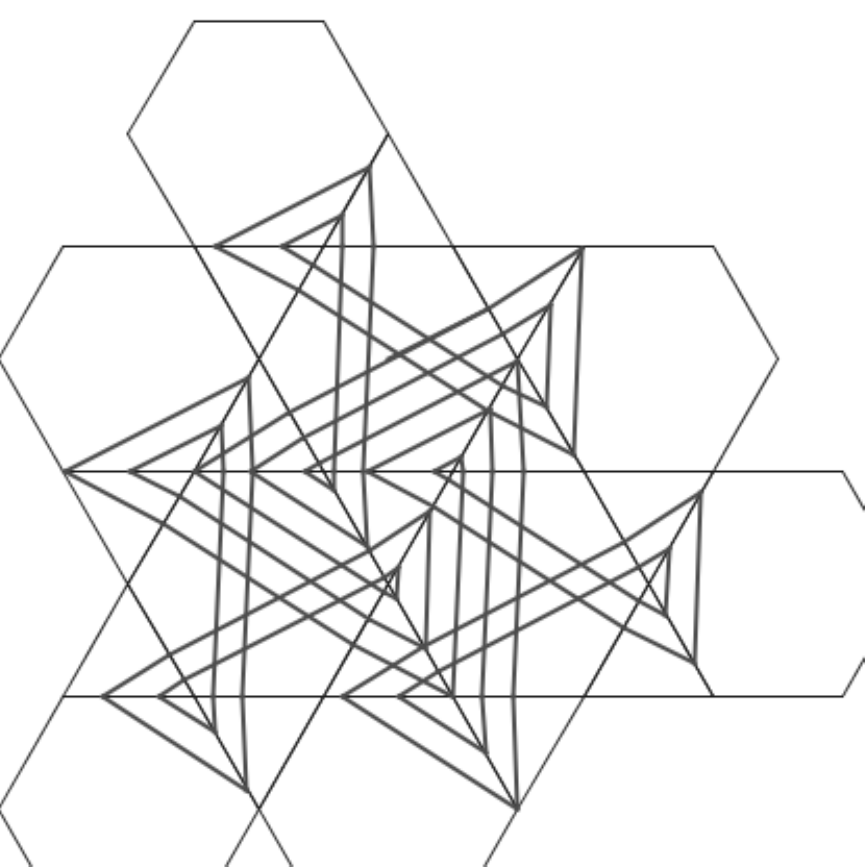}
	\includegraphics[height=200pt]{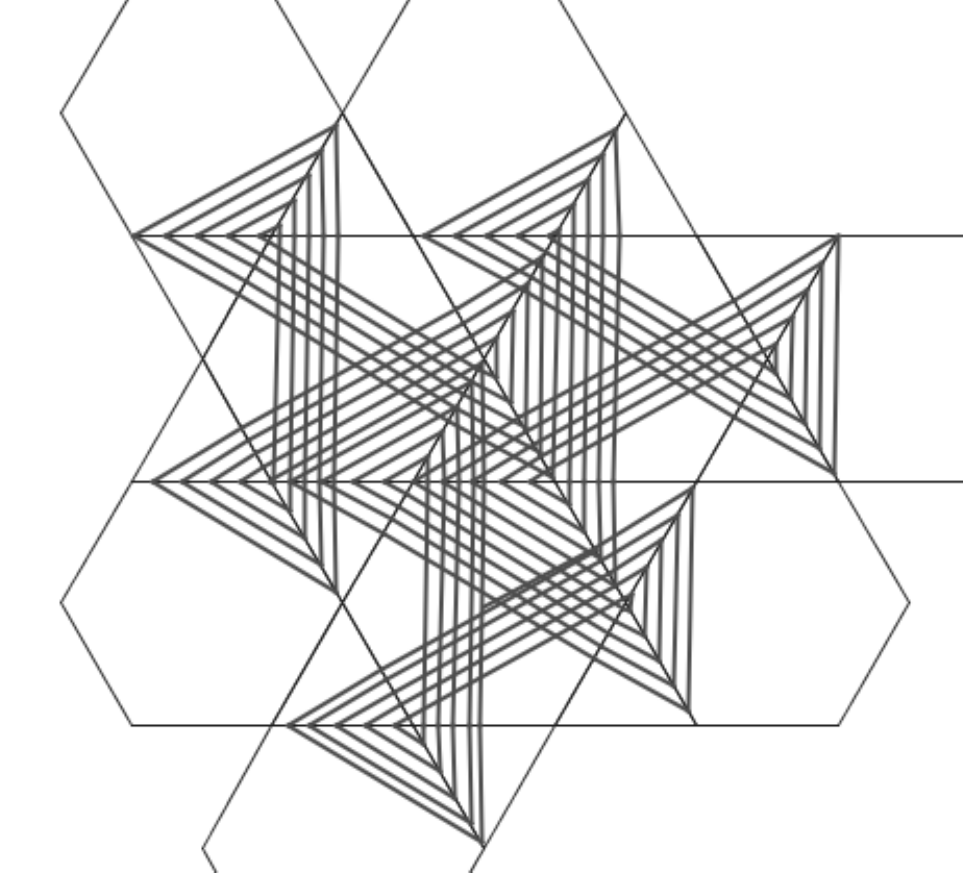}
	\caption{Trajectories with periods of 78 and 174}
	\label{bigperiods1}
\end{figure}

\begin{example}
\label{gooseheadtheorem}
There is a trajectory of period $24$, with an initial angle of $\alpha = \pi - \tan^{-1}(2\sqrt{3})$. 
\end{example} 

\begin{figure}[h!]
\center
\includegraphics[trim={0 0 0 1cm},clip=true,scale=0.75]{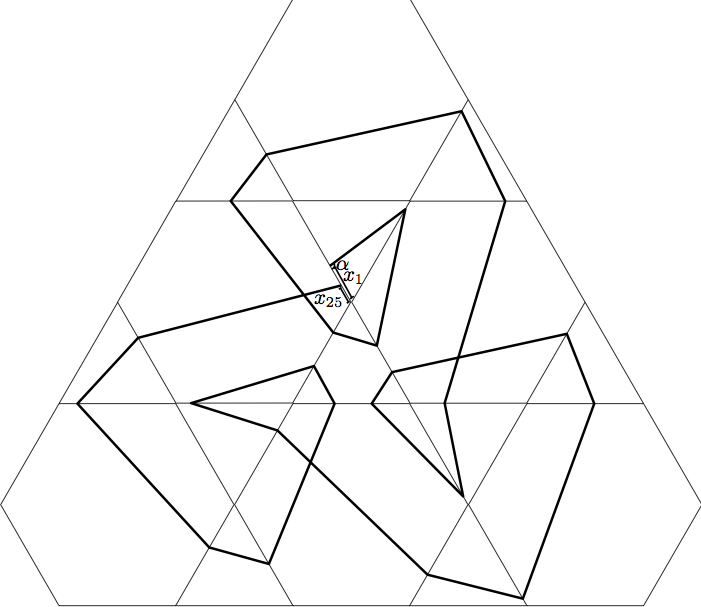}
\caption{The trajectory of period $24$ in Example \ref{gooseheadtheorem}}
\label{goosehead_pic}
\end{figure}

\begin{proof}
Define ${x_1}$ and $x_{25}$ as in Figure \ref{goosehead_pic}. 

%
%

By the Trajectory Turner Lemma, 
$x_{i+3}=x_i$ for $i=1,9,17$. This does not change the value of the distance $x_i$. 

By the Quadrilateral Triangle Lemma, $x_{i+2}=x_i+\frac 12 + \frac{\sqrt{3}}2 \cot\alpha$ for $i=4, 6,12, 14, 20, 22$. This results in adding $\frac 12 +\frac{\sqrt{3}}2 \cot\alpha$ to the distance $x_i$ a total of $6$ times, for a total increase of $3+3\sqrt{3}\cot\alpha$.

By the Pentagon Lemma, $x_{i+1}=x_i+\sqrt{3}\cot\alpha$ for $i=8, 16, 24$.  This results in adding $\sqrt{3}\cot\alpha$ to the distance $x_i$ a total of $3$ times, for a total increase of $3\sqrt{3}\cot\alpha$. 

Thus $x_{25}=x_1+3+6\sqrt{3}\cot\alpha$. The substitution $\cot\alpha = \frac{-1}{2\sqrt{3}}$ yields $x_{25}=x_1$, as desired.

\end{proof}

Also see the larger periodic trajectories in Figure \ref{bigperiods2} that resemble the trajectory in Figure \ref{goosehead_pic}.

\begin{figure}[h!]
	\includegraphics[trim={0 1cm 0 0.3cm},clip=true,height=200pt]{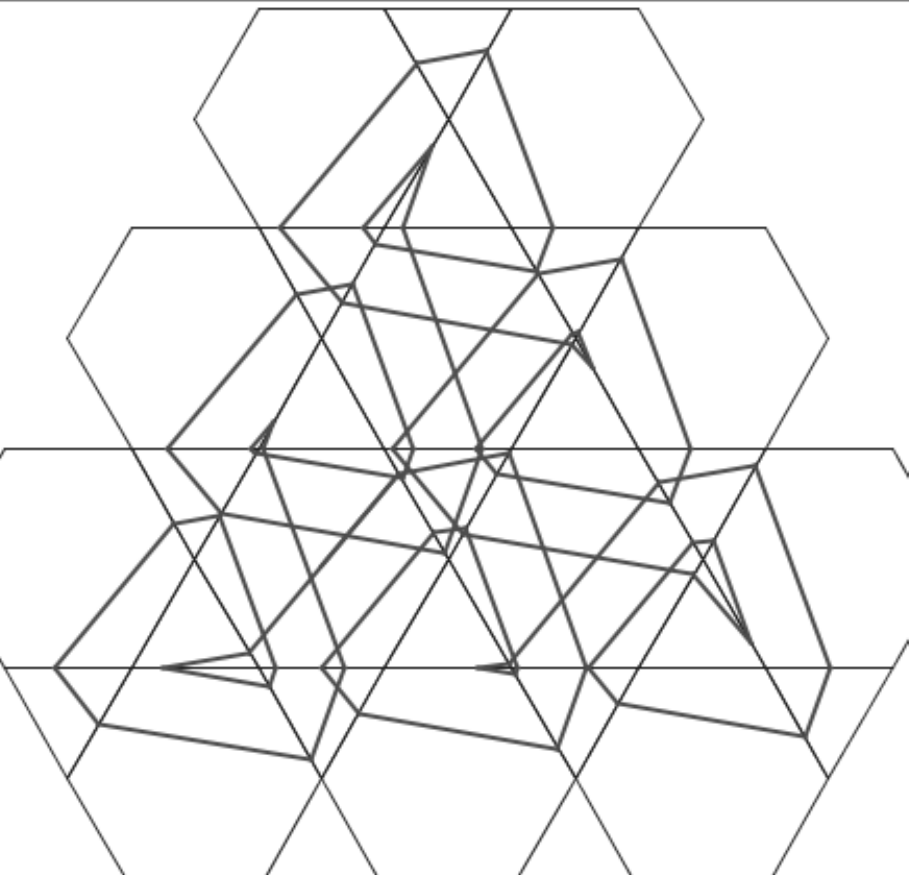}
	\includegraphics[height=200pt]{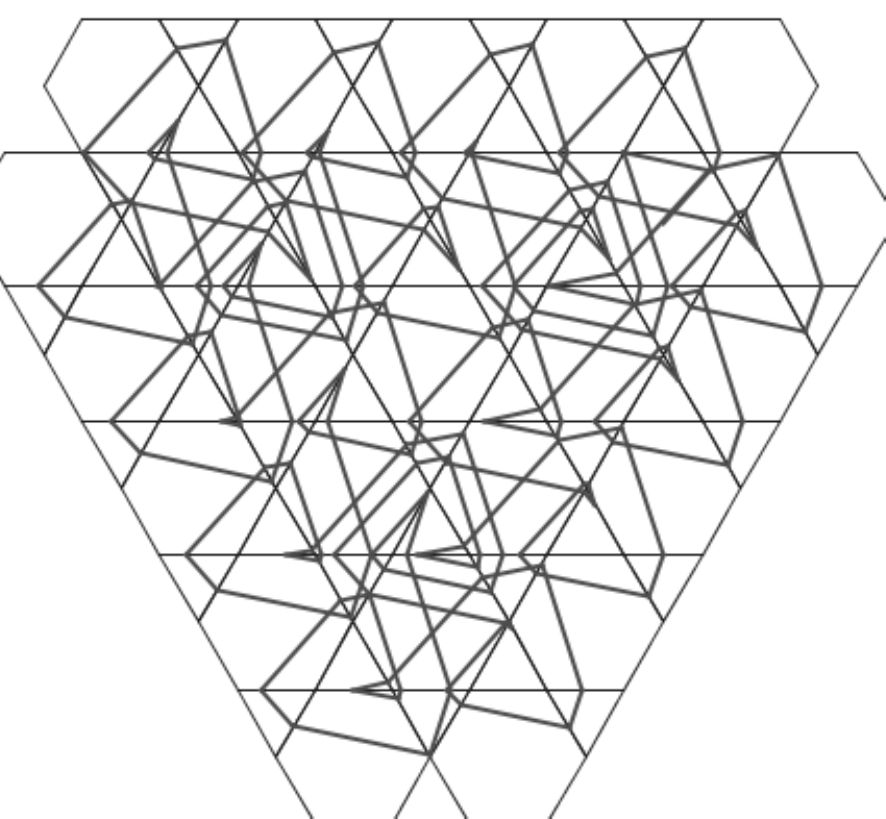}
	\caption{Trajectories with periods of 66 and 192 that resemble the trajectory in Figure \ref{goosehead_pic}}
	\label{bigperiods2}
\end{figure}

\newpage
\section{Future directions}

{\bf Finite line arrangements:} In this paper, we only studied divisions of the plane by finitely many non-parallel lines. One could relax these conditions, and study divisions of the plane with infinitely many lines, or include parallel lines. The unique feature of a division of the plane by lines is that there are regions of infinite area; one could study other tilings that have this property, such as a division of the plane by the curves $y=\sin(x)+k$ for $k\in\mathbf{Z}$, or related piecewise-linear curves.

{\bf Triangle tilings:} We have many conjectures about triangle tilings based on experimental results; these are described in Section \ref{jenny}. We  showed that all triangle tilings have a trajectory of period $6$ (Corollary \ref{all-triangles}) and that nearly all have a trajectory of period $10$ (Theorem \ref{per10}). Conjecture \ref{fournplustwo} says that all periodic trajectories on triangle tilings are of the form $4n+2$; we would like to know, for a given $n$, which triangle tilings have a trajectory of period $4n+2$. 

Our results on these triangle tilings are simple in the cases where the system is very stable. In contrast, the trihexagonal system is very unstable. We would like to understand what feature of a tiling causes this stability or instability. The triangle and trihexagonal tilings both have half-turn symmetry at each vertex, and in both cases the lines of the tiling are \emph{not} lines of reflective symmetry for the tiling, so this is not the explanation. In tilings of triangles where some lines \emph{are} lines of reflective symmetry for the tiling, such as isosceles triangle tilings and the regular tilings in Figure \ref{regtilings}, the behavior of trajectories is very predictable. 

Because the congruent triangle tilings we consider are those created by adding parallel diagonals to a parallelogram tiling, a next step would be to investigate this system on the parallelogram tiling itself. Another possibility is to consider edge-to-edge triangle tilings meeting $6$ to a vertex, where the orientation of every other row of triangles is opposite. Still another direction is to consider tilings of congruent triangles that are not edge-to-edge; the congruent triangle tilings that we consider are a two-parameter family, and allowing this offset adds an additional parameter. Each of these tilings is pictured in Figure \ref{triangle-tilings}.

\begin{figure}[h!]
\centering
\includegraphics[width=300pt]{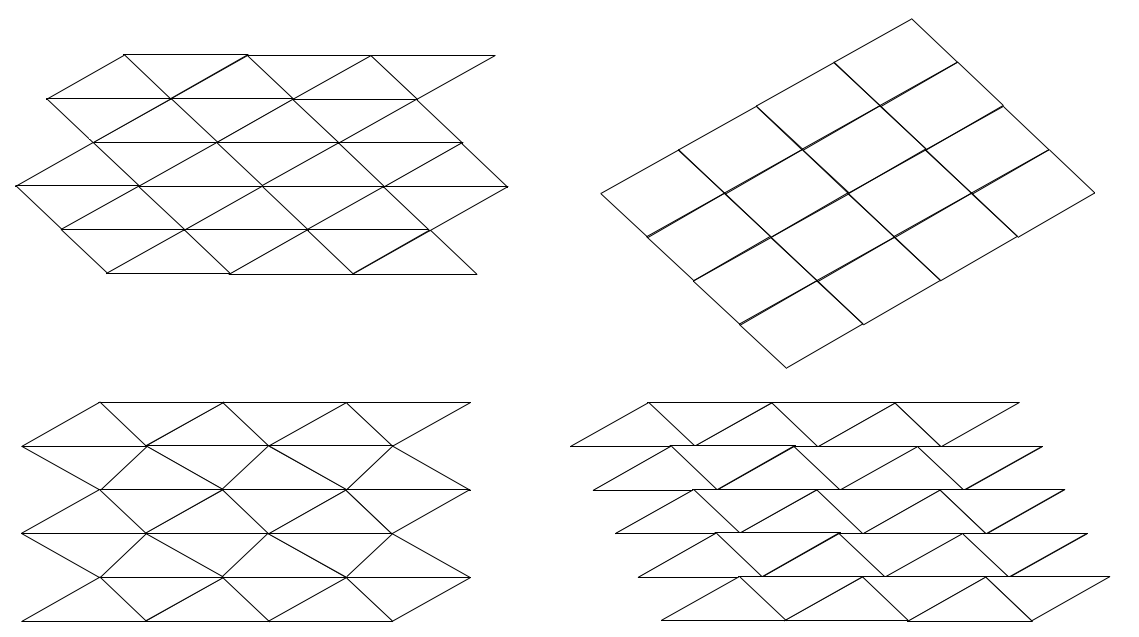}
\caption{A triangle tiling of the kind we study, and three related tilings}
\label{triangle-tilings}
\end{figure}

%
%

{\bf The trihexagonal tiling:} This tiling exhibits a surprising level of instability. We were unable to find a single stable trajectory in the tiling; during experimentation, even a small change in the direction or starting location of a trajectory would produce wildly different behavior. This is in marked contrast to some triangle tilings, where even a large change in direction or starting location had almost no effect on the behavior. We would like to understand why the trihexagonal tiling is so unstable. We would also like to show that dense trajectories exist. Examples of periodic trajectories that we have found are in Figures \ref{bigperiods1}-\ref{bigperiods2}, and we conjecture that there are periodic trajectories of arbitrarily high period.

{\bf Further extensions:} 
Our work considered only simple tilings of the plane. It would be interesting to consider some more complex tilings, such as the Penrose tiling or random tilings. 

Our work also considered only polygonal tilings; perhaps tilings or divisions of the plane by curves would yield interesting dynamics. Inner billiards on curved tables, such as ellipses, yields beautiful mathematics, see \cite{sergei}, so tiling billiards with curves are another possible direction.


Additionally, since the motivation for our work is refraction of light through solids, an obvious next step would be to study this problem in three dimensions, since any real-world application to creating perfect lenses or invisibility shields would likely require solid materials.

\newpage
\subsection{Acknowledgements}

This research was conducted during the Summer@ICERM program in 2013, where the first author was a teaching assistant and the second, third and fourth authors were undergraduate researchers. We are grateful to ICERM for excellent working conditions and the inspiring atmosphere. We thank faculty advisors Sergei Tabachnikov and Chaim Goodman-Strauss for their guidance. We also thank Pat Hooper for sharing his Java code, which we used to model this system.

The two reviewers made suggestions that substantially improved this paper. We especially thank one of them for suggesting the elegant method of proof in Section \ref{alex} using compositions of reflections, which replaced our previous methods that used trigonometry and arithmetic.


\vspace{1in}

\noindent {\bf Diana Davis}, Mathematics Department, Northwestern University, 2033 Sheridan Road, Evanston IL 60208, \href{mailto:diana@math.northwestern.edu}{diana@math.northwestern.edu} \\

\noindent {\bf Kelsey DiPietro}, Department of Applied and Computational Mathematics and Statistics, University of Notre Dame, 153 Hurley Hall, Notre Dame, IN 46556,  \href{mailto:kdipiet1@nd.edu}{kdipiet1@nd.edu} \\

\noindent {\bf Jenny Rustad}, Department of Mathematics, University of Maryland, 4176 Campus Drive, College Park, MD 20742, \href{mailto:jrustad1@math.umd.edu}{jrustad1@math.umd.edu} \\

\noindent {\bf Alexander St Laurent}, Department of Mathematics and Department of Computer Science, Brown University, 151 Thayer Street, Providence, RI 02912, 
\href{mailto:alexander_st_laurent@brown.edu}{alexander\_st\_laurent@brown.edu}

\end{document}